\newcommand{\tsum}{\textstyle{\sum}}
\newcommand{\beq}{\begin{equation}}
\newcommand{\eeq}{\end{equation}}
\newcommand{\nn}{\nonumber}
\newcommand{\bbe}{\mathbb{E}}
\newcommand{\bbr}{\mathbb{R}}
\def\la{\langle}
\def\ra{\rangle}
\def\1b{\mathbf{1}}
\def\0b{\mathbf{0}}
\def\Lb{\mathbf{L}}
\def\zb{\mathbf{z}}
\def\xb{\mathbf{x}}
\def\yb{\mathbf{y}}
\def\ub{\mathbf{u}}
\def\Gc{\mathcal{G}}
\def\Ec{\mathcal{E}}
\def\Vc{\mathcal{N}}  
\def\vb{\mathbf{s}}   
\def\o{\omega}
\def\vgap{\vspace*{.1in}}
\DeclareMathOperator*{\argmin}{arg\,min}
\def\inner{{\rm ACS}}
\def\algone{{\rm ADPD}}
\def\algtwo{{\rm AA-SDCS}}
\newcommand{\varstackrel}[3][T]{\stackrel{\raisebox{0.5ex}{\clap{\scriptsize#2}}}{#3}}
\newtheorem{theorem}{Theorem}
\newtheorem{lemma}[theorem]{Lemma}
\newtheorem{proposition}[theorem]{Proposition}
\newtheorem{assumption}{Assumption}
\newtheorem{definition}{Definition}
\title{Asynchronous decentralized accelerated stochastic gradient descent}
\date{}
\author{
  Guanghui Lan\and 
  Yi Zhou\thanks{Department of Industrial and Systems Engineering, Georgia Institute of Technology, Atlanta, GA, 30332. 
  (email:{\tt george.lan@isye.gatech.edu, yizhou@gatech.edu})}
}
\begin{document}

\maketitle

\begin{abstract}
  In this work, we introduce an asynchronous decentralized accelerated stochastic gradient descent type of method for decentralized stochastic optimization, considering communication and synchronization are the major bottlenecks. 
  We establish $\mathcal{O}(1/\epsilon)$ (resp., $\mathcal{O}(1/\sqrt{\epsilon})$) communication complexity and $\mathcal{O}(1/\epsilon^2)$ (resp., $\mathcal{O}(1/\epsilon)$) sampling complexity for solving general convex (resp., strongly convex) problems.
\end{abstract}

\section{Introduction}
In this paper, we consider the following decentralized optimization problem which is cooperatively solved by $m$ agents distributed over the network:
\begin{align}\label{eqn:orgprob}
f^*:=\min_x &~f(x) := \tsum_{i=1}^m f_i(x)\\
\text{s.t. }&~ x \in X, \quad X := \cap_{i=1}^m X_i \nonumber.
\end{align}
Here $f_i:X_i \to \mathbb{R}$ is a general convex objective function only known to agent $i$ and satisfying
\begin{align}\label{eqn:nonsmooth}
\tfrac{\mu}{2}\|x-y\|^2\le 
f_i(x)-f_i(y)-\langle f_i'(y), x-y\rangle \le \tfrac{L}{2}\|x-y\|^2 + M\|x-y\|, \ \ \forall x, y\in X_i,
\end{align}
for some $L, M,\mu\ge 0$ and $f_i'(y) \in \partial f_i(y)$, where $\partial f_i(y)$ denotes the subdifferential of $f_i$ at $y$, and $X_i \subseteq \mathbb{R}^d$ is a closed convex constraint set of agent $i$. 
\eqref{eqn:nonsmooth} is a unified way of describing a wide range of problems. In particular, if $f_i$ is a general Lipschitz continuous function with constant $M_f$, then \eqref{eqn:nonsmooth} holds with $L = 0, \mu = 0$ and $M = 2M_f$. If $f_i$ is a smooth and strongly convex function in $\mathcal{C}^{1,1}_{L/\mu}$ (see \cite[Section 1.2.2]{Nest04} for definition), \eqref{eqn:nonsmooth} is satisfied with $M = 0$. Clearly, relation \eqref{eqn:nonsmooth} also holds if
$f_i$ is given as the summation of smooth and nonsmooth convex functions. Throughout the paper, we assume the feasible set $X$ is nonempty.

Decentralized optimization problems defined over complex multi-agent networks are ubiquitous in
signal processing, machine learning, control, and other areas in science and engineering
(see e.g. \cite{rabbat,con01,ram_info,Durham-Bullo}). 
One critical issue existing in decentralized optimization is that synchrony among network agents is usually inefficient or impractical due to processing and communication delays and the absence of a master server in the network.
Note that $f_i$ and $X_i$ are private and only known to agent $i$,
and all agents intend to cooperatively minimize the system objective $f$ as the sum of all local objective $f_i$'s in the absence of full knowledge about the global problem and network structure. Decentralized algorithms, therefore, require agents to communicate with their neighboring agents iteratively to propagate the distributed information in the network. Under the synchronous setting, all agents must wait for the slowest agent and/or slowest communication channel/edge in the network, and a global coordinator must be presented for synchronization, which can be extremely expensive in the large-scale decentralized network. 

Following the seminal work \cite{Bertsekas:1997}, extensive research work has been conducted in recent years to design asynchronous algorithmic schemes for decentralized optimization.
Asynchronous gossip-based method under the edge-based random activation setting has been proposed by \cite{Boyd:2006} to solve averaging consensus problems. Later \cite{SL12c} extended this framework for solving \eqref{eqn:orgprob} and established almost surely convergence to the optimal solution when $f_i$ is smooth and convex. Most recently, \cite{xu2018convergence} also achieved almost surely convergence by iteratively activating a subset of agents.  
Besides (sub)gradient based methods, another well-known approach relies on solving the saddle point formulation of \eqref{eqn:orgprob} (see Section~\ref{sec:formulation} for the reformulation), where at each iteration a pair of primal and dual variables is updated alternatively.
The distributed ADMM (e.g., \cite{Bianchi-asyn,Wei-admm,zhang2015bi,bianchi2016coordinate}) has been studied in different asynchronous setting. 
More specifically, \cite{Bianchi-asyn,bianchi2016coordinate} randomly selected and updated a subset of agents iteratively where \cite{Bianchi-asyn} assuming $f_i$ being simple convex function and \cite{bianchi2016coordinate} establishing almost surely convergence for smooth convex objectives.
\cite{Wei-admm} employed the node-based random activation and achieved the ${\cal O}(1/\epsilon)$ rate of convergence
when $f_i$ is a simple convex function, and \cite{zhang2015bi} later established the same rate of convergence by activating one agent per iteration.     
Most recently, \cite{wu2016decentralized} proposed an asynchronous parallel primal-dual type method and established almost surely convergence when $f_i$ is smooth and convex.

Asynchronous decentralized algorithms discussed above require the knowledge of exact (sub)gradients (or function values) of $f_i$, however, this requirement is not realistic when dealing with minimization of generalized risk and online (streaming) data distributed over a network.
There exists limited research on asynchronous decentralized stochastic optimization (e.g., \cite{Nedic11,KS2011,Chang-Stochastic}), for which only noisy gradient information of functions $f_i$, $i = 1,\ldots,m$, can be easily computed. While asynchronous decentralized stochastic first-order methods \cite{Nedic11,KS2011} established error bounds when $f_i$ is (strongly) convex, \cite{Chang-Stochastic} achieved ${\cal O}(1/\epsilon^2)$ rate of convergence for smooth and convex problems. 


Recently \cite{lan2017communication} proposed a class of primal-dual type communication-efficient methods for decentralized stochastic optimization, which obtained the best-known $\mathcal{O}(1/\epsilon)$ (resp., $\mathcal{O}(1/\sqrt{\epsilon})$) communication complexity and the optimal $\mathcal{O}(1/\epsilon^2)$ (resp., $\mathcal{O}(1/\epsilon)$) sampling complexity for solving nonsmooth convex (resp., strongly convex) problems under the synchronous setting.
This class of communication-efficient methods requires two rounds of communication involving all network agents per iteration, and hence may incur huge synchronous delays. 
Moreover, it was proposed to solve decentralized nonsmooth problems so that its convergence property is not clear when applying it to solve decentralized problems satisfying \eqref{eqn:nonsmooth}.
Inspired by \cite{lan2017communication}, we aim to propose an asynchronous decentralized algorithmic framework to solve \eqref{eqn:orgprob} under a more general setting \eqref{eqn:nonsmooth} but still maintains the complexity bounds achieved in \cite{lan2017communication}. 
Our main contributions in this paper can be summarized as follows. 
Firstly, 
we introduce a doubly randomized primal-dual method, namely, asynchronous decentralized primal-dual (\algone) method, which randomly activates two agents per iteration, and hence  
two rounds of communication between the activated agent and its neighboring agents are performed.
This proposed method can find a stochastic $\epsilon$-optimal solution in terms of both the primal optimality gap and feasibility residual in $\mathcal{O}(1/\epsilon)$ communication rounds when the objective functions are simple convex such that the local proximal subproblems can be solved exactly. 

Secondly, we present a new asynchronous stochastic decentralized primal-dual type method, called asynchronous accelerated stochastic decentralized communication sliding (\algtwo) method,
for solving decentralized stochastic optimization problems.
 It should be pointed out that \algtwo~ is a unified algorithm that can be applied to solve a wild range of problems under the general setting of \eqref{eqn:nonsmooth}.  
In particular, only $\mathcal{O}(1/\epsilon)$ (resp., $\mathcal{O}(1/\sqrt{\epsilon})$)
communication rounds are required while agents perform a total of $\mathcal{O}(1/\epsilon^2)$ (resp., $\mathcal{O}(1/\epsilon)$) stochastic
(sub)gradient evaluations for general convex (resp., strongly convex) functions.
Moreover, the latter bounds, a.k.a. sampling complexities, of \algtwo~ can achieve a better dependence on the Lipschitz constant $L$ when the objective function contains a smooth component, i.e., $L>0$ in \eqref{eqn:nonsmooth}, than other existing decentralized stochastic first-order methods.
Only requiring the access to stochastic (sub)gradients at each iteration, \algtwo~ is particularly efficient for solving problems with $f_i:=\bbe_{\xi_i}[F_i(x;\xi_i)]$,
which provides a communication-efficient way to deal with streaming data and decentralized machine learning.
We summarized the achieved communication and sampling complexities in this paper in Table~\ref{tab_summary}.

\begin{table}[h]
\caption{Complexity bounds for obtaining a stochastic $\epsilon$-solution under asynchronous setting}
\label{tab_summary}
\centering
  \vspace{0.1in}
  \setlength\tabcolsep{1.3mm}
  \begin{tabular}{|c|cc|cc|}
  \hline
  {\bf Problem type: $f_i$} & \multicolumn{2}{|c|}{\bf Communication Complexity}&\multicolumn{2}{|c|}{\bf Sampling Complexity}\\
  \hline
   & Our results & Existing results& Our results & Existing results\\
   \hline
   Simple convex & \makecell{${\cal O}\{1/\epsilon\}$\\ {\tiny ADPD}}& \makecell{${\cal O}\{1/\epsilon\}$\\ \tiny Distributed-ADMM\cite{Wei-admm}}& NA & NA\\
   \hline
   Stochastic, convex & \makecell{${\cal O}\{1/\epsilon\}$ \\ \tiny AA-SDCS}& \makecell{${\cal O}\{1/\epsilon^2\}$ \\ \tiny proximal-gradient\cite{Chang-Stochastic}}&\makecell{${\cal O}\{1/\epsilon^2\}$ \\ \tiny AA-SDCS}& \makecell{${\cal O}\{1/\epsilon^2\}$ \\ \tiny proximal-gradient\cite{Chang-Stochastic}}\\
   \hline
   \makecell{Stochastic,\\ strongly convex} & \makecell{${\cal O}\{1/\sqrt \epsilon\}$ \\ \tiny AA-SDCS}& \makecell{${\cal O}\{1/\sqrt \epsilon\}$\\ \tiny synchronous-SDCS\cite{lan2017communication}} & \makecell{${\cal O}\{1/\epsilon\}$ \\ \tiny AA-SDCS}& \makecell{${\cal O}\{1/\epsilon\}$ \\ \tiny synchronous-SDCS\cite{lan2017communication}} \\
   \hline
   \makecell{Stochastic, convex,\\smooth + nonsmooth}\footnotemark[1]& \makecell{${\cal O}\{1/\epsilon\}$ \\ \tiny AA-SDCS}& \makecell{${\cal O}\{1/\epsilon^2\}$ \\ \tiny proximal-gradient\cite{Chang-Stochastic}\footnotemark[2]}& \makecell{${\cal O}\{\tfrac{M^2+\sigma^2}{\epsilon^2} + \tfrac{\sqrt{L}}{\epsilon}\}$ \\ \tiny AA-SDCS}& \makecell{${\cal O}\{\tfrac{\sigma^2}{\epsilon^2} + \tfrac{L}{\epsilon}\}$ \\ \tiny proximal-gradient\cite{Chang-Stochastic}\footnotemark[2]}\\
   \hline
   \makecell{Stochastic,\\ strong convex,\\smooth + nonsmooth}\footnotemark[3]& \makecell{${\cal O}\{1/\sqrt\epsilon\}$ \\ \tiny AA-SDCS}& NA & \makecell{${\cal O}\{\tfrac{M^2+\sigma^2}{\mu\epsilon} + \sqrt{\tfrac{L}{\mu\sqrt\epsilon}}\}$ \\ \tiny AA-SDCS}& NA\\
   \hline
   \end{tabular}
\end{table}
\footnotetext[1]{Here we refer to object functions satisfying the condition that $L, M>0$ in \eqref{eqn:nonsmooth}.}
\footnotetext[2]{The proximal-gradient method proposed in \cite{Chang-Stochastic} can only deal with the case that $f_i$ is a composite function such that it is the summation of smooth functions and a simple nonsmooth function (cf. a regularizer). }
\footnotetext[3]{Here we refer to object functions satisfying the condition that $\mu, L, M>0$ in \eqref{eqn:nonsmooth}.}
Thirdly, we demonstrate the advantages of the proposed methods through preliminary numerical experiments for solving decentralized support vector machine (SVM) problems with real data sets. 
For all testing problems, \algtwo~can significantly save CPU running time over existing state-of-the-art decentralized methods.

To the best of our knowledge, this is the first time that these asynchronous communication sliding algorithms, and the aforementioned separate complexity bounds on communication rounds and stochastic (sub)gradient evaluations under the asynchronous setting are presented in the literature.

This paper is organized as follows.
In Section~\ref{sec:formulation}, we introduce the problem formulation and provide some preliminaries on distance generating functions and prox-functions. 
We present our main asynchronous decentralized primal-dual framework and establish their convergence properties in Section~\ref{sec:alg}. 
Section \ref{sec:exp} is devoted to providing some preliminary numerical results to demonstrate the advantages of our proposed algorithms.
The proofs of the main theorems in Section \ref{sec:alg} are provided in Appendix~\ref{app:conv}. 

\noindent {\bf Notation and Terminologies.}
We denote by $\0b$ and $\1b$ the vector of all zeros and ones whose dimensions vary from the context.
The cardinality of a set $S$ is denoted by $|S|$.
We use $I_d$ to denote the identity matrix in $\mathbb{R}^{d\times d}$.
We use $A \otimes B$ for matrices $A \in \mathbb{R}^{n_1\times n_2}$ and $B \in \mathbb{R}^{m_1\times m_2}$ to denote their Kronecker product of size $\mathbb{R}^{n_1m_1\times n_2m_2}$.
For a matrix $A \in \mathbb{R}^{n\times m}$, we use $A_{i,j}$ to denote the entry of $i$-th row and $j$-th column.
For any $m \ge 1$, the set of integers $\{1,\ldots,m\}$ is denoted by $[m]$.

\section{Problem setup}\label{sec:formulation}
Consider a multi-agent network system whose communication is governed by an undirected graph $\Gc = (\Vc,\Ec)$,
where $\Vc = [m]$ indexes the set of agents,
and $\Ec \subseteq \Vc \times \Vc$ represents the pairs of communicating agents.
If there exists an edge
from agent $i$ to $j$ denote by $(i,j)$, agent $i$ may exchange information with agent $j$. Therefore, each agent $i \in \Vc$ can directly
receive (resp., send) information only from (resp., to) the agents in its neighborhood
$
N_i = \{j \in \Vc \mid (i,j)\in \Ec\} \cup \{i\},
$
where we assume that there always exists a self-loop $(i,i)$ for
all agents $i \in \Vc$, with no communication delay. 
The associated Laplacian ${\cal L} \in \mathbb{R}^{m\times m}$ of $\Gc$ is defined as
\begin{align}\label{def_Laplacian}
{\cal L}_{i,j} = \left\{
\begin{array}{ll}
|N_i|-1& \textrm{ if } i = j\\
-1 & \textrm{ if } i\neq j \textrm{ and } (i,j) \in \Ec\\
0 & \textrm{ otherwise.}
\end{array}
\right.
\end{align}

We introduce an individual copy $x_i$ of the decision variable $x$
for each agent $i \in \Vc$.
Hence, by employing the Laplacian matrix $\cal L$, \eqref{eqn:orgprob} can be written compactly as
\begin{align}\label{eqn:prob}
\min_{\xb \in X^m} &~F(\xb) := \tsum_{i=1}^m f_i(x_i)\\
\text{s.t. } &~\Lb\xb = \0b, \nn
\end{align}
where $X^m := X_1 \times \ldots \times X_m$, $\xb = (x_1; \ldots; x_m) \in X^m$, $F:X^m\to \mathbb{R}$, and $\Lb = {\cal L}\otimes I_d \in \mathbb{R}^{md\times md}$.
The constraint $\Lb\xb = \0b$ is a compact way of writing $x_i = x_j$ for all pairs $(i,j) \in \Ec$. In view of Theorem 4.2.12 in \cite{hom1991topics}, $\Lb$ is symmetric positive semidefinite and its null space coincides with the ``agreement'' subspace, i.e.,
$\Lb\1b = \1b^{\top}\Lb=\0b$.
To ensure each agents can obtain information from every other agents, we need the following assumption as a blanket assumption throughout the paper.
\begin{assumption}\label{assume:G}
The graph $\Gc$ is connected.
\end{assumption}
Under Assumption \ref{assume:G}, problem \eqref{eqn:orgprob} and \eqref{eqn:prob} are equivalent.
We next consider a reformulation of \eqref{eqn:prob}.
By the method of Lagrange multipliers,
problem \eqref{eqn:prob} is equivalent to the following saddle point problem:
\begin{align}\label{eqn:saddle}
\min_{\xb \in X^m}   \left[F(\xb) + \max_{\yb\in \mathbb{R}^{md}} \langle \Lb\xb, \yb\rangle\right],
\end{align}
where 
$\yb = (y_1; \ldots; y_m) \in \mathbb{R}^{md}$ are the Lagrange multipliers
associated with the constraints $\Lb\xb = \0b$.
We assume that there exists an optimal solution $\xb^* \in X^m$
of \eqref{eqn:prob}
and that there exists $\yb^* \in \mathbb{R}^{md}$ such that $(\xb^*,\yb^*)$
is a saddle point of \eqref{eqn:saddle}.
Finally, we define the following terminology. 
\begin{definition}\label{def_solution}
A point $\hat \xb \in X^m$ is called a stochastic $\epsilon$-solution of \eqref{eqn:prob} if
\begin{align}
\bbe[F(\hat \xb)-F(\xb^*)]\le \epsilon \text{ and } \bbe[\|\Lb\hat \xb\|]\le \epsilon.
\end{align}
We say that $\hat \xb$ has primal residual $\epsilon$ and feasibility residual $\epsilon$.
\end{definition}
Note that for problem \eqref{eqn:prob}, the feasibility residual $\|\Lb\hat\xb\|$ measures the disagreement among the local copies $\hat x_i$, for $i \in \Vc$. We will use these two criteria to evaluate the output solutions of the algorithms proposed in this paper.

\subsection{Distance generating function and prox-function}\label{subsec:bregman}
\textit{Prox-function}, also known as \textit{proximity control function} or \textit{Bregman distance function} \cite{BREGMAN1967}, has played an important role
as a substantial generalization of the Euclidean projection, since
it can be flexibly tailored to the geometry of a constraint set $U$.

For any convex set $U$ equipped with an arbitrary norm $\|\cdot\|_U$,
we say that a function $\o: U \to \mathbb{R}$ is a \textit{distance generating function} with modulus $1$
with respect to $\|\cdot\|_U$, if $\o$ is continuously differentiable and strongly convex with
modulus $1$ with respect to $\|\cdot\|_U$, i.e.,
$\la \nabla \o(x) - \nabla \o(u), x-u\ra \ge \|x-u\|_U^2, \ \forall x, u \in U.$
The prox-function induced by $\o$ is given by
\begin{align}\label{eqn:def_V}
V(x,u) \equiv V_{\o}(x,u) := \o(u) - [\o(x) + \la \nabla \o(x), u-x\ra].
\end{align}
We now assume that the constraint set $X_i$ 
for each agent in \eqref{eqn:orgprob} is equipped with norm $\|\cdot\|_{X_i}$, and
its associated prox-function is given by $V_i(\cdot,\cdot)$. It then follows from the strong convexity of $\o$ that
\begin{align}\label{eqn:def_V_i_s}
V_i(x_i,u_i) \ge \tfrac{1}{2}\|x_i-u_i\|_{X_i}^2, \quad \forall x_i, u_i \in X_i, \ i=1,\ldots,m.
\end{align}
We also define the norm associated with the primal feasible set $X^m=X_1 \times \ldots \times X_m$ of \eqref{eqn:saddle} as
$
\|\xb\|^2\equiv \|\xb\|_{X^m}^2:=\tsum_{i=1}^m \|x_i\|_{X_i}^2, \forall
\xb=(x_1;\ldots;x_m)\in X^m$. Therefore, the associated prox-function $\mathbf{V}(\cdot,\cdot)$ is defined as
$
\mathbf{V}(\xb,\ub):=\tsum_{i=1}^m V_i(x_i,u_i), \ \forall \xb,\ub\in X^m.
$
In view of \eqref{eqn:def_V_i_s} 
\beq\label{eqn:def_V_s}
\mathbf{V}(\xb,\ub)\ge \tfrac{1}{2}\|\xb-\ub\|^2, \ \forall \xb, \ub \in X^m.
\eeq

Throughout the paper, we endow the dual space where the multipliers $\yb$ of \eqref{eqn:saddle} reside
with the standard Euclidean norm $\|\cdot\|_2$, since the feasible region of $\yb$ is unbounded.
For simplicity, we often write $\|\yb\|$ instead of $\|\yb\|_2$ for a dual multiplier $\yb \in  \mathbb{R}^{md}$.

\section{The algorithms}\label{sec:alg}
In this section, we introduce an asynchronous decentralized primal-dual framework for solving \eqref{eqn:orgprob} in the decentralized setting. Specifically, two asynchronous methods are presented, namely asynchronous decentralized primal-dual method in Subsection~\ref{sec:adpd} and asynchronous accelerated stochastic decentralized communication sliding in Subsection~\ref{sec:asdcs}, respectively.
Moreover, we establish complexity bounds (number of inter-node communication rounds and/or intra-node stochastic (sub)gradient evaluations) separately in terms of primal functional optimality gap and constraint (or consistency) violation for solving \eqref{eqn:orgprob}-\eqref{eqn:prob}. 

\subsection{Asynchronous decentralized primal-dual method}\label{sec:adpd}
Our main goals in this subsection are to introduce the basic scheme of asynchronous decentralized primal-dual (\algone) method, as well as establishing its complexity results. Throughout this subsection, we assume that $f_i$ is a simple function such that we can solve the primal subproblem \eqref{eqn:algo6-i} explicitly.

We formally present the \algone~method in Algorithm~\ref{alg:ADPD-i}. Each agent $i$ maintains two local sequences, namely, the primal estimates $\{x_i^k\}$
and the dual variables $\{y_i^k\}$.
All primal estimates $x_i^{-1}$ and $x_i^0$ are locally initialized from some arbitrary point in $X_i$, and each dual variable $y_i^0=\0b$.
At each iteration $k \ge 1$, only one randomly selected agent (cf. activated agent) $i_k\in [m]$ updates its dual variable $y_{i_k}^k$, and then one randomly selected agent $j_k\in [m]$ updates its primal variable $x_{j_k}^k$.
In particular, each agent in the activated agent's neighborhood, i.e., agents $i \in N_{i_k}$,
computes a local prediction $\tilde{x}_i^k$ using the two previous primal estimates (ref. \eqref{eqn:algo1-i}), and send it to agent $i_k$.
In \eqref{eqn:algo2-i}-\eqref{eqn:algo3-i},
the activated agent $i_k$ calculates its neighborhood disagreement $v_{i_k}^k$ using the receiving messages,
and updates the dual variable $y_{i_k}^k$. Other agents' dual variables remain unchanged. 
Then, another round of communication \eqref{eqn:algo5-i} between the activated agent $j_k$ and its neighboring agents occurs after the dual prediction step \eqref{eqn:algo4-i}. 
Lastly, the activated agent $j_k$ solves the proximal projection subproblem \eqref{eqn:algo6-i} to update $x_{j_k}^k$, and other agents' primal estimates remain the same as the last iteration.

It should be emphasized that each iteration $k$ only involves two communication rounds (cf. \eqref{eqn:algo2-i} and \eqref{eqn:algo5-i}) between the activated agents and its neighboring agents, which significantly reduces synchronous delays appearing in many decentralized methods (e.g., \cite{Duchi2012,Wilbur-ADMM,WYin-Extra,lan2017communication}), since these methods require at least one communication round between all agents and their neighboring agents iteratively. 
Also note that similar to the asynchronous ADMM proposed in \cite{Wei-admm}, \algone~employs node-based activation. However, while \cite{Wei-admm} requires all agents to update dual variables iteratively based on the information obtaining from communication, in \algone~only the activated agent $i_k$ needs to collect neighboring information and update its dual variable (see \eqref{eqn:algo2-i} and \eqref{eqn:algo3-i}), and hence \algone~further reduces communication costs and synchronous delays comparing to \cite{Wei-admm}. Moreover, \algone~can achieve the same rate of convergence ${\cal O}(1/\epsilon)$ as \cite{Wei-admm} under the assumption that \eqref{eqn:algo6-i} can be solved explicitly.
We will demonstrate later that by exploiting the strong convexity, an improved ${\cal O}(1/\sqrt \epsilon)$ rate of convergence can be obtained.

\begin{algorithm}
\caption{Asynchronous decentralized primal-dual (\algone) update for each agent $i$}
\label{alg:ADPD-i}
\begin{algorithmic}
\STATE Let $x_i^0 = x_i^{-1}\in X_i$ and $y_i^0 = \0b$ for $i \in [m]$, the nonnegative parameters $\{\alpha_k\}$, $\{\tau_k\}$ and $\{\eta_k\}$
be given.
\FOR{$k = 1, \ldots, N$}
\STATE{Uniformly choose $i_k,j_k\in [m]$, and update $(x_i^k,y_i^k)$ according to}
\begin{align}
\tilde{x}_i^k =&~ \alpha_k (x_i^{k-1}-x_i^{k-2}) + x_i^{k-1}.\label{eqn:algo1-i}\\
v_{i_k}^k = &~\tsum_{j\in N_{i_k}}{\cal L}_{i_k,j}\tilde{x}_j^k. \COMMENT{\text{communication}}\label{eqn:algo2-i}\\
y_i^k =&~
\begin{cases}
\argmin_{y_i \in \mathbb{R}^d}~  \la -v_i^k, y_i\ra + \tfrac{\tau_k}{2}\|y_i-y_i^{k-1}\|^2=y_i^{k-1} +\tfrac{1}{\tau_k}v_i^k, \ &i=i_k,\\
y_i^{k-1}, \ &i \ne i_k.
\end{cases}\label{eqn:algo3-i}\\
\tilde{y}_i^k =&~ m(y_i^k-y_i^{k-1})+y_i^{k-1}.\label{eqn:algo4-i}\\
w_{j_k}^k = &~\tsum_{j\in N_{j_k}}{\cal L}_{j_k,j}\tilde y_j^k.\COMMENT{\text{communication}}\label{eqn:algo5-i}\\
x_i^k = &~
\begin{cases} 
\argmin_{x_i\in X_i}~  \la w_i^k, x_i\ra + f_i(x_i) + \eta_kV_i(x_i^{k-1},x_i), \ &i=j_k,\\
x_j^{k-1}, \ &i\ne j_k.
\end{cases}\label{eqn:algo6-i}
\end{align}
\ENDFOR
\end{algorithmic}
\end{algorithm}

In the following theorem, we provide a specific selection of $\{\alpha_k\}$, $\{\tau_k\}$ and $\{\eta_k\}$, which leads to ${\cal O}(1/\epsilon)$ complexity bounds for the functional optimality gap and also the feasibility residual to obtain a stochastic $\epsilon$-solution of \eqref{eqn:prob}.

\vgap
\begin{theorem}\label{main_adpd}
Let $\xb^*$ be an optimal solution of \eqref{eqn:prob}, and $d_{max}$ be the maximum degree of graph $\Gc$, and suppose that $\{\alpha_k\}$, $\{\tau_k\}$ and $\{\eta_k\}$ are set to 
\beq\label{para_adpd}
\alpha_k = m,\ \eta_k = 2md_{max}, \mbox{ and } \tau_k=2md_{max}, \forall k= 1,\dots,N.
\eeq
Then, for any $N\ge 1$, we have
\begin{align}\label{bnd_adpd}
\bbe_{[i_k,j_k]}\{F(\bar\xb^N)-F(\xb^*)\}&
\le {\cal O}\left\{\tfrac{m\Delta_{\xb^0}}{N+m} \right\}, \ \
\bbe_{[i_k,j_k]}\{\|\Lb\bar\xb^N\|\}\le {\cal O}\left\{\tfrac{m\Delta_{\xb^0}}{N+m} \right\},
\end{align}
where $\bar\xb^N=\tfrac{1}{N+m}(\tsum_{k=0}^{N-1}\xb^k+m\xb^N)$, $\{\xb^k\}$ is generated by Algorithm~\ref{alg:ADPD-i}, and $\Delta_{\xb^0}:=\max\Big\{C_{\xb^0},\|\Lb\xb^0\|+ md_{max}\Big(\|\yb^*\|+ \sqrt{\tfrac{C_{\xb^0} + \la \Lb\xb^0,\yb^*\ra}{md_{max}}}
 \Big)\Big\}$ with $C_{\xb^0} = F(\xb^0)-F(\xb^*)+md_{max}\mathbf{V}(\xb^0,\xb^*)$.
\end{theorem}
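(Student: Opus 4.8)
The plan is to analyze Algorithm~\ref{alg:ADPD-i} as a randomized block-coordinate version of a primal-dual method applied to the saddle-point problem \eqref{eqn:saddle}, tracking a suitable one-iteration "gap" inequality and then telescoping. First I would introduce, at each iteration $k$, the conditional expectation over the pair $(i_k,j_k)$ and observe that the dual update \eqref{eqn:algo3-i} on coordinate $i_k$ and the primal update \eqref{eqn:algo6-i} on coordinate $j_k$ are, in expectation, $\tfrac1m$ times the corresponding \emph{full} (synchronous) primal-dual steps; the factors $\alpha_k = m$ and the $m$-scaling in the dual prediction \eqref{eqn:algo4-i} are precisely chosen to compensate for this $\tfrac1m$ sampling probability, so that the expected dynamics mimic a deterministic accelerated primal-dual iteration on $\langle \Lb\xb,\yb\rangle + F(\xb)$. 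I would make this precise by writing, for an arbitrary reference pair $(\xb,\yb)$ (ultimately $\xb=\xb^*$ and $\yb=\yb^*$, but kept general for the feasibility bound), the standard three-point/prox inequalities for \eqref{eqn:algo3-i} and \eqref{eqn:algo6-i}: the strong convexity of $V_i$ (cf.\ \eqref{eqn:def_V_i_s}) and of the quadratic in $y_i$ gives
\begin{align}
\langle -v_{i_k}^k, y_{i_k}^k - y\rangle &\le \tfrac{\tau_k}{2}\bigl(\|y_{i_k}^{k-1}-y\|^2 - \|y_{i_k}^k-y\|^2 - \|y_{i_k}^k-y_{i_k}^{k-1}\|^2\bigr),\nn\\
\langle w_{j_k}^k, x_{j_k}^k - x\rangle + f_{j_k}(x_{j_k}^k) - f_{j_k}(x) &\le \eta_k\bigl(V_{j_k}(x_{j_k}^{k-1},x) - V_{j_k}(x_{j_k}^k,x) - V_{j_k}(x_{j_k}^{k-1},x_{j_k}^k)\bigr).\nn
\end{align}
Summing the appropriate $m$-scaled versions of these over $k$ and taking conditional expectations turns the left-hand sides into the primal-dual gap $F(\bar\xb^N)-F(\xb) + \langle \Lb\bar\xb^N,\yb\rangle - \langle\Lb\xb,\yb\rangle$ evaluated at the weighted average $\bar\xb^N = \tfrac1{N+m}(\sum_{k=0}^{N-1}\xb^k + m\xb^N)$, after using convexity of $F$ and the $\tfrac{\mu}{2}$-term dropped ($\mu\ge0$ only helps).

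Next I would handle the coupling terms. The communication steps \eqref{eqn:algo2-i}, \eqref{eqn:algo5-i} produce $v_{i_k}^k = \sum_j {\cal L}_{i_k,j}\tilde x_j^k = (\Lb\tilde\xb^k)_{i_k}$ and $w_{j_k}^k = (\Lb\tilde\yb^k)_{j_k}$, so the bilinear terms assemble into $\langle \Lb\tilde\xb^k,\yb^k - \yb\rangle$ and $\langle \Lb\tilde\yb^k, \xb^k - \xb\rangle$ pieces; using $\Lb$ symmetric and the prediction formulas $\tilde\xb^k = \xb^{k-1} + \alpha_k(\xb^{k-1}-\xb^{k-2})$, $\tilde\yb^k = m(\yb^k - \yb^{k-1}) + \yb^{k-1}$, these recombine (via the usual Chambolle--Pock-style telescoping identity $\langle \Lb(\xb^{k-1}-\xb^{k-2}), \cdot\rangle$ rearrangement) so that, because $\Lb\xb^* = \0b$, one is left with a telescoping sum plus a residual cross term of the form $-\alpha_k\langle \Lb(\xb^{k-1}-\xb^{k-2}), \yb^k - \yb^{k-1}\rangle$. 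That residual is controlled by Young's inequality against the $\|y^k - y^{k-1}\|^2$ and $V(x^{k-1},x^k)\ge\tfrac12\|x^{k-1}-x^k\|^2$ negative terms we banked above; this is where the relations $\alpha_k\tau_k = \eta_k = 2md_{\max}$ and the bound $\|\Lb\vb\|^2 \le 2d_{\max}\|\vb\|^2$ (a Gershgorin/degree bound on the Laplacian, since $\|{\cal L}\|\le 2d_{\max}$) are used to show the cross terms are absorbed, leaving a clean recursion. Telescoping from $k=1$ to $N$ and dividing by the total weight $N+m$ yields
\[
\bbe\bigl[F(\bar\xb^N) - F(\xb) + \langle\Lb\bar\xb^N,\yb\rangle\bigr] \;\le\; \tfrac{1}{N+m}\Bigl(m\,\eta_1 \mathbf{V}(\xb^0,\xb) + \tfrac{\tau_1}{2}\|\yb\|^2 + \langle \Lb\xb^0,\yb\rangle\Bigr),
\]
after using $\xb^{-1}=\xb^0$ and $\yb^0=\0b$ and $\Lb\xb=\0b$ for $\xb=\xb^*$.

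Finally I would convert this saddle-point bound into the two separate estimates in \eqref{bnd_adpd}. Taking $\yb = \0b$ gives the optimality gap $\bbe[F(\bar\xb^N)-F(\xb^*)] \le {\cal O}(m d_{\max}\mathbf{V}(\xb^0,\xb^*)/(N+m))$. For the feasibility residual, I would use the standard trick: for any $\yb$, $F(\bar\xb^N)-F(\xb^*) + \langle\Lb\bar\xb^N,\yb\rangle \ge -\|\yb^*\|\,\|\Lb\bar\xb^N\|$ by the saddle-point property of $(\xb^*,\yb^*)$ (which bounds $F(\xb^*)-F(\bar\xb^N) \le \langle \Lb\bar\xb^N,\yb^*\rangle$), so plugging in $\yb = \yb^* + \|\Lb\bar\xb^N\|\cdot(\Lb\bar\xb^N)/\|\Lb\bar\xb^N\|$ (or optimizing over the radius) produces a quadratic inequality in $\bbe\|\Lb\bar\xb^N\|$ whose solution gives the ${\cal O}(m\Delta_{\xb^0}/(N+m))$ bound, with $\Delta_{\xb^0}$ exactly the $\max\{\cdot,\cdot\}$ quantity appearing in the statement (the $\sqrt{\,\cdot\,}$ term being the root of that quadratic). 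Substituting $\eta_1=\tau_1=\alpha_1=$ their values from \eqref{para_adpd} and $\mathbf{V}(\xb^0,\xb^*)$ in terms of $C_{\xb^0}$ finishes both estimates.

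\textbf{Main obstacle.} The delicate point is the bookkeeping in the randomized/asynchronous telescoping: because only the $i_k$-th dual block and $j_k$-th primal block move, the "banked" negative terms $\tfrac{\tau_k}{2}\|y_{i_k}^k-y_{i_k}^{k-1}\|^2$ and $\eta_k V_{j_k}(x_{j_k}^{k-1},x_{j_k}^k)$ live on single coordinates, while the cross term $\alpha_k\langle\Lb(\xb^{k-1}-\xb^{k-2}),\yb^k-\yb^{k-1}\rangle$ is also supported on single coordinates but at a \emph{shifted} iteration index ($k$ vs.\ $k-1$) — so one must carefully arrange a Lyapunov-type quantity (carrying an $\bbe_{i_{k}}\langle\Lb(\xb^{k-1}-\xb^{k-2}),\yb^{k-1}-\yb\rangle$-style term from one iteration to the next, scaled by the right power of $m$) so that, after conditional expectation, everything telescopes and the leftover is sign-definite. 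Getting the constants to line up so that $\alpha_k=m$, $\eta_k=\tau_k=2md_{\max}$ is exactly the threshold where the cross terms are dominated — using $\|{\cal L}\|\le 2d_{\max}$ — is the technical heart; the rest is the now-standard saddle-point-to-feasibility conversion.
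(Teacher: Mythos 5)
Your per-iteration analysis---interpreting the single-block updates \eqref{eqn:algo3-i} and \eqref{eqn:algo6-i} as probability-$\tfrac1m$ samples of full primal-dual steps compensated by $\alpha_k=m$ and the $m$-scaling in \eqref{eqn:algo4-i}, applying the prox/three-point inequalities, and absorbing the shifted-index cross terms via Young's inequality and a degree bound on ${\cal L}$---is essentially the paper's own route: this is what Lemma~\ref{lem_exp}, Lemma~\ref{lem_recursion} and Proposition~\ref{prop_bndQ} do, with the weight sequence \eqref{def_xweight} formalizing your ``expected dynamics'' reduction and explaining the extra weight $m$ on $\xb^N$ in $\bar\xb^N$. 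Up to that point your outline is sound, and the parameter thresholds you identify match conditions \eqref{theta_tau}--\eqref{eta_tau_alpha2}.

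The genuine gap is in your final step, the conversion to the feasibility bound. Two problems. First, your intermediate bound retains the term $\tfrac{\tau_1}{2}\|\yb\|^2$ and is only valid for a fixed, deterministic $\yb$: the reduction from the randomized single-block iterates to the virtual full iterates is an identity in expectation over $(i_k,j_k)$ (Lemma~\ref{lem_exp}), so you cannot substitute a trajectory-dependent test point such as $\yb=\yb^*+\Lb\bar\xb^N$. Second, even restricting to deterministic $\yb$ and optimizing over a ball only controls $\|\bbe[\Lb\bar\xb^N]\|$, whereas \eqref{bnd_adpd} asserts a bound on $\bbe\|\Lb\bar\xb^N\|$. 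The paper avoids both issues by not discarding the negative term $-\tfrac{m\hat\theta_N\tau_N}{2}\|\yb-\yb^N\|^2$, so that (using $\yb^0=\0b$) the quadratic in $\yb$ cancels and the bound \eqref{bnd_Q1} is \emph{linear} in $\yb$, with the perturbation vector $\vb$ of \eqref{def_resi} built from $\Lb\xb^0$, $\Lb(\xb^N-\xb^{N-1})$ and $\yb^N$; it then proves the additional last-iterate estimates \eqref{bnd_xyn} on $\bbe\|x_{j_N}^N-x_{j_N}^{N-1}\|^2$ and $\bbe\|\yb^*-\yb^N\|^2$ (by testing at $\zb^*$ and using $Q(\bar\zb^N,\zb^*)\ge 0$) in order to bound $\bbe\|\vb\|$, and finally invokes the Monteiro--Svaiter perturbation criterion, Proposition~\ref{prop:approx}. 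That piece is exactly where the square-root term and the $m d_{max}\|\yb^*\|$ term inside $\Delta_{\xb^0}$ come from; it is absent from your proposal, and without it (or some substitute that genuinely controls $\bbe\|\Lb\bar\xb^N\|$ rather than $\|\bbe[\Lb\bar\xb^N]\|$) the second half of \eqref{bnd_adpd} does not follow.
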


Theorem~\ref{main_adpd} implies the total number of inter-node communication rounds performed by \algone~to find a stochastic $\epsilon$-solution of \eqref{eqn:prob} can be bounded by 
\beq\label{comp_communication}
{\cal O}\left\{\tfrac{md_{max}\Delta_{\xb^0}}{\epsilon}\right\}.
\eeq

Observed that in Algorithm~\ref{alg:ADPD-i}, we assume that $f_i$'s are simple functions such that \eqref{eqn:algo6-i} can be solved explicitly. However, since $f_i$'s are possibly nonsmooth functions and/or possess composite structures, it is often difficult to solve \eqref{eqn:algo6-i} especially when $f_i$ is provided in the form of expectation. In the next subsection, we present a new asynchronous stochastic decentralized primal-dual type method, called the asynchronous accelerated stochastic decentralized communication sliding (\algtwo) method, for the case when \eqref{eqn:algo6-i} is not easy to solve. 

\subsection{Asynchronous accelerated stochastic decentralized communication sliding} \label{sec:asdcs}
In the subsection, we show that one can still maintain the same number of inter-node communications
even when the subproblem \eqref{eqn:algo6-i} is approximately solved through an optimal
stochastic approximation method, namely AC-SA proposed in \cite{ghadimi2013optimal,GhaLan10-1,Lan10-3}, and that
the total number of required stochastic (sub)gradient evaluations (or sampling complexity) is comparable to centralized mirror descent methods. 
Throughout this subsection, we assume that only noisy (sub)gradient information of $f_i$,  $i = 1, \ldots, m$, is available or easier to compute.
This situation happens when the function $f_i$'s are given either in the form of expectation
or as the summation of lots of components.
Moreover, we assume that the first-order information of the function $f_i$, $i = 1, \ldots, m$,
can be accessed by a stochastic oracle (SO), which,  given a point $u^t \in X$,
outputs a vector $G_i(u^t,\xi_i^t)$ such that
\begin{align}
&\bbe[G_i(u^t,\xi_i^t)]=f_i'(u^t) \in \partial f_i(u^t),\label{assume:unbiased}\\
&\bbe[\|G_i(u^t,\xi_i^t)-f_i'(u^t)\|_*^2]\le \sigma^2, \label{assume:sm_bounded}
\end{align}
where $\xi_i^t$ is a random vector which models a source of uncertainty and is independent of the search point $u^t$, and the distribution $\mathbb{P}(\xi_i)$ is not known in advance.
We call $G_i(u^t,\xi_i^t)$ a \textit{stochastic (sub)gradient} of $f_i$ at $u^t$. Observe that this assumption covers the case that one can access the exact (sub)gradients of $f_i$ whenever $\sigma =0$.

In order to exploit the strong convexity of the prox-function $V_i$, we assume in this subsection that each prox-function $V_i(\cdot,\cdot)$ (cf. \eqref{eqn:def_V}) are growing quadratically 
with the \textit{quadratic growth constant} $\mathcal{C}$, i.e., there exists a constant $\mathcal{C}>0$ such that
\begin{align}\label{eqn:proxquad}
V_i(x_i,u_i) \le \tfrac{\mathcal{C}}{2}\|x_i-u_i\|_{X_i }^2, \quad \forall x_i, u_i \in X_i,\ i=1,\ldots,m.
\end{align}
By \eqref{eqn:def_V_i_s}, we must have $\mathcal{C}\ge 1$.


\begin{algorithm}
\caption{Asynchronous Accelerated Stochastic Decentralized Communication Sliding (\algtwo)}
\label{alg:AASDCS}
\begin{algorithmic}
\STATE{Let $x_i^0 = x_i^{-1} = \underline{x}_i^0 \in X_i$, $y_i^0 =\0b$ for $i \in [m]$ and the nonnegative parameters $\{\alpha_k\}$, $\{\tau_k\}$, $\{\eta_k\}$ and $\{T_k\}$ be given.}
\FOR{$k = 1, \ldots, N$}
\STATE{Uniformly choose $i_k,j_k\in [m]$, and update $(\underline x_i^k, y_i^k)$ according to}
\begin{align}
\tilde{x}_i^k =&~ \alpha_k [m\underline{x}_i^{k-1}-(m-1)\underline x_i^{k-2}-x_i^{k-2}] +  x_i^{k-1}.\label{eqn:algo1-acs}\\
v_{i_k}^k = &~\tsum_{j\in N_{i_k}}{\cal L}_{i_k,j}\tilde{x}_j^k.\COMMENT{\text{communication}}\label{eqn:algo2-acs}\\
y_i^k =&~
\begin{cases}
y_i^{k-1} +\tfrac{1}{\tau_k}v_i^k, \ &i=i_k,\\
y_i^{k-1}, \ &i \ne i_k.
\end{cases}\label{eqn:algo3-acs}\\
\tilde{y}_i^k =&~ m(y_i^k-y_i^{k-1})+y_i^{k-1}.\label{eqn:algo4-acs}\\
w_{j_k}^k = &~\tsum_{j\in N_{j_k}}{\cal L}_{j_k,j}\tilde y_j^k.\COMMENT{\text{communication}}\label{eqn:algo5-acs}\\
(x_i^k,\underline{x}_i^k) =&~ 
\begin{cases}
\inner(f_i,X_i,V_i,T_k,\eta_k,w_i^k,x_i^{k-1}), \ &i = j_k,\\
(x_i^{k-1}, \underline x_i^{k-1}), \ &i\ne j_k.
\end{cases}\label{eqn:algo6-acs}
\end{align}
\ENDFOR

\STATE
\STATE The \inner~ (Accelerated Communication-Sliding) procedure called at \eqref{eqn:algo6-acs} is stated as follows.\\
\textbf{procedure:} $(x,\underline{x}) =\text{\inner}(\phi,U,V,T,\eta,w,x)$
\STATE Let $u^0 = \underline{u}^0 = x$ and the parameters $\{\beta_t\}$ and $\{\lambda_t\}$ be given.
\FOR{$t = 1,\ldots,T$}
\STATE
\begin{align}
\hat u^t =&~ \tfrac{(1-\lambda_t)(\mu+\eta+\beta_t)}{\beta_t+(1-\lambda_t^2)(\mu+\eta)}\underline u^{t-1} + \tfrac{\lambda_t[(1-\lambda_t)(\mu +\eta)+ \beta_t]}{\beta_t+(1-\lambda_t^2)(\mu+\eta)} u^{t-1}.\label{eqn:inner0}\\
G^t =&~ G(\hat u^t,\xi^t). \COMMENT{\text{Call the SO}}\label{eqn:gradient}\\ 
u^t = &~ \argmin_{u \in U}\left\{\lambda_t[\langle w + G^t + \eta(\nabla w(\hat u^t)- \nabla w(x)), u \rangle +(\mu + \eta) V(\hat u^t,u)]\right.\nn\\
&\quad \quad\quad  \left.+ [(1-\lambda_t)(\mu+\eta) + \beta_t]V(u^{t-1},u)\right\}.\label{eqn:inner1}\\
\underline u^t =&~ (1-\lambda_t)\underline u^{t-1} +  \lambda_t u^t.\label{eqn:inner2}
\end{align}
\ENDFOR
\STATE Set $x = u^T$ and $\underline{x} = \underline{u}^T$.\\
\textbf{end procedure}
\end{algorithmic}
\end{algorithm}

We now add a few comments about Algorithm~\ref{alg:AASDCS}. 
Firstly, similar to SDCS proposed in \cite{lan2017communication}, \algtwo~exploits two loops: the doubly randomized primal-dual scheme as outer loop and the \inner~procedure as inner loop. More specifically, \algtwo~utilizes the AC-SA method proposed in \cite{ghadimi2013optimal,GhaLan10-1,Lan10-3} to approximately solve the primal subproblem in \eqref{eqn:algo6-i}, which provides a unified scheme for solving a general class of problems defined in \eqref{eqn:nonsmooth} and leads to accelerated rate of convergence when $f_i$ possesses smooth structure. 
Secondly, the same dual information $w = w_{j_k}^k$ (see \eqref{eqn:algo5-acs})
has been used throughout the $T =T_k$ iterations of the \inner~procedure,
and hence no additional communication is required within the procedure. 
Finally, since \algtwo~randomly selects one subproblem \eqref{eqn:algo6-i} and solved it inexactly, the outer loop also needs to be carefully designed to attain the best possible rate of convergence. In fact, the \inner~procedure provides two approximate solutions of \eqref{eqn:algo6-i}: one is the primal estimate $\{x_i^k\}$ and the other is $\{\underline x_i^k\}$, which will be maintained by each agent and later play a crucial role in the development and convergence analysis of \algtwo. We also accordingly modify the primal extrapolation step of the outer loop (cf. \eqref{eqn:algo1-acs}).
For later convenience, we refer to the subproblem \inner~solved at iteration $k$ as $\Phi^k(x_i)$, i.e., 
\begin{align}\label{def_phi}
\argmin_{x_i\in X_i}\left\{\Phi^k(x_i):= \la w_i^k, x_i\ra + f_i(x_i) + \eta_kV_i(x_i^{k-1},x_i)\right\}.
\end{align}

Theorem~\ref{main_aasdcs} provides a specific selection of $\{\alpha_k\}$, $\{\tau_k\}$, $\{\eta_k\}$ and $\{T_k\}$ for Algorithm~\ref{alg:AASDCS}, and $\{\lambda_t\}$ and $\{\beta_t\}$ for the \inner~procedure, which leads to ${\cal O}(1/\epsilon)$ complexity bounds for the functional optimality gap and also the feasibility residual to obtain a stochastic $\epsilon$-solution of \eqref{eqn:prob}.

\begin{theorem}\label{main_aasdcs}
Let $\xb^*$ be an optimal solution of \eqref{eqn:prob}, and $d_{max}$ be the maximum degree of graph $\Gc$, and suppose that the parameters $\{\lambda_t\}$ and $\{\beta_t\}$ in the \inner~procedure of Algorithm~\ref{alg:AASDCS} be set to 
\beq\label{para_sgd1}
\lambda_t = \tfrac{2}{t+1}, \ \beta_t = \tfrac{4(\mathcal{C}+L)}{t(t+1)}, \ \forall t\ge1,
\eeq
and 
$\{\alpha_k\}$, $\{\tau_k\}$, $\{\eta_k\}$ and $\{T_k\}$ are set to 
\begin{align}\label{para_aasdcs}
\alpha_k &= 1,\ \eta_k = 4md_{max},\ \tau_k=2d_{max},\nn\\
\mbox{and} \ T_k&=\max\left\{\left\lceil \tfrac{(M^2+\sigma^2)N}{d_{max}\mathcal{D}}\right\rceil, \left\lceil \sqrt{\tfrac{\mathcal C+L}{md_{max}}}\right\rceil\right\}, \ \forall k= 1,\dots,N,
\end{align}
for some ${\cal D}>0$. Then, for any $N\ge 1$, we have
\begin{align}\label{bnd_aasdcs}
\bbe\{F(\bar\xb^N)-F(\xb^*)\}&\le {\cal O}\left\{\tfrac{m\Delta_{\xb^0,{\cal D}}}{N+m}\right\}, \ \
\bbe\{\|\Lb\bar\xb^N\|\}\le {\cal O}\left\{\tfrac{m\Delta_{\xb^0,{\cal D}}}{N+m}\right\},
\end{align}
where $\bar\xb^N=\tfrac{1}{N+m}(\tsum_{k=0}^{N-1}\underline \xb^k+m\underline \xb^N)$, $\{\underline \xb^k\}$ is generated by Algorithm~\ref{alg:AASDCS}, and $\Delta_{\xb^0,{\cal D}}: = \max\Big\{C_{\xb^0,{\cal D}}, \|\Lb\xb^0\|+ d_{max}\Big(\|\yb^*\|+
\sqrt{\tfrac{C_{\xb^0,{\cal D}} + \la \Lb\xb^0,\yb^*\ra}{d_{max}}}
\Big)\Big\}$ with $C_{\xb^0,{\cal D}}=F(\xb^0)-F(\xb^*)+md_{max}\mathbf{V}(\xb^0,\xb^*) + \tfrac{\cal D}{m}$.
\end{theorem}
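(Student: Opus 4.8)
\textbf{Proof proposal for Theorem~\ref{main_aasdcs}.}

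The plan is to mimic the analysis of \algone~(Theorem~\ref{main_adpd}) but replace the exact primal update \eqref{eqn:algo6-i} by the inexactness guarantee supplied by the \inner~procedure. First I would establish a one-iteration error bound for the \inner~procedure: applied to the composite problem $\Phi^k(\cdot)$ in \eqref{def_phi} with the parameters \eqref{para_sgd1}, it returns $(x_i^k,\underline x_i^k)$ such that, for any $u\in X_i$,
\begin{align}
(\mu+\eta_k)\bbe[\mathbf{V}(\underline x_i^k,u)]
\le \text{(telescoping terms in }\mathbf{V}(x_i^{k-1},u)\text{)} + {\cal O}\!\left(\tfrac{(M^2+\sigma^2)}{\eta_k T_k}\right) + \text{(acceleration residual)},
\end{align}
which is the standard AC-SA estimate (from \cite{ghadimi2013optimal}): the smooth part $\eta_k V_i(x_i^{k-1},\cdot)$ and any smooth component of $f_i$ (Lipschitz constant $L$) are handled at the accelerated ${\cal O}((\mathcal C+L)/T_k^2)$ rate, while the nonsmooth/stochastic part $M,\sigma$ contributes ${\cal O}((M^2+\sigma^2)/(\eta_k T_k))$. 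The choice $T_k=\max\{\lceil (M^2+\sigma^2)N/(d_{max}{\cal D})\rceil,\lceil\sqrt{(\mathcal C+L)/(md_{max})}\rceil\}$ is tuned precisely so that the per-iteration inner error is ${\cal O}({\cal D}/(mN))$, and hence the total inner error accumulated over $N$ outer iterations is ${\cal O}({\cal D}/m)$ — this is exactly the extra additive term in $C_{\xb^0,{\cal D}}$.

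Second, I would set up the outer-loop (saddle-point) analysis. The key is a gap-type inequality: using the dual update \eqref{eqn:algo3-acs}--\eqref{eqn:algo4-acs} and the modified extrapolation \eqref{eqn:algo1-acs}, together with convexity of $F$ and the inexact primal optimality condition for $\Phi^k$, one derives for every $(\xb,\yb)\in X^m\times\bbr^{md}$
\begin{align}
\bbe\!\left[\la \Lb\bar\xb^N,\yb\ra + F(\bar\xb^N) - F(\xb) - \la\Lb\xb,\bar\yb^N\ra\right]
\le \tfrac{1}{N+m}\,{\cal O}\!\left(md_{max}\mathbf{V}(\xb^0,\xb) + \tfrac{\|\yb\|^2}{d_{max}} + \tfrac{{\cal D}}{m}\right),
\end{align}
after taking expectation over the random indices $i_k,j_k$ (which introduces the factor $m$ in $\eta_k,\tau_k$, just as in \algone) and over the stochastic oracle. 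The new ingredients relative to \algone~are: (i) $\underline\xb^k$ rather than $\xb^k$ enters the ergodic average $\bar\xb^N$, so one must track the three-term recursion linking $x^k,\underline x^k,\tilde x^k$ and verify the telescoping still closes; (ii) the inner error term from Step~1 is inserted and bounded by ${\cal D}/m$ as above. The parameter choice \eqref{para_aasdcs} ($\alpha_k=1$, $\eta_k=4md_{max}$, $\tau_k=2d_{max}$) is what makes the cross terms between primal and dual cancel, exactly as the $d_{max}\ge$ (spectral-norm-type) bound on $\Lb$ is used in \cite{lan2017communication}.

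Finally, I would convert the saddle-point bound into the two stated estimates. Taking $\xb=\xb^*$ and maximizing over $\yb$ (the $-\la\Lb\xb^*,\bar\yb^N\ra = 0$ since $\Lb\xb^*=\0b$) gives $\bbe[F(\bar\xb^N)-F(\xb^*)+\|\Lb\bar\xb^N\|\cdot\|\yb\|]\le {\cal O}(mC_{\xb^0,{\cal D}}/(N+m))$ for all $\yb$ in a ball; combined with the standard argument that bounds $F(\bar\xb^N)-F(\xb^*)$ from below by $-\|\yb^*\|\,\|\Lb\bar\xb^N\|$ (using that $(\xb^*,\yb^*)$ is a saddle point), one isolates $\bbe[\|\Lb\bar\xb^N\|]\le{\cal O}(m\Delta_{\xb^0,{\cal D}}/(N+m))$ and then feeds this back to get $\bbe[F(\bar\xb^N)-F(\xb^*)]\le{\cal O}(m\Delta_{\xb^0,{\cal D}}/(N+m))$; the quantity $\Delta_{\xb^0,{\cal D}}$ with its square-root term is precisely the bound on the dual iterate norm that emerges from this bootstrapping. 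I expect the main obstacle to be Step~2: correctly handling the coupling introduced by the auxiliary sequence $\underline\xb^k$ in the extrapolation \eqref{eqn:algo1-acs} together with the randomized single-block updates, so that the telescoping sum over $k$ still collapses and the inexactness from the \inner~procedure enters only additively at the scale ${\cal D}/m$; the inner-loop estimate (Step~1) is essentially a black-box application of known AC-SA results, and the final conversion (Step~3) is routine once the saddle bound is in hand.
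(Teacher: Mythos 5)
Your overall architecture matches the paper's: Step 1 is exactly Proposition~\ref{prop:inner} (the AC-SA estimate applied to $\Phi^k$ with \eqref{para_sgd1}), and Step 2 corresponds to Proposition~\ref{prop_bndQ_aasdcs}, where the randomized single-block updates are handled through expectation identities of the type in Lemma~\ref{lem_exp} and the modified extrapolation \eqref{eqn:algo1-acs} is telescoped with the weights $\hat\theta_k$; your accounting of the inner error ($T_k$ chosen so the accumulated stochastic term contributes ${\cal O}({\cal D}/m)$ inside $C_{\xb^0,{\cal D}}$) is also consistent with the paper.

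The genuine gap is in your Steps 2--3, i.e., in how the saddle-point bound is converted into the two stated estimates. You propose a bound whose $\yb$-dependence is quadratic (a $\|\yb\|^2$ remainder; note also the scaling should be $m\hat\theta_1\tau_1\|\yb\|^2/2={md_{max}}\|\yb\|^2/(N+m)$, not $\|\yb\|^2/d_{max}$), and then you ``maximize over $\yb$ in a ball'' and bootstrap. But the gap inequality you can actually derive here holds, for each \emph{fixed} $\yb$, only in expectation over the random indices $i_k,j_k$ and the oracle (the telescoping identities hold only after taking these expectations), whereas the $\yb$ that witnesses $\|\Lb\bar\xb^N\|$ is a function of the random output $\bar\xb^N$; since $\sup_{\yb}\bbe[\cdot]\le\bbe[\sup_{\yb}(\cdot)]$ goes the wrong way, you cannot plug that random $\yb$ into the fixed-$\yb$ bound, and the ball argument as stated does not yield $\bbe\|\Lb\bar\xb^N\|$. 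The paper avoids this by \emph{not} completing the square in $\yb$: the remainder in \eqref{bnd_Q_aasdcs} is kept linear, $\bbe\langle\vb,\yb\rangle$ with the explicit random vector $\vb=\hat\theta_0\Lb\xb^0+\hat\theta_N\Lb(\hat{\underline\xb}^N-\xb^{N-1})+m\hat\theta_1\tau_1\yb^N$ of \eqref{def_resi_aasdcs}; one then bounds $\bbe\|\vb\|$ using the separate second-moment estimates \eqref{bnd_xyn_aasdcs} on $\|\hat{\underline\xb}^N-\xb^{N-1}\|^2$ and $\|\yb^*-\yb^N\|^2$ together with Jensen's inequality (this is precisely where the square root in $\Delta_{\xb^0,{\cal D}}$ originates), and finally invokes the perturbed-gap criterion of Proposition~\ref{prop:approx} to read off both the optimality gap and the feasibility residual. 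Your ``bootstrapping via a bound on the dual iterate norm'' gestures at the right quantities, but without the linear-in-$\yb$ perturbation vector $\vb$ and Proposition~\ref{prop:approx} (or an equivalent device) the conversion step as you describe it does not go through in the stochastic, randomized-block setting.
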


In view of Theorem~\ref{main_aasdcs}, letting ${\cal D}={\cal O} (m^2d_{max})$, we can see that the total number of inter-node communication rounds and intra-node (sub)gradient evaluations required by \algtwo~for finding a stochastic $\epsilon$-solution of \eqref{eqn:prob} can be bounded by 
\beq\label{comp_aasdcs}
{\cal O}\left\{\tfrac{md_{max}\Delta_{\xb^0,{\cal D}}}{\epsilon}\right\} \ \mbox{and} \ 
{\cal O}\left\{\tfrac{(M^2+\sigma^2)\Delta_{\xb^0,{\cal D}}^2}{\epsilon^2d_{max}^2}+ \sqrt{\tfrac{m({\cal C} +L)}{d_{max}}}\tfrac{\Delta_{\xb^0,{\cal D}}}{\epsilon}\right\},
\eeq
respectively.
It also needs to be emphasized that the sampling complexity (second bound in \eqref{comp_aasdcs}) only sublinearly depends on the Lipschitz constant $L$. 

Now consider the case when $f_i$'s are strongly convex (i.e.,  $\mu > 0$ in \eqref{eqn:nonsmooth}).
The following theorem instantiates Algorithm~\ref{alg:AASDCS} by providing a selection of $\{\alpha_k\}$, $\{\tau_k\}$, $\{\eta_k\}$ and $\{T_k\}$, which leads to a improved ${\cal O}(1/\sqrt \epsilon)$ complexity bound for the functional optimality gap and also the feasibility residual to obtain a stochastic $\epsilon$-solution of \eqref{eqn:prob}.

\begin{theorem}\label{main_aasdcs_s}
Let $\xb^*$ be an optimal solution of \eqref{eqn:prob}, and $d_{max}$ be the maximum degree of graph $\Gc$, and suppose that the parameters $\{\lambda_t\}$ and $\{\beta_t\}$ in the \inner~procedure of Algorithm~\ref{alg:AASDCS} be set to \eqref{para_sgd1},
and 
$\{\alpha_k\}$, $\{\tau_k\}$, $\{\eta_k\}$ and $\{T_k\}$ are set to 
\begin{align}\label{para_aasdcs_s}
\alpha_k &= \tfrac{k+3m-1}{k+3m},\ \eta_k = \tfrac{(k+3m-1)\mu}{2}- \tfrac{\mathcal{C}+L}{T_k(T_k+1)},\ \tau_k=\tfrac{32md^2_{max}}{(k+3m)\mu},\nn\\
\mbox{and} \ T_k&=\max\left\{\left\lceil \tfrac{64m(M^2+\sigma^2)N}{\mathcal{D}\mu^2}\right\rceil, \left\lceil \sqrt{\tfrac{4(\mathcal C+L)}{(k+3m-3)\mu}}\right\rceil\right\}, \ \forall k= 1,\dots,N.
\end{align}
Then, for any $N\ge 1$, we have
\begin{align}\label{bnd_aasdcs_s}
\bbe\{F(\bar\xb^N)-F(\xb^*)\}&\le {\cal O}\left\{\tfrac{m^2\Delta_{\xb^0,{\cal D},\mu}}{m^2+N^2}\right\}, \ \ 
\bbe\{\|\Lb\bar\xb^N\|\}\le {\cal O}\left\{\tfrac{m^2\Delta_{\xb^0,{\cal D},\mu}}{m^2+N^2}\right\},
\end{align}
where $\bar\xb^N=\tfrac{2}{6m^2+N(N+6m+1)}(\tsum_{k=0}^{N-1}(k+2m+1)\underline \xb^k+ m(N+3m)\underline \xb^N)$, $\{\underline \xb^k\}$ is generated by Algorithm~\ref{alg:AASDCS}, and $\Delta_{\xb^0,{\cal D},\mu}: = \max\Big\{C_{\xb^0,{\cal D},\mu}, \|\Lb\xb^0\|+ \tfrac{d^2_{max}\|\yb^*\|}{\mu}+ d_{max}\sqrt{\tfrac{C_{\xb^0,{\cal D},\mu}+\la \Lb\xb^0,\yb^*\ra}{\mu}}
\Big\}$ with $C_{\xb^0,{\cal D},\mu} = F(\xb^0)-F(\xb^*)+m\mu\mathbf{V}(\xb^0,\xb^*) + \tfrac{{\cal D}\mu}{m^2}$.
\end{theorem}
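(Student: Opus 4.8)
\textbf{Proof proposal for Theorem~\ref{main_aasdcs_s}.}

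My plan is to follow the same two-level template that underlies Theorem~\ref{main_aasdcs}, but now carrying the strong-convexity modulus $\mu$ through every estimate so that the outer weights can be chosen to grow linearly in $k$ (this is what produces the $\mathcal{O}(1/\sqrt\epsilon)$ communication rate). First I would analyze the \inner~procedure: since it is an AC-SA run on the subproblem $\Phi^k$ whose objective is $(\mu+\eta_k)$-strongly convex relative to $V_i$ and has a smooth part with constant $\mathcal{C}+L$, the choice $\lambda_t = 2/(t+1)$, $\beta_t = 4(\mathcal{C}+L)/(t(t+1))$ yields, after a standard telescoping of the AC-SA recursion, a bound of the shape
\begin{align*}
(\mu+\eta_k)\,\bbe[V_i(\underline u^{T_k}, u)] + \big(\Phi^k(\underline u^{T_k}) - \Phi^k(u)\big)
\le \tfrac{\beta_{T_k}+(1-\lambda_{T_k})(\mu+\eta_k)}{?}\,V_i(u^0,u) + \tfrac{?(M^2+\sigma^2)}{?\,T_k}
\end{align*}
for every $u\in X_i$, where the residual term collects both the subgradient-error contribution $M^2$ and the stochastic-noise contribution $\sigma^2$ and scales like $1/T_k$ (here I am using \eqref{eqn:nonsmooth}, \eqref{assume:unbiased}, \eqref{assume:sm_bounded}, and the quadratic-growth condition \eqref{eqn:proxquad}). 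The key feature I want is that the leading coefficient in front of $V_i(u^0,u)$ is essentially $\eta_k + \mathcal{C}+L)/(T_k(T_k+1))$-ish, which is exactly why $\eta_k$ is defined with the correction term $-(\mathcal{C}+L)/(T_k(T_k+1))$ in \eqref{para_aasdcs_s}: that correction makes the effective prox-coefficient come out to a clean value matching the outer-loop bookkeeping.

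Next I would plug this inner bound into the outer doubly-randomized primal-dual iteration, exactly paralleling the proof of Theorem~\ref{main_adpd} / Theorem~\ref{main_aasdcs}. Concretely, I would take conditional expectations over $(i_k,j_k)$ to turn the single activated primal and dual updates into full-vector updates scaled by $1/m$, establish a one-step inequality that couples the primal gap, the dual gap, and the prox-distances $\mathbf{V}(\xb^k,\xb^*)$ and $\|\yb^k-\yb^*\|^2$, and then telescope using the weights $\theta_k \propto (k+2m+1)$ implicit in the definition of $\bar\xb^N$. The parameter relations $\eta_k \approx (k+3m-1)\mu/2$, $\tau_k = 32md_{max}^2/((k+3m)\mu)$, $\alpha_k = (k+3m-1)/(k+3m)$ are chosen so that the cross terms from the Lagrangian coupling $\la\Lb\xb,\yb\ra$ cancel in the telescoping sum (the standard ``$\alpha_k\theta_k \tau_{k-1} = \theta_{k-1}\tau_k$''-type matching, here complicated by the factor $m$ from node activation and by the $m\underline x^{k-1}-(m-1)\underline x^{k-2}-x^{k-2}$ extrapolation in \eqref{eqn:algo1-acs}). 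After telescoping, the residual stochastic terms sum to something controlled by $\sum_k \theta_k (M^2+\sigma^2)/T_k$; the first branch of the $T_k$ formula, $T_k \gtrsim m(M^2+\sigma^2)N/(\mathcal D\mu^2)$, is precisely calibrated so that this sum is $\mathcal{O}(\mathcal D \mu)$, while the second branch $T_k\gtrsim\sqrt{(\mathcal C+L)/((k+3m-3)\mu)}$ controls the accelerated smooth contribution; together they give the claimed sampling complexity.

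The last step is to convert the resulting bound on a \emph{saddle-point gap function} $\mathbf{Q}(\bar\xb^N,\bar\yb^N; \xb^*,\yb)$ into the two separate statements $\bbe[F(\bar\xb^N)-F(\xb^*)]$ and $\bbe[\|\Lb\bar\xb^N\|]$. I would do this by the usual trick: maximizing the gap over $\yb$ in a ball of radius $2\|\yb^*\|$ (or using the perturbation argument from \cite{lan2017communication}) bounds $\|\Lb\bar\xb^N\|$, and using $\yb=\0b$ together with feasibility of $\xb^*$ bounds the primal gap, both in terms of the composite constant $\Delta_{\xb^0,\mathcal D,\mu}$ — whose somewhat baroque form (with the $d_{max}^2\|\yb^*\|/\mu$ term and the square-root term) is exactly what falls out of completing the square in $\|\yb^N-\yb^*\|^2$ against the linear-in-$\yb^*$ terms generated by $\tau_k\sim 1/((k+3m)\mu)$. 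Dividing by the total weight $6m^2+N(N+6m+1) = \Theta(m^2+N^2)$ gives \eqref{bnd_aasdcs_s}. I expect the main obstacle to be the bookkeeping in the telescoping of the coupled primal-dual recursion with the time-varying $\eta_k,\tau_k,\alpha_k$: one must verify that, with the inexact (AC-SA) primal solves replacing exact prox-steps, the accumulated inner-loop errors still fit inside the telescoping without destroying the weight-matching identities — in particular checking that the $-(\mathcal C+L)/(T_k(T_k+1))$ term in $\eta_k$ keeps $\eta_k$ nonnegative for all $k\ge1$ under the second branch of the $T_k$ rule, and that the stochastic error terms telescope with the correct weights rather than accumulating an extra factor of $N$.
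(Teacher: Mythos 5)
Your plan follows essentially the same route as the paper: an AC-SA analysis of the \inner~subproblem (Proposition~\ref{prop:inner}, producing the $\tfrac{m(M^2+\sigma^2)}{(\eta_k+\mu)(T_k+1)}$ error term), an outer doubly-randomized primal-dual telescoping under the weight-matching conditions $\hat\theta_k\tau_k=\hat\theta_{k-1}\tau_{k-1}$, $\alpha_k\hat\theta_k=\hat\theta_{k-1}$, etc., with $\hat\theta_k\propto(k+3m)$ so that the output weights become $(k+2m+1)$ (Proposition~\ref{prop_bndQ_aasdcs_s}), followed by the Monteiro--Svaiter-type perturbed gap function $g(\vb,\cdot)$ of Proposition~\ref{prop:approx} (rather than a dual ball) to separate the optimality gap from the feasibility residual via bounds on $\bbe\|\hat{\underline\xb}^N-\xb^{N-1}\|^2$ and $\bbe\|\yb^*-\yb^N\|^2$. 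The only cosmetic slip is that the negative prox term in the inner bound sits at the last iterate $u^{T_k}$ (i.e., $x_i^k$), not at $\underline u^{T_k}$ --- which is precisely why both sequences appear in the extrapolation \eqref{eqn:algo1-acs}, as you note --- so your outline is consistent with the paper's proof.
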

As a consequence of Theorem~\ref{main_aasdcs_s}, letting ${\cal D }= {\cal O}(m^3)$, we can see that the total number of inter-node communication rounds and intra-node (sub)gradient evaluations required by \algtwo~for finding a stochastic $\epsilon$-solution of \eqref{eqn:prob}, respectively, can be bounded by 
\beq\label{comp_aasdcs_s}
{\cal O}\left\{md_{max}\sqrt{\tfrac{\Delta_{\xb^0,{\cal D},\mu}}{\epsilon}}\right\}, \ \mbox{and} \ 
{\cal O}\left\{\tfrac{(M^2+\sigma^2)\Delta_{\xb^0,{\cal D},\mu}}{\mu^2\epsilon} + \sqrt{\tfrac{m({\cal C}+L)}{\mu}}\left(\tfrac{\Delta_{\xb^0,{\cal D},\mu}}{\epsilon}\right)^{1/4}\right\}.
\eeq

\section{Numerical experiments}\label{sec:exp}
We demonstrate the advantages of our proposed \algtwo~ method over the state-of-art synchronous algorithm, stochastic decentralized communication sliding (SDCS) method, proposed in \cite{lan2017communication} through some preliminary numerical experiments. 

Let us consider the decentralized linear Support Vector Machines (SVM) model with the following hinge loss function
\beq\label{svm}
\max\{0,1-v\la x,u\ra\},
\eeq
where $(v,u)\in \bbr\times\bbr^d$ is the pair of class label and feature vector, and $x\in \bbr^d$ denotes the weight vector. 
We consider two types of stochastic decentralized linear SVM problems in this paper. 
For the convex case, we study $1$-norm SVM problem \cite{zhu20041,bradley1998feature} defined in \eqref{sto_svm},
while for the strongly convex case, we study $2$-norm SVM model defined in \eqref{dsc_svm}. 
Moreover, we use the Erhos-Renyi algorithm
\footnote{We implemented the Erhos-Renyi algorithm based on a MATLAB function written by Pablo Blider, which can be found in 
\url{https://www.mathworks.com/matlabcentral/fileexchange/4206}.}
to generate the underlying decentralized network.   Note that nodes with different degrees are drawn in different colors 
(cf. Figure\ref{network8}). 
We also used the real dataset named ``ijcnn1'' from LIBSVM\footnote{This real dataset can be downloaded from \url{https://www.csie.ntu.edu.tw/~cjlin/libsvmtools/datasets/}.} and drew $40,000$ samples from this dataset as our problem instance data to train the decentralized linear SVM model. These samples are evenly split over the network agents. For example, if we have $m=8$ nodes (or agents) in the decentralized network (see Figure~\ref{network8}), each network agent has $5,000$ samples. 

\begin{figure}[H]
\centering
\includegraphics[scale = 0.21]{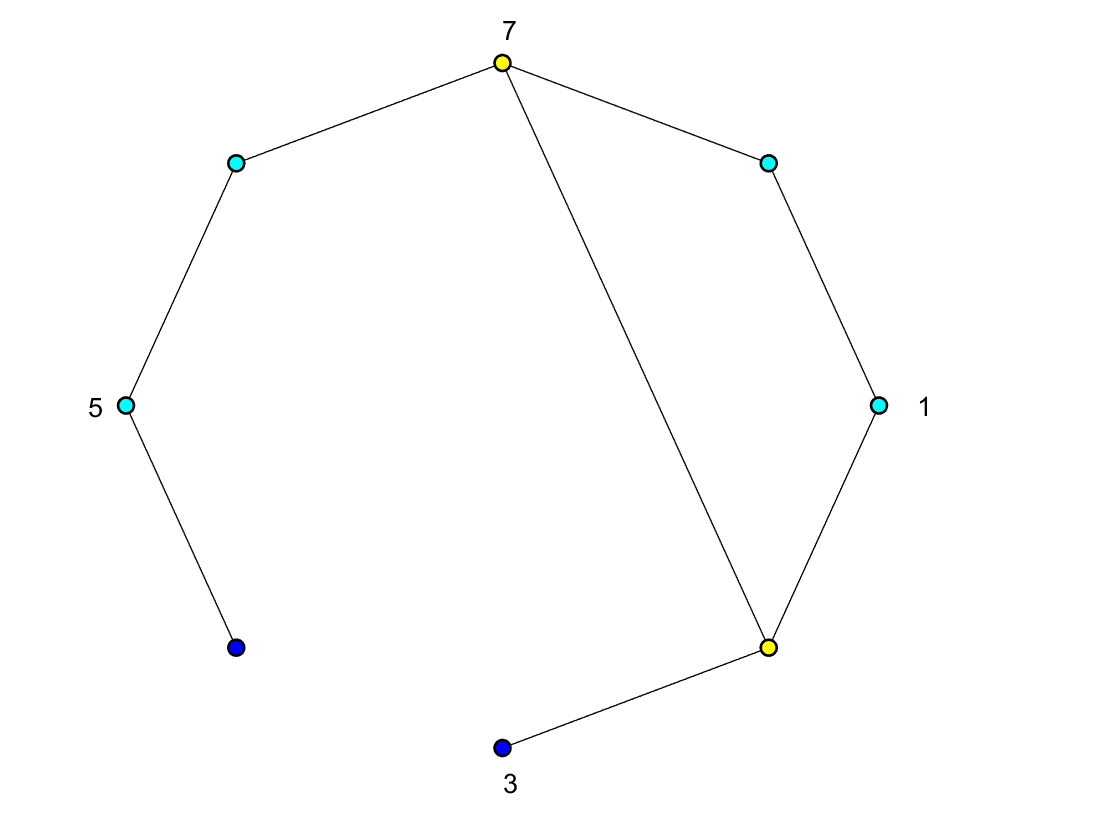}
\caption{\scriptsize The $8$-agent decentralized network randomly generated by Erhos-Renyi algorithm.}
\label{network8}
\end{figure}

With the same initial points $x^0=\0b$ and $y^0 = \0b$, we compare the performances of our algorithms with the SDCS method \cite{lan2017communication} for solving \eqref{eqn:orgprob}-\eqref{eqn:prob} by reporting the progresses of objective function values and feasibility residuals $\|\Lb\xb\|$ versus the elapsed CPU running time (in seconds) for solving the aforementioned two different types of problems. In all problem instances, we use $\|\cdot\|_2$ norm in both the primal and dual spaces, and hence in the parameter settings of SDCS $\|\Lb\|$ refers to the maximum eigenvalue of the Laplacian matrix $\cal L$. Moreover, all algorithms are implemented in MATLAB R2016a and run in the computer environment of with $32$-core (Intel(R) Xeon(R) CPU E5-2673 v3 \@ $2.40$GHz) virtual machine on Microsoft Azure. 
Since the underlying network has $8$ agents, we utilized the parallel toolbox in MATLAB to simulate the synchronous setting for SDCS. However, inter-node communication is instant and no delay is simulated in all experiments. In fact, such simulation setup is in favor of the synchronous methods, since these methods can be heavily slowed down by different processing speeds of the agents (cores) and inter-node communication speeds. 

\vgap
\noindent{\bf Convex case: decentralized $1$-norm SVM \cite{zhu20041,bradley1998feature}}
Consider a stochastic decentralized linear SVM problem defined over the $m$-agent decentralized network as
\begin{align}\label{sto_svm}
\min_{\xb} \tsum_{i=1}^m &\left[f_i(x_i):= \bbe_{(v_i,u_i)}[\max\{0,1-v_i\la x_i,u_i\ra\}] + \tfrac{1}{\|\mathcal{S}_i\|}\|x_i\|_1\right]\\
\text{s.t. } &~\Lb\xb = \0b,\nn
\end{align}
where 
$(v_i,u_i)$ represents a uniform random variable with support $\mathcal{S}_i$ and $\mathcal{S}_i$ denotes the dataset belonging to node $i$. We compare the performances of \algtwo~with SDCS for the decentralized network setups, $m=8$ (cf. R. Figure~\ref{network8}). For all problem instances, we choose the parameters of \algtwo~as in Theorem~\ref{main_aasdcs}. For SDCS, we choose parameters as suggested in \cite{lan2017communication}.
  \begin{figure}[H]
  \centering
  \includegraphics[scale = 0.2]{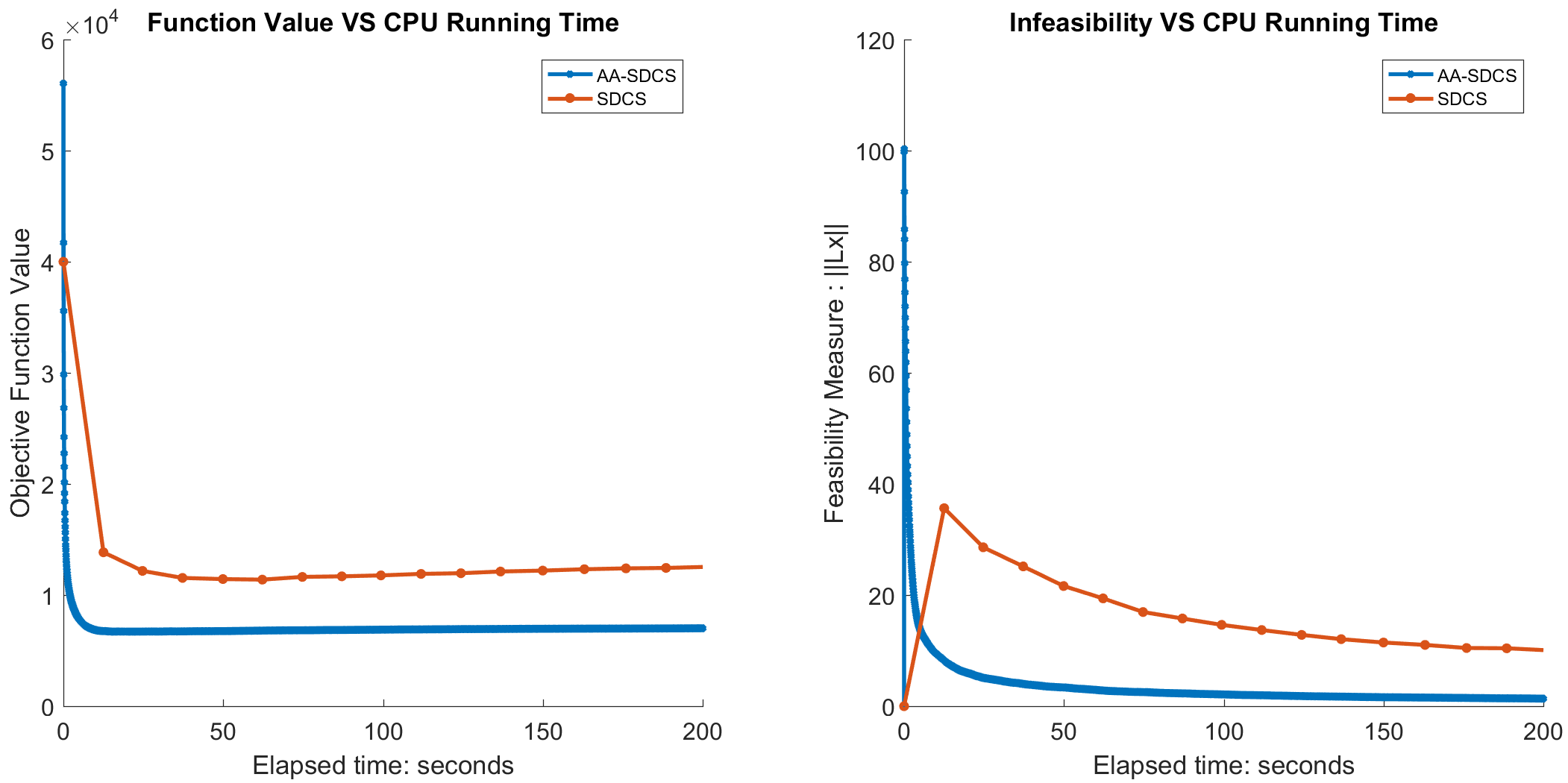}
  \caption{\scriptsize $1$-norm SVM defined over the $8$-agent decentralized network (cf. Figure~\ref{network8}), and we report the progresses of the objective function values on the left and the feasibility residuals on the right versus the elapsed CPU running time in seconds.}
  \label{l1g8}
  \end{figure}

In Figure~\ref{l1g8}, the vertical-axis of the left subgraph represents the objective function values, the vertical-axis of the right subgraph represents the feasibility measure $\|\Lb\xb\|$, and the horizontal-axis is the elapsed CPU running time in seconds. These numerical results are consistent with our theoretical analysis. We also need to emphasize that \algtwo~can significantly save CPU running time over SDCS in terms of both objective function values and feasibility residuals as shown in Figure~\ref{l1g8} even when each agent (Core) has the same processing speed. 

\vgap
\noindent{\bf Strongly convex case: decentralized $2$-norm SVM}
Consider a decentralized linear SVM problem with $l_2$ regularizer defined over the $m$-agent decentralized network as the following
\begin{align}\label{dsc_svm}
\min_{\xb} ~\tsum_{i=1}^m &\left[f_i(x_i):= \bbe_{(v_i,u_i)}[\max\{0,1-v_i\la x_i,u_i\ra\}] + \tfrac{1}{2|\mathcal{S}_i|}\|x_i\|^2_2\right]\\
\text{s.t. } &~\Lb\xb = \0b.\nonumber
\end{align}
We compare the performances of \algtwo~ with SDCS for the decentralized network setups, $m=8$ (cf. R. Figure~\ref{network8}). For all problem instances, 
we choose the parameters of \algtwo~ as in Theorem~\ref{main_aasdcs_s}. For SDCS, we choose parameters as suggested in \cite{lan2017communication}.
\begin{figure}[H]
  \centering
  \includegraphics[scale = 0.2]{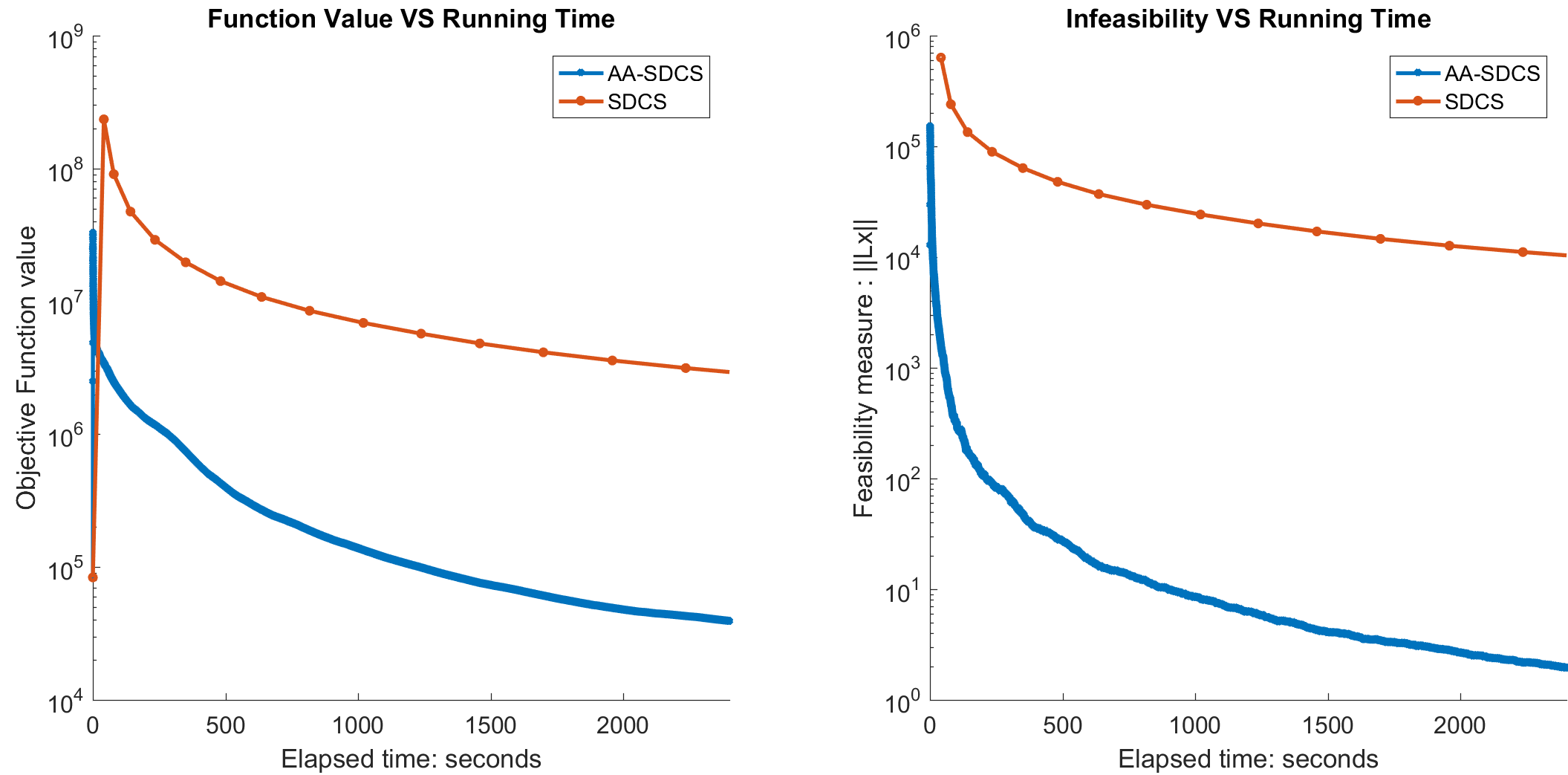}
  \caption{\scriptsize $2$-norm SVM defined over the $8$-agent decentralized network (cf. Figure~\ref{network8}), and we report the progresses of the objective function values on the left and the feasibility residuals on the right versus the elapsed CPU running time in seconds.}
  \label{l2g8}
  \end{figure}
  The above figures clearly show that \algtwo~can significantly save CPU running time over SDCS in terms of both objective function values and feasibility residuals. Moreover, comparing Figure~\ref{l2g8} with Figure~\ref{l1g8}, we can find out \algtwo~ obtains more improvements over SDCS for solving decentralized $2$-norm SVM problems than decentralized $1$-norm SVM problems. In fact, the decentralized $2$-norm SVM problem defined in \eqref{dsc_svm} has a composite objective structure that consists of a nonsmooth hinge loss function and a smooth strongly convex $l2$-regularizer, and the convergence results of \algtwo~ has a better dependence on the Lipschitz constant $L$, which
  indicates that it can obtain a faster convergence speed than SDCS for solving decentralized $2$-norm SVM problems.

\bibliographystyle{plain}
\bibliography{NIPS2018}
\newpage
\appendix
\section{Convergence analysis}\label{app:conv}
In this section, we provide detailed convergence analysis of \algone~(cf. Algorithm~\ref{alg:ADPD-i}) and \algtwo~(cf. Algorithm~\ref{alg:AASDCS}) presented in Section~\ref{sec:alg}.
\subsection{Some basic tools: gap functions, termination criteria and technical results}\label{sec:tools}
Given a pair of feasible solutions $\zb = (\xb,\yb)$  and $\bar{\zb} = (\bar{\xb},\bar{\yb})$ of \eqref{eqn:saddle},
we define the \textit{primal-dual gap function} $Q(\zb;\bar{\zb})$ by
\begin{align}\label{eqn:gap}
Q(\zb;\bar{\zb}) := &~F(\xb) + \la \Lb\xb, \bar{\yb} \ra - [F(\bar{\xb}) + \la \Lb\bar{\xb}, \yb \ra].
\end{align}
Sometimes we also use the notations $Q(\zb;\bar{\zb}) := Q(\xb, \yb;\bar{\xb},\bar{\yb})$
or $Q(\zb;\bar{\zb}) := Q(\xb,\yb;\bar{\zb}) = Q(\zb;\bar{\xb},\bar{\yb})$.
One can easily see that $Q(\zb^*;\zb) \le 0$ and $Q(\zb;\zb^*) \ge 0$ for all $\zb \in X^m \times \mathbb{R}^{md}$, where $\zb^* = (\xb^*,\yb^*)$ is a saddle point of \eqref{eqn:saddle}.
For compact sets $X^m \subset \mathbb{R}^{md}$, $Y \subset \mathbb{R}^{md}$, the gap function
\begin{align} \label{eqn:gapQ0}
\sup_{\bar{\zb}\in X^m \times Y} Q(\zb;\bar{\zb})
\end{align}
measures the accuracy of the approximate solution $\zb$ to the saddle point problem \eqref{eqn:saddle}.

However, the saddle point formulation \eqref{eqn:saddle} of our problem of interest \eqref{eqn:orgprob}
may have an unbounded feasible set.
We adopt the perturbation-based termination criterion by Monteiro and Svaiter \cite{Monteiro01,Monteiro02,Monteiro03}
and propose a modified version of the gap function in \eqref{eqn:gapQ0}.
More specifically, we define
\begin{align}\label{eqn:gapp}
g_Y(\vb,\zb) := \sup_{\bar{\yb}\in Y} Q(\zb;\xb^*,\bar{\yb}) - \la \vb, \bar{\yb}\ra,
\end{align}
for any closed set $Y \subseteq \mathbb{R}^{md}$, $\zb \in X^m\times \mathbb{R}^{md}$ and $\vb \in \bbr^{md}$.
If $Y = \mathbb{R}^{md}$, we omit the subscript $Y$ and simply use the notation $g(\vb,\zb)$.
This perturbed gap function allows us to bound the objective function value and the feasibility separately.

In the following proposition, we adopt a result from \cite[Proposition 2.1]{GL-AADMM} to describe the relationship between the perturbed gap function \eqref{eqn:gapp} and the approximate solutions (see Definition~\ref{def_solution}) to problem \eqref{eqn:prob}.

\begin{proposition}\label{prop:approx}
For any $Y \subset \mathbb{R}^{md}$ such that $\0b \in Y$,
if $g_Y(\Lb\xb,\zb) \le \epsilon < \infty$ and $\|\Lb\xb\| \le \delta$, where $\zb = (\xb,\yb) \in X^m \times \mathbb{R}^{md}$, then $\xb$ is an $(\epsilon,\delta)$-solution of \eqref{eqn:prob}.
In particular, when $Y = \mathbb{R}^{md}$, for any $\vb$ such that $g(\vb,\zb) \le \epsilon < \infty$
and $\|\vb\|\le \delta$, we always have $\vb = \Lb\xb$.
\end{proposition}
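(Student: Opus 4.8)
The plan is to unwind the definition of the perturbed gap function $g_Y(\vb,\zb)$ in \eqref{eqn:gapp} and exploit the elementary properties $Q(\zb^*;\zb)\le 0$, $Q(\zb;\zb^*)\ge 0$ of the primal-dual gap at the saddle point $\zb^*=(\xb^*,\yb^*)$. The key observation is that $g_Y(\Lb\xb,\zb)$ already has the perturbation vector $\vb=\Lb\xb$ baked in, so the supremum over $\bar\yb\in Y$ telescopes: for any fixed $\bar\yb$ we have
\begin{align*}
Q(\zb;\xb^*,\bar\yb) - \la \Lb\xb,\bar\yb\ra
&= F(\xb) + \la\Lb\xb,\bar\yb\ra - [F(\xb^*) + \la\Lb\xb^*,\yb\ra] - \la\Lb\xb,\bar\yb\ra\\
&= F(\xb) - F(\xb^*) - \la\Lb\xb^*,\yb\ra,
\end{align*}
which is independent of $\bar\yb$. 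Since $\Lb\xb^*=\0b$ (as $\xb^*$ is feasible for \eqref{eqn:prob}), this equals $F(\xb)-F(\xb^*)$. Thus $g_Y(\Lb\xb,\zb)=F(\xb)-F(\xb^*)$ whenever $Y$ is nonempty, and the hypothesis $g_Y(\Lb\xb,\zb)\le\epsilon$ immediately gives the primal residual bound $F(\xb)-F(\xb^*)\le\epsilon$. Combined with $\|\Lb\xb\|\le\delta$, this is exactly the statement that $\xb$ is an $(\epsilon,\delta)$-solution in the (deterministic) sense of Definition~\ref{def_solution}.

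For the lower-bound half — showing $F(\xb)-F(\xb^*)$ is automatically $\ge 0$ minus a controlled term, or rather confirming the criterion is meaningful — I would invoke the saddle-point inequality: since $(\xb^*,\yb^*)$ is a saddle point and $\xb\in X^m$, weak duality gives $F(\xb)+\la\Lb\xb,\yb^*\ra\ge F(\xb^*)$, but more directly one can use $Q(\xb,\yb^*;\xb^*,\yb^*)\ge 0$. This part is essentially bookkeeping and follows the template of \cite[Proposition 2.1]{GL-AADMM}, which the excerpt explicitly says we may cite.

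The second assertion, the case $Y=\mathbb{R}^{md}$, is where the only real subtlety lies: one must show that if $g(\vb,\zb)\le\epsilon<\infty$ then necessarily $\vb=\Lb\xb$. The argument is that $g(\vb,\zb)=\sup_{\bar\yb\in\mathbb{R}^{md}} Q(\zb;\xb^*,\bar\yb)-\la\vb,\bar\yb\ra = \sup_{\bar\yb} \big[F(\xb)-F(\xb^*)-\la\Lb\xb^*,\yb\ra + \la\Lb\xb-\vb,\bar\yb\ra\big]$, and the linear term $\la\Lb\xb-\vb,\bar\yb\ra$ over the unbounded set $\mathbb{R}^{md}$ is $+\infty$ unless $\Lb\xb-\vb=\0b$. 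So finiteness of $g(\vb,\zb)$ forces $\vb=\Lb\xb$, and then the first part applies with $\delta=\|\vb\|$. The main obstacle is just being careful with the direction of the perturbation and the sign conventions in \eqref{eqn:gapp} versus \eqref{eqn:gap}; once the telescoping cancellation of the $\bar\yb$-terms is spotted, everything collapses to weak duality and the coercivity-in-$\bar\yb$ remark, so I would keep the write-up short and defer the routine sign-checking to the cited reference.
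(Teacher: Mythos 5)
Your argument is correct and is essentially the paper's own: the paper simply cites \cite[Proposition 2.1]{GL-AADMM} and, in the remark following the proposition, records exactly your computation that $Q(\zb;\xb^*,\bar\yb)-\la\vb,\bar\yb\ra = F(\xb)-F(\xb^*)+\la\Lb\xb-\vb,\bar\yb\ra$ (using $\Lb\xb^*=\0b$), so that with $\vb=\Lb\xb$ the supremum collapses to the primal gap, while for $Y=\mathbb{R}^{md}$ finiteness of the supremum over the unbounded dual space forces $\vb=\Lb\xb$. Your extra ``lower-bound half'' paragraph is unnecessary, since Definition~\ref{def_solution} only asks for the one-sided bound $F(\xb)-F(\xb^*)\le\epsilon$ together with $\|\Lb\xb\|\le\delta$, but it does no harm.
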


Although the proposition was originally developed for deterministic cases, the extension of this to stochastic cases is straightforward. In fact, if we define $g_Y(\vb,\zb)$ as follows
\begin{align*}
g_Y(\vb,\zb) := \sup_{\bar{\yb}\in Y} \bbe[Q(\zb;\xb^*,\bar{\yb}) - \la \vb, \bar{\yb}\ra],
\end{align*}
from the definition of $Q$, we have $g_Y(\vb,\zb)=\sup_{\bar \yb\in Y}\bbe[F(\xb)-F(\xb^*)-\langle \Lb\xb-\vb,\bar \yb\rangle]$ due to $\Lb\xb^*=0$. Therefore, when $Y=\bbr^{md}$, the results in Proposition~\ref{prop:approx} holds, since $g_Y(\vb,\zb)$ is bounded for any $\bar \yb\in Y$.

We also define some auxiliary notations which play important roles in the convergence analysis. Let $\hat{\xb}^k$, $\hat{\yb}^k$, $\hat \xb^k_{+}$ and $\hat{\underline \xb}^k$ be defined as follows, $\forall t=1,\dots,k$
\begin{align}
\hat{\xb}^k&=\argmin_{\xb\in X^m}\langle \Lb\tilde{\yb}^k, \xb\rangle+F(\xb)+\eta_t\mathbf{V}(\xb^{k-1},\xb),\label{def_xhat}\\
\hat{\yb}^k&=\yb^{k-1} + \tfrac{1}{\tau_k}\Lb\tilde \xb^k,\label{def_yhat}\\ 
(\hat \xb^k_{+},\hat{\underline \xb}^k) &= \inner(F,X^m,\mathbf{V},T_k,\eta_k,\Lb\tilde \yb^k,\xb^{k-1}),\label{def_xhatu} 
\end{align}
and $\hat{\xb}^0= \hat \xb^k_{+} = \hat{\underline \xb}^0=\xb^0, \hat{\yb}^0=\yb^0=\0b$. Note that some notations may be abused in the above definitions, since $\xb^k, \tilde \yb^k, \yb^k, \tilde \xb^k$ can be generated by both Algorithm~\ref{alg:ADPD-i} and Algorithm~\ref{alg:AASDCS}. However, these definitions become clear when we refer to them in the convergence analysis of certain algorithm. For example, when we refer to $\hat \xb^k$ in the convergence analysis of Algorithm~\ref{alg:ADPD-i}, notations $\tilde \yb^k$ and $\xb^{k-1}$ in its definition clearly refer to \eqref{eqn:algo4-i} and \eqref{eqn:algo6-i} in Algorithm~\ref{alg:ADPD-i}. 

The following lemma below characterizes the solution of the primal and dual projection steps
\eqref{eqn:algo6-i}, \eqref{eqn:algo3-i}, \eqref{eqn:algo3-acs} (also \eqref{def_xhat}, \eqref{def_yhat}) as well as the projection in inner loop \eqref{eqn:inner2}. The proof of this result can be found in Lemma 2 of \cite{GhaLan10-1}.

\begin{lemma}\label{lem:optimality}
  Let the convex function $q: U \to \mathbb{R}$, the points $\bar{x},\bar{y}\in U$
  and the scalars $\mu_1,\mu_2 \in \mathbb{R}$ be given. Let $\omega: U\rightarrow \bbr$ be a differentiable convex function and $V(x,z)$ be defined in \eqref{eqn:def_V}. If
  \begin{align*}
  u^* \in \argmin\left\{ q(u) + \mu_1V(\bar{x}, u) + \mu_2V(\bar{y},u): u\in U\right\},
  \end{align*}
  then for any $u \in U$, we have
  \begin{align*}
  q(u^*) +\mu_1V(\bar{x},u^*) + \mu_2V(\bar{y},u^*)
  &\le q(u) +\mu_1V(\bar{x},u)+ \mu_2V(\bar{y},u) - (\mu_1 + \mu_2)V(u^*,u).
  \end{align*}
\end{lemma}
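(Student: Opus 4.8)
\textbf{Proof plan for Lemma~\ref{lem:optimality}.}

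The plan is to derive the inequality directly from the first-order optimality condition of the minimization problem defining $u^*$, together with the convexity of $q$ and the three-point identity for the Bregman distance $V$. First I would write down the optimality condition: since $u^*$ minimizes the (convex) objective $q(u) + \mu_1 V(\bar x, u) + \mu_2 V(\bar y, u)$ over the convex set $U$, there exists a subgradient $q'(u^*) \in \partial q(u^*)$ such that
\begin{align*}
\la q'(u^*) + \mu_1 (\nabla\o(u^*) - \nabla\o(\bar x)) + \mu_2(\nabla\o(u^*) - \nabla\o(\bar y)), u - u^*\ra \ge 0, \quad \forall u \in U,
\end{align*}
where I have used that $\nabla_u V(\bar x, u) = \nabla\o(u) - \nabla\o(\bar x)$ from the definition \eqref{eqn:def_V}.

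Next I would invoke the key algebraic fact about Bregman distances, the so-called three-point (or generalized triangle) identity: for any $x, z, u$,
\begin{align*}
\la \nabla\o(u^*) - \nabla\o(\bar x), u - u^*\ra = V(\bar x, u) - V(\bar x, u^*) - V(u^*, u),
\end{align*}
and similarly with $\bar y$ in place of $\bar x$. Substituting these two identities into the optimality condition converts the gradient terms into differences of $V$-values, yielding
\begin{align*}
\la q'(u^*), u - u^*\ra + \mu_1[V(\bar x, u) - V(\bar x, u^*) - V(u^*,u)] + \mu_2[V(\bar y, u) - V(\bar y, u^*) - V(u^*,u)] \ge 0.
\end{align*}
Finally, convexity of $q$ gives $q(u^*) + \la q'(u^*), u - u^*\ra \le q(u)$, so $\la q'(u^*), u-u^*\ra \ge q(u^*) - q(u)$ is false in the needed direction — rather I use $q(u) \ge q(u^*) + \la q'(u^*), u - u^*\ra$, i.e. $\la q'(u^*), u - u^*\ra \le q(u) - q(u^*)$. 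Combining this bound with the displayed inequality (moving the $q$-inner-product term across and using the convexity bound to replace it) and rearranging produces exactly
\begin{align*}
q(u^*) + \mu_1 V(\bar x, u^*) + \mu_2 V(\bar y, u^*) \le q(u) + \mu_1 V(\bar x, u) + \mu_2 V(\bar y, u) - (\mu_1+\mu_2) V(u^*, u).
\end{align*}

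The only genuinely delicate point is the direction of the convexity inequality and making sure the subgradient term is eliminated correctly: one must be careful that $\la q'(u^*), u - u^*\ra$ appears with the right sign so that the convexity estimate $q(u) - q(u^*) \ge \la q'(u^*), u-u^*\ra$ can be applied to weaken (not strengthen) the inequality. Everything else is a routine substitution of the two three-point identities. Since this is precisely Lemma~2 of \cite{GhaLan10-1}, I would simply cite that reference for the full details and present the above as the sketch.
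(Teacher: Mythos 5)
Your sketch is correct: the first-order optimality condition for $u^*$, the three-point identity $\la \nabla\o(u^*)-\nabla\o(\bar x), u-u^*\ra = V(\bar x,u)-V(\bar x,u^*)-V(u^*,u)$ (and its analogue with $\bar y$), and the convexity bound $\la q'(u^*),u-u^*\ra \le q(u)-q(u^*)$ combine exactly as you describe to give the stated inequality, with the sign of the subgradient term handled in the right direction. This is essentially the same argument as the paper, which does not reprove the lemma but simply defers to Lemma 2 of \cite{GhaLan10-1}, whose proof is precisely this optimality-condition-plus-three-point-identity computation.
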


For any given weight sequence $\{\hat \theta_k\}$ such that $\hat \theta_k\ge 0,\ \ \sum_{k=0}^{N}\hat \theta_k=1$, let $\{\theta_k\}$ be defined as
\begin{align}\label{def_xweight}
\theta_k=
\begin{cases}
\hat \theta_0-(m-1)\hat \theta_1,\ \  & k=0,\\
m\hat \theta_k-(m-1)\hat \theta_{k+1}, \ \ & k=1, \dots, N-1,\\
m\hat \theta_N\ \ & k=N.
\end{cases}
\end{align}
Therefore, $\tsum_{k=0}^N\theta_k=\tsum_{k=0}^N\hat \theta_k = 1$.
In the following lemma, we provide some important relations that will be used later in the convergence analysis.

\begin{lemma}\label{lem_exp}
For weight sequence $\{\theta_k\}$ defined as in \eqref{def_xweight} and any $\xb\in X^m,\yb\in \bbr^{md}$, we have
\begin{align*}
\bbe_{[i_k,j_k]}\left\{\tsum_{k=0}^N\theta_k[F(\xb^k)-F(\xb)+\langle \Lb\xb^k,\yb\rangle]\right\} &= \bbe_{[i_k,j_k]}\left\{\tsum_{k=1}^N\hat \theta_k[F(\hat\xb^k)-F(\xb)+\langle \Lb\hat\xb^k,\yb\rangle]\right\},\nn\\
\bbe_{[i_k,j_k]}\left\{\tsum_{k=0}^N\theta_k[F(\underline \xb^k)-F(\xb)+\langle \Lb\underline \xb^k,\yb\rangle]\right\} &= \bbe_{[i_k,j_k]}\left\{\tsum_{k=0}^N\hat \theta_k[F(\hat{\underline \xb}^k)-F(\xb)+\langle \Lb\hat {\underline\xb}^k,\yb\rangle]\right\},\\
\bbe_{[i_k,j_k]}\{\mathbf{V}(\hat \xb^k,\xb)\}&=\bbe_{[i_k,j_k]}\{m\mathbf{V}(\xb^k,\xb)-(m-1)\mathbf{V}(\xb^{k-1},\xb)\},\\
\bbe_{[i_k,j_k]}\{\mathbf{V}(\hat\xb^{k},\xb^{k-1})\}&=\bbe_{[i_k,j_k]}\{mV_{j_k}(x_{j_k}^k,x_{j_k}^{k-1})\},
\end{align*}
where $\bbe_{[i_k,j_k]}$ represents taking expectation over $i_1,j_1,\dots,i_k,j_k$, and $\xb^k$, $\hat \xb^k$, $\underline \xb^k$ and $\hat{\underline \xb}^k$ are defined in \eqref{eqn:algo6-i}
, \eqref{def_xhat}, \eqref{eqn:algo6-acs}, \eqref{def_xhatu} respectively. 
\end{lemma}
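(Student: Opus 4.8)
\textbf{Proof proposal for Lemma~\ref{lem_exp}.}

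The plan is to establish all four identities by unwinding the random activation structure of Algorithms~\ref{alg:ADPD-i} and~\ref{alg:AASDCS} and comparing, for each iteration $k$, the actual (randomized) update against the ``full'' update that would have been performed had every agent been activated. The key observation underlying every identity is this: conditioned on the history up to iteration $k-1$, agent $j$ is chosen as the activated primal agent $j_k$ with probability $1/m$, so $\bbe[\xb^k \mid \text{history}, \text{all but } j_k] $ is a convex combination with weight $1/m$ on each ``one-agent-updated'' outcome and the remaining mass on the unchanged iterate. Concretely, for the one-agent update in \eqref{eqn:algo6-i}, $\bbe_{j_k}[\xb^k] = \tfrac{1}{m}\hat\xb^k + \tfrac{m-1}{m}\xb^{k-1}$, where $\hat\xb^k$ is the full update defined in \eqref{def_xhat} (and analogously $\bbe_{j_k}[\underline\xb^k] = \tfrac{1}{m}\hat{\underline\xb}^k + \tfrac{m-1}{m}\underline\xb^{k-1}$ in the sliding case, using \eqref{def_xhatu}). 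The same structure with the dual variable and $i_k$ underlies the appearance of the factor $m$ in \eqref{def_yhat} and the extrapolation steps \eqref{eqn:algo4-i}, \eqref{eqn:algo4-acs}.

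First I would prove the third and fourth identities, which are the cleanest. For the third, apply the conditional expectation identity $\bbe_{j_k}[\xb^k] = \tfrac{1}{m}\hat\xb^k + \tfrac{m-1}{m}\xb^{k-1}$; rearranging gives $\hat\xb^k = m\,\bbe_{j_k}[\xb^k] - (m-1)\xb^{k-1}$, and since $\mathbf{V}(\cdot,\xb)$ is \emph{affine} in its first argument once we fix $\xb$ (indeed $\mathbf{V}(\zb,\xb) = \o(\xb) - \o(\zb) - \la\nabla\o(\zb),\xb-\zb\ra$ is \emph{not} affine in $\zb$ — so here one must be careful). In fact the correct route is: $\hat\xb^k$ and $\xb^k$ agree except on the block $j_k$, where $\hat x^k_{j_k}$ is exactly the minimizer \eqref{eqn:algo6-i}; so $\mathbf{V}(\hat\xb^k,\xb) = \mathbf{V}(\xb^{k-1},\xb) - V_{j_k}(x^{k-1}_{j_k},\xb_{j_k}) + V_{j_k}(\hat x^k_{j_k},\xb_{j_k})$, and taking $\bbe_{j_k}$ together with $\bbe_{j_k}[V_{j_k}(\hat x^k_{j_k},\xb_{j_k})] = \tfrac1m\sum_j V_j(\hat x^k_j,\xb_j)$ reproduces the claimed formula after recognizing $\sum_j V_j(\hat x^k_j,\xb_j) = \mathbf{V}(\hat\xb^k,\xb)$ where now $\hat\xb^k$ denotes the full-update point. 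The fourth identity follows the same way with $\xb$ replaced by $\xb^{k-1}$, noting $\mathbf{V}(\xb^{k-1},\xb^{k-1})=0$ so only the $j_k$-block survives, giving $\bbe_{j_k}[\mathbf{V}(\hat\xb^k,\xb^{k-1})] = \tfrac1m\sum_j V_j(\hat x^k_j, x^{k-1}_j)$; but one needs $\hat\xb^k$ on the left to have its $j$-block be $\hat x^k_j$ for the activated choice, and then the stated identity $\bbe_{[i_k,j_k]}\{\mathbf{V}(\hat\xb^k,\xb^{k-1})\} = \bbe_{[i_k,j_k]}\{m V_{j_k}(x^k_{j_k},x^{k-1}_{j_k})\}$ emerges because on the event $\{j_k = j\}$ we have $x^k_{j_k} = \hat x^k_j$. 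I would take a clean telescoping/conditioning approach here rather than the affine shortcut.

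Next I would handle the first identity (and the second, which is identical in structure with $\hat\xb^k,\xb^k$ replaced by $\hat{\underline\xb}^k,\underline\xb^k$). Here the weight bookkeeping \eqref{def_xweight} enters. The idea is to write $\sum_{k=0}^N \theta_k[F(\xb^k) - F(\xb) + \la\Lb\xb^k,\yb\ra]$ and push the randomness through: using $\bbe_{j_k}[\xb^k] = \tfrac1m\hat\xb^k + \tfrac{m-1}{m}\xb^{k-1}$ is not directly legal inside $F$ (which is nonlinear), so instead I would argue at the level of the \emph{event decomposition} — but the cleaner and intended argument is the \emph{Abel summation} identity: with $\theta_k$ defined from $\hat\theta_k$ via \eqref{def_xweight}, one verifies by direct substitution that $\sum_{k=0}^N \theta_k h(\xb^k) = \sum_{k=1}^N \hat\theta_k[m\,h(\xb^k) - (m-1)h(\xb^{k-1})]$ for any function $h$ (here $h(\zb) = F(\zb) - F(\xb) + \la\Lb\zb,\yb\ra$), because the coefficient of $h(\xb^k)$ on the right is $m\hat\theta_k - (m-1)\hat\theta_{k+1} = \theta_k$ for $1\le k\le N-1$, is $\hat\theta_0 - (m-1)\hat\theta_1 = \theta_0$ for $k=0$ (using $\xb^{-1}$ conventions, or noting $h(\xb^{-1})$ has coefficient $-(m-1)\hat\theta_1$ which pairs with $\hat\theta_0 h(\xb^0)$... here one uses $\xb^0 = \xb^{-1}$), and $m\hat\theta_N = \theta_N$ for $k=N$. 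Then take $\bbe_{[i_k,j_k]}$: conditioning on everything before iteration $k$, $\bbe_{j_k}[m\,h(\xb^k) - (m-1)h(\xb^{k-1})] = \bbe_{j_k}[h(\hat\xb^k)]$ because $m\cdot\tfrac1m h(\hat\xb^k) + m\cdot\tfrac{m-1}{m}h(\xb^{k-1}) - (m-1)h(\xb^{k-1}) = h(\hat\xb^k)$ — this time the nonlinearity of $h$ is harmless because the only randomization acting on the $j_k$-block replaces a deterministic-given-history quantity by one of $m$ equally likely outcomes, and $h(\xb^k)$ on the event $\{j_k=j\}$ equals $h$ of the point whose $j$-block is updated; averaging these $m$ points and the ``$(m-1)$ copies of the old point'' is exactly what produces the full-update contribution $h(\hat\xb^k)$, NOT $h$ of an average. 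Summing over $k$ and taking the tower property gives the first identity.

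\textbf{Main obstacle.} The delicate point — and the step I expect to require the most care — is justifying the passage from the per-coordinate randomized update to the full update \emph{inside the nonlinear functions $F$ and $\mathbf{V}(\cdot,\xb)$}: one cannot naively exchange expectation and a nonlinear map, so the argument must be phrased as a decomposition over the $m$ equally likely activation events $\{j_k = j\}$ (and, for the identities touching the dual, also over $\{i_k = i\}$), on each of which $\xb^k$ is an explicit deterministic-given-history point whose only non-trivial block is $j$. The combinatorial identity \eqref{def_xweight} is engineered precisely so that, after this event decomposition and the Abel-type resummation, the telescoping $m\cdot(\text{new}) - (m-1)\cdot(\text{old})$ collapses cleanly to the full-update term $h(\hat\xb^k)$ with coefficient $\hat\theta_k$. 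Secondary care is needed tracking the boundary terms at $k=0$ and $k=N$ (this is why the initialization $\xb^0 = \xb^{-1} = \underline\xb^0$ is imposed and why the averaged iterate $\bar\xb^N$ puts extra weight $m$ on the last iterate), and in the sliding case in ensuring the \inner~procedure's output $(x^k_{j_k}, \underline x^k_{j_k})$ is correctly matched to $(\hat\xb^k_+, \hat{\underline\xb}^k)$ via \eqref{def_xhatu} on the activation event.
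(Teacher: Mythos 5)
Your proposal is correct and takes essentially the same route as the paper's proof: the per-iteration conditional expectation over the uniformly chosen activated agent (the paper's relation \eqref{exp_jk}), combined with the $\theta_k$--$\hat\theta_k$ resummation from \eqref{def_xweight} and the block decomposition of $\mathbf{V}$ for the last two identities. Only minor tidying is needed: in the prox identities the quantity you decompose before taking $\bbe_{j_k}$ should be $\mathbf{V}(\xb^k,\xb)$ (the actual iterate), not $\mathbf{V}(\hat\xb^k,\xb)$; the collapse of $m\,h(\xb^k)-(m-1)h(\xb^{k-1})$ to $h(\hat\xb^k)$ in expectation rests on the block-separability of $F$ together with the linearity of $\la \Lb\cdot,\yb\ra$ (not merely on the $m$ equally likely outcomes); and the Abel-type identity carries an extra boundary term $\hat\theta_0 h(\xb^0)=\hat\theta_0 h(\hat\xb^0)$ rather than anything involving $\xb^{-1}$.
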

\begin{proof}
Note that by \eqref{def_xhat} and the fact that $j_k$ is chosen uniformly from $\{1,\dots,m\}$, we have
\begin{align}\label{exp_jk}
  &\bbe_{j_k}\{F(\xb^k)-F(\xb)+\langle \Lb\xb^k,\yb\rangle\}\nn\\
   &~= (1-\tfrac{1}{m})[F(\xb^{k-1})-F(\xb)+\langle \Lb\xb^{k-1},\yb\rangle]+\tfrac{1}{m}[F(\hat\xb^{k-1})-F(\xb)+\langle \Lb\hat\xb^{k-1},\yb\rangle].
\end{align}
Therefore, by \eqref{def_xweight} we obtain
\begin{align*}
\bbe_{[i_k,j_k]}&\left\{\tsum_{k=0}^N\theta_k[F(\xb^k)-F(\xb)+\langle \Lb\xb^k,\yb\rangle]\right\}\\
&=\bbe_{[i_k]}\bbe_{[j_k]}\{(\hat\theta_0-(m-1)\hat\theta_1)[F(\xb^0)-F(\xb)+\langle \Lb\xb^0,\yb\rangle]|[i_k]\}\\
& \quad +\bbe_{[i_k]}\bbe_{[j_k]}\left\{\tsum_{k=1}^{N-1}(m\hat\theta_k-(m-1)\hat\theta_{k+1})[F(\xb^k)-F(\xb)+\langle \Lb\xb^k,\yb\rangle|[i_k]]\right\}\\
& \quad + \bbe_{[i_k]}\bbe_{[j_k]}\left\{m\hat\theta_N[F(\xb^N)-F(\xb)+\langle \Lb\xb^N,\yb\rangle]|[i_k]\right\}\\
& = \bbe_{[i_k,j_k]}\left\{\hat\theta_0[F(\hat\xb^0)-F(\xb)+\langle \Lb\hat\xb^0,\yb\rangle]\right\}\\
&\quad + \bbe_{[i_k]}\bbe_{[j_k]}\left\{\tsum_{k=1}^Nm\hat\theta_k[F(\xb^{k})-F(\xb)+\langle \Lb\xb^{k},\yb\rangle]|[i_k]\right\}\\
&\quad - \bbe_{[i_k]}\bbe_{[j_k]}\left\{\tsum_{k=1}^N(m-1)\hat\theta_k[F(\xb^{k-1})-F(\xb)+\langle \Lb\xb^{k-1},\yb\rangle]|[i_k]\right\}\\
&= \bbe_{[i_k,j_k]}\left\{\tsum_{k=0}^N\hat\theta_k[F(\hat\xb^k)-F(\xb)+\langle \Lb\hat\xb^k,\yb\rangle]\right\},
\end{align*}
where the last equality is obtained by applying \eqref{exp_jk} and rearranging the terms. Similarly, in view of \eqref{def_xhatu}, 
we have
\begin{align*}
  &\bbe_{j_k}\{F(\underline \xb^k)-F(\xb)+\langle \Lb\underline \xb^k,\yb\rangle\}\nn\\
   &~= (1-\tfrac{1}{m})[F(\underline \xb^{k-1})-F(\xb)+\langle \Lb\underline \xb^{k-1},\yb\rangle]+\tfrac{1}{m}[F(\hat{\underline \xb}^{k-1})-F(\xb)+\langle \Lb\hat{\underline\xb}^{k-1},\yb\rangle],
\end{align*}
and hence 
the second identity follows from the same argument. Moreover, for any $\xb\in X^m$, $k\ge 1$, we have
\begin{align*}
\bbe_{[i_k,j_k]}\{\mathbf{V}(\xb^k,\xb)\}&=\bbe_{[i_k,j_k]}\left\{\tsum_{j=1}^mV_j(x_j^k,x_j)\right\}\\
&=\bbe_{[i_k,j_k]}\left\{\tfrac{1}{m}\mathbf{V}(\hat\xb^{k},\xb) +(1-\tfrac{1}{m})\mathbf{V}(\xb^{k-1},\xb)\right\},
\end{align*}
where the first equality follows from the definition of $\mathbf{V}(\cdot,\cdot)$, and the last equality follows by taking expectation on $j_k$. Similarly, we can obtain the last relation of the lemma.
\end{proof}
We define $\hat \yb^k$ (see \eqref{def_yhat}) and $\hat \xb_{+}^k$ (see \eqref{def_xhatu}) in a similar way as $\hat \xb^k$, and hence, following the same technique as in the above lemma, we can conclude
\begin{align*}
\bbe_{[i_k,j_k]}\{\tilde\yb^k\}&=\bbe_{[i_k,j_k]}\{\hat\yb^k\},\\
\bbe_{[i_k,j_k]}\{\|\yb-\hat\yb^k\|^2\}&=\bbe_{[i_k,j_k]}\{m\|\yb-\yb^k\|^2-(m-1)\|\yb-\yb^{k-1}\|^2\},\\
\bbe_{[i_k,j_k]}\{\|\yb^{k-1}-\hat\yb^{k}\|^2\}&=\bbe_{[i_k,j_k]}\{m\|y_{i_k}^{k-1}-y_{i_k}^{k}\|^2\},
\end{align*}
and 
\begin{align*}
\bbe_{[i_k,j_k]}\{\mathbf{V}(\hat \xb^k_{+},\xb)\}&=\bbe_{[i_k,j_k]}\{m\mathbf{V}(\xb^k,\xb)-(m-1)\mathbf{V}(\xb^{k-1},\xb)\},
\end{align*}
where $\xb^{k}$ (cf. \eqref{eqn:algo6-acs}) is generated by Algorithm~\ref{alg:AASDCS}.

\subsection{Convergence properties of Algorithm~\ref{alg:ADPD-i} (A-DPD)}
We now provide an important recursion relation of Algorithm~\ref{alg:ADPD-i} in the following lemma.
\begin{lemma}\label{lem_recursion}
Let the gap function $Q$ be defined as in \eqref{eqn:gap}, and $\bar \zb^N:=(\bar \xb^N,\bar \yb^N)=\tsum_{k=0}^N(\theta_k\xb^k,\hat\theta_k\hat\yb^k)$, where $\{\theta_k\}$ is a nonnegative sequence that satisfies \eqref{def_xweight}. Also let $\xb^k$ and $\yb^k$ be defined in \eqref{eqn:algo6-i} and \eqref{eqn:algo3-i}, respectively. Then for any $k\ge 1$, we have
\begin{align}\label{Q_rec}
\bbe_{[i_k,j_k]}\{Q(\bar\zb^N;\zb)\}&\le \hat\theta_0Q_0(\xb,\yb)  + \bbe_{[i_k,j_k]}\left\{\tsum_{k=1}^N\hat\theta_k\langle \Lb(m\xb^k-(m-1)\xb^{k-1}-\tilde\xb^k),\yb-\tilde\yb^{k}\rangle\right\} \nonumber\\
&\quad + \bbe_{[i_k,j_k]}\left\{\tsum_{k=1}^Nm\hat\theta_k\eta_k[\mathbf{V}(\xb^{k-1},\xb)-\mathbf{V}(\xb^k,\xb)-V_{j_k}(x_{j_k}^{k-1},x_{j_k}^k)]\right\}\nn\\
&\quad + \bbe_{[i_k,j_k]}\left\{\tsum_{k=1}^N\tfrac{m\hat\theta_k\tau_k}{2}[\|\yb-\yb^{k-1}\|^2-\|\yb-\yb^k\|^2-\|y_{i_k}^{k-1}-y_{i_k}^k\|^2]\right\},
\end{align}
where $Q_0(\xb,\yb)$ is defined as 
\beq\label{def_Q0}
Q_0(\xb,\yb):=F(\xb^0)-F(\xb)+\langle\Lb\xb^0,\yb\rangle.
\eeq
\end{lemma}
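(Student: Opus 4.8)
\textbf{Proof plan for Lemma~\ref{lem_recursion}.}

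The plan is to analyze a single iteration $k$ of Algorithm~\ref{alg:ADPD-i}, obtain a per-iteration bound on the primal-dual contributions, then take expectations and sum with the weights $\hat\theta_k$. First I would split the gap function $Q(\bar\zb^N;\zb)$ using convexity of $F$ and linearity of $\la\Lb\cdot,\cdot\ra$: since $\bar\zb^N=\sum_k(\theta_k\xb^k,\hat\theta_k\hat\yb^k)$, we get $Q(\bar\zb^N;\zb)\le\sum_k\theta_k[F(\xb^k)-F(\xb)+\la\Lb\xb^k,\yb\ra] - \sum_k\hat\theta_k[\la\Lb\xb,\hat\yb^k\ra - F(\xb)]$ (being careful with which argument carries which weight, noting $\sum\theta_k=\sum\hat\theta_k=1$). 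The first sum is exactly the object that Lemma~\ref{lem_exp} converts, in expectation, into $\sum_{k=1}^N\hat\theta_k[F(\hat\xb^k)-F(\xb)+\la\Lb\hat\xb^k,\yb\ra]$ plus the $\hat\theta_0 Q_0$ term coming from $k=0$. So after applying Lemma~\ref{lem_exp}, the whole left side reduces, in expectation, to $\hat\theta_0Q_0(\xb,\yb) + \bbe\{\sum_{k=1}^N\hat\theta_k[F(\hat\xb^k)-F(\xb)+\la\Lb\hat\xb^k,\yb\ra - \la\Lb\xb,\hat\yb^k\ra]\}$, which I then need to bound term by term.

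Next I would handle the primal term. Applying Lemma~\ref{lem:optimality} to the defining subproblem \eqref{def_xhat} of $\hat\xb^k$ (with $q(\cdot)=\la\Lb\tilde\yb^k,\cdot\ra+F(\cdot)$, $\mu_1=\eta_k$, $\mu_2=0$, $\bar x=\xb^{k-1}$) gives, for any $\xb\in X^m$,
\begin{align*}
F(\hat\xb^k)+\la\Lb\tilde\yb^k,\hat\xb^k\ra \le F(\xb)+\la\Lb\tilde\yb^k,\xb\ra + \eta_k[\mathbf{V}(\xb^{k-1},\xb)-\mathbf{V}(\hat\xb^k,\xb)-\mathbf{V}(\xb^{k-1},\hat\xb^k)].
\end{align*}
Similarly the dual update \eqref{def_yhat} is the closed-form solution of $\argmin_{\yb}\{-\la\Lb\tilde\xb^k,\yb\ra+\tfrac{\tau_k}{2}\|\yb-\yb^{k-1}\|^2\}$, so Lemma~\ref{lem:optimality} (Euclidean case) yields $\la-\Lb\tilde\xb^k,\hat\yb^k\ra+\tfrac{\tau_k}{2}\|\hat\yb^k-\yb^{k-1}\|^2 \le \la-\Lb\tilde\xb^k,\yb\ra + \tfrac{\tau_k}{2}[\|\yb-\yb^{k-1}\|^2-\|\yb-\hat\yb^k\|^2]$. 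Adding these, the cross terms $\la\Lb\tilde\yb^k,\hat\xb^k-\xb\ra$ and $\la\Lb\tilde\xb^k,\yb-\hat\yb^k\ra$ combine; after rewriting $\la\Lb\hat\xb^k,\yb\ra-\la\Lb\xb,\hat\yb^k\ra$ in terms of $\la\Lb\tilde\yb^k,\hat\xb^k-\xb\ra+\la\Lb\tilde\xb^k,\yb-\hat\yb^k\ra$ plus the residual inner product $\la\Lb(\hat\xb^k-\tilde\xb^k),\yb-\tilde\yb^k\ra$ (using $\la\Lb a,b\ra=\la\Lb b,a\ra$ by symmetry of $\Lb$), one isolates exactly the coupling term that appears in \eqref{Q_rec}. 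The remaining pieces are the $\eta_k$-prox differences and the $\tau_k$-norm differences, still written with $\hat\xb^k$ and $\hat\yb^k$.

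Finally I would translate the $\hat\xb^k$/$\hat\yb^k$ quantities into the $\xb^k$/$\yb^k$ quantities of the algorithm using the remaining identities of Lemma~\ref{lem_exp} and the displayed identities right after it: $\bbe\{\mathbf{V}(\hat\xb^k,\xb)\}=\bbe\{m\mathbf{V}(\xb^k,\xb)-(m-1)\mathbf{V}(\xb^{k-1},\xb)\}$, $\bbe\{\mathbf{V}(\hat\xb^k,\xb^{k-1})\}=\bbe\{mV_{j_k}(x_{j_k}^k,x_{j_k}^{k-1})\}$, and the analogous $\|\yb-\hat\yb^k\|^2$ and $\|\yb^{k-1}-\hat\yb^k\|^2$ identities. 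Multiplying the per-$k$ bound by $\hat\theta_k$, summing over $k=1,\dots,N$, and taking $\bbe_{[i_k,j_k]}$ then produces precisely the four groups of terms on the right side of \eqref{Q_rec}. The main obstacle I anticipate is bookkeeping: correctly aligning the $\hat\theta_k$ versus $\theta_k$ weights through Lemma~\ref{lem_exp}, keeping track of which prediction variable ($\tilde\xb^k$, $\tilde\yb^k$ from \eqref{eqn:algo1-i}, \eqref{eqn:algo4-i}) sits in which subproblem, and ensuring the residual coupling term $\la\Lb(m\xb^k-(m-1)\xb^{k-1}-\tilde\xb^k),\yb-\tilde\yb^k\ra$ emerges with the right combination — this is where the $m\xb^k-(m-1)\xb^{k-1}$ expression comes from, via the identity $\bbe\{\mathbf{V}(\hat\xb^k,\cdot)\}=\bbe\{m\mathbf{V}(\xb^k,\cdot)-(m-1)\mathbf{V}(\xb^{k-1},\cdot)\}$ and its analogue for the $\hat\xb^k$ appearing inside the inner product, so that $\bbe\{\la\Lb\hat\xb^k,\cdot\ra\}$ lines up with $\bbe\{\la\Lb(m\xb^k-(m-1)\xb^{k-1}),\cdot\ra\}$ after the $\tilde\xb^k$ term is separated out.
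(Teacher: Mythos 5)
Your plan follows essentially the same route as the paper's proof: bound $Q(\bar\zb^N;\zb)$ by convexity of $F$, pass to the $\hat\theta_k$-weighted terms in $\hat\xb^k,\hat\yb^k$ via Lemma~\ref{lem_exp}, apply Lemma~\ref{lem:optimality} to the subproblems \eqref{def_xhat} and \eqref{def_yhat}, and then convert back to the algorithm iterates using the expectation identities stated after Lemma~\ref{lem_exp} (in particular $\bbe\{\tilde\yb^k\}=\bbe\{\hat\yb^k\}$, which is what eliminates the leftover cross term $\la \Lb(\xb-\tilde\xb^k),\tilde\yb^k-\hat\yb^k\ra$ that your exact-algebra rewriting omits) before taking the expectation over $j_k$ to replace $\hat\xb^k$ by $m\xb^k-(m-1)\xb^{k-1}$. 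Apart from that minor bookkeeping point and the stray $F(\xb)$ in your first display (both self-correcting, since you state the correct reduced bound immediately afterwards), the proposal is correct and matches the paper's argument.
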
 
\begin{proof}
By the definitions of $Q(\cdot;\cdot)$ in \eqref{eqn:gap} and $\bar\zb^N$, we have
\begin{align*}
Q(\bar \zb^N;\zb)&=F(\bar \xb^N)-F(\xb)+\langle \Lb\bar\xb^N,\yb\rangle-\langle\Lb\xb,\bar \yb^N\rangle\\
&\le \tsum_{k=0}^N\theta_k[F(\xb^k)-F(\xb)+\langle \Lb\xb^k,\yb\rangle]-\tsum_{k=0}^N\hat\theta_k\langle\Lb\xb,\hat \yb^k\rangle,
\end{align*}
where the inequality follows from the convexity of $F(\cdot)$. By taking expectation over $i_1,j_1,\dots,i_k,j_k$ and applying Lemma~\ref{lem_exp}, we obtain
\begin{align*}
\bbe_{[i_k,j_k]}\left\{Q(\bar \zb^N;\zb)\right\}&\le\bbe_{[i_k,j_K]}\left\{\tsum_{k=0}^N\hat \theta_k[F(\hat\xb^k)-F(\xb)+\langle\Lb\hat\xb^k,\yb\rangle - \langle \Lb\xb,\hat\yb^k\rangle]\right\}.
\end{align*}
Note that by applying Lemma~\ref{lem:optimality} to \eqref{def_xhat} and \eqref{def_yhat}, we have
\begin{align}\label{condi_subp}
  \langle \Lb\tilde\yb^k,\hat\xb^k-\xb\rangle + F(\hat\xb^k)- F(\xb)&\le \eta_k[\mathbf{V}(\xb^{k-1},\xb)-\mathbf{V}(\hat \xb^{k},\xb)-\mathbf{V}(\xb^{k-1},\hat \xb^{k})],\\
\langle \Lb\tilde\xb^k,\yb-\hat\yb^k\rangle&\le \tfrac{\tau_k}{2}[\|\yb-\yb^{k-1}\|^2-\|\yb-\hat\yb^{k}\|^2-\|\yb^{k-1}-\hat\yb^{k}\|^2].\nn
\end{align}
Combining the above three inequalities and in view of $\hat \yb^0=\yb^0=\0b$, we can conclude that
\begin{align}\label{raw_bnd_Q}
\bbe_{[i_k,j_k]}\{Q(\bar\zb^N;\zb)\}&\le \hat\theta_0Q_0(\xb,\yb) + \bbe_{[i_k,j_k]}\left\{\sum_{k=1}^N\hat\theta_k[\langle\Lb\tilde\yb^k,\xb-\hat\xb^k\rangle+\langle\Lb(\hat\xb^k-\tilde\xb^k),\yb\rangle+\langle\Lb(\tilde\xb^k-\xb),\hat\yb^k\rangle]\right\}\nn\\
&\quad +\bbe_{[i_k,j_k]}\left\{\tsum_{k=1}^N\hat\theta_k\eta_k[\mathbf{V}(\xb^{k-1},\xb)-\mathbf{V}(\hat\xb^k,\xb)-\mathbf{V}(\xb^{k-1},\hat\xb^{k})]\right\}\nn\\
&\quad +\bbe_{[i_k,j_k]}\left\{\tsum_{k=1}^N\tfrac{\hat\theta_k\tau_k}{2}[\|\yb-\yb^{k-1}\|^2-\|\yb-\hat\yb^{k}\|^2-\|\yb^{k-1}-\hat\yb^{k}\|^2]\right\}\nn\\
&\le \hat\theta_0Q_0 + \bbe_{[i_k,j_k]}\left\{\tsum_{k=1}^N\hat\theta_k\langle\Lb(\hat\xb^k-\tilde\xb^k),\yb-\tilde\yb^k\rangle\right\}\nn\\
&\quad + \bbe_{[i_k,j_k]}\left\{\tsum_{k=1}^Nm\hat\theta_k\eta_k[\mathbf{V}(\xb^{k-1},\xb)-\mathbf{V}(\xb^k,\xb)-V_{j_k}(x_{j_k}^{k-1},x_{j_k}^k)]\right\}\nn\\
&\quad + \bbe_{[i_k,j_k]}\left\{\tsum_{k=1}^N\tfrac{m\hat\theta_k\tau_k}{2}[\|\yb-\yb^{k-1}\|^2-\|\yb-\yb^k\|^2-\|y_{i_k}^{k-1}-y_{i_k}^k\|^2]\right\},
\end{align}
where the second inequality follows from Lemma~\ref{lem_exp} and the result in \eqref{Q_rec} immediately follows from taking expectation on $j_k$.
\end{proof}

The following proposition establishes the main convergence property of the A-DPD method stated in Algorithm~\ref{alg:ADPD-i}.
\begin{proposition}\label{prop_bndQ}
Let the iterates $(\xb^k, \hat\yb^k)$, $k=1,\dots,N$, be generated by Algorithm~\ref{alg:ADPD-i} and be defined as in \eqref{def_yhat}, respectively, and let $\bar \zb^N:=(\tsum_{k=0}^N\theta_k\xb^k,\tsum_{k=0}^N\hat\theta_k\hat\yb^k)$. Assume that the parameters $\{\alpha_k\}$, $\{\tau_k\}$, and $\{\eta_k\}$ in Algorithm~\ref{alg:ADPD-i} satisfy 
\begin{align}
 \hat\theta_{k}\tau_{k}= &~\hat\theta_{k-1}\tau_{k-1}, \quad k = 2, \ldots, N,\label{theta_tau}\\
\hat\theta_{k}\eta_{k} \le &~\hat\theta_{k-1}\eta_{k-1}, \quad k = 2, \ldots, N,\label{theta_eta}\\
  \alpha_{k}\hat\theta_{k}=&~m\hat\theta_{k-1}, \quad k = 2, \ldots, N+1,\label{alpha_theta}\\
  4m\alpha_kd_{max}^2 \le &~\eta_{k-1}\tau_k, \quad k = 2, \ldots, N,\label{eta_tau_alpha1}\\
   4(m-1)^2d_{max}^2 \le &~\eta_{k}\tau_k, \quad k = 1, \ldots, N.\label{eta_tau_alpha2}
\end{align}
where $\{\hat \theta_k\}$ is some given weight sequence and $d_{max}$ is the maximum degree of graph $\Gc$. Then, for any $\zb:=(\xb,\yb)\in X^m\times\bbr^{md}$, we have
\beq\label{bnd_Q1}
\bbe_{[i_k,j_k]}\{Q(\bar\zb^N;\zb)\}\le \hat\theta_0(F(\xb^0)-F(\xb)) + m\hat\theta_1\eta_1\mathbf{V}(\xb^0,\xb)+ \bbe_{[i_k,j_k]}\langle \vb,\yb\rangle,
\eeq
where $Q$ is defined in \eqref{eqn:gap}, and $\vb$ is defined as 
\beq\label{def_resi}
\vb:=\hat\theta_0 \Lb\xb^0+ m\hat\theta_N\Lb(\xb^N-\xb^{N-1})+m\hat\theta_1\tau_1\yb^N.
\eeq
Furthermore, for any saddle point $(\xb^*,\yb^*)$ of \eqref{eqn:saddle}, we have
\begin{align}\label{bnd_xyn}
\tfrac{m\hat\theta_N}{2}\left(1-\tfrac{4d_{max}^2}{\eta_N\tau_N}\right)&\max\left\{\tfrac{\eta_N}{2}\bbe_{[i_k,j_k]}\|x_{j_N}^N-x_{j_N}^{N-1}\|^2,\tau_N\bbe_{[i_k,j_k]}\|\yb^*-\yb^N\|^2\right\}\nn\\
&\le \hat\theta_0(F(\xb^0)-F(\xb^*) + \langle\Lb\xb^0,\yb^*\rangle) + m\hat\theta_1\eta_1\mathbf{V}(\xb^0,\xb^*) + \tfrac{m\hat\theta_1\tau_1}{2}\|\yb^*\|^2.
\end{align}
\end{proposition}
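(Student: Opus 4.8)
The plan is to start from the recursion \eqref{Q_rec} in Lemma~\ref{lem_recursion} and transform it into \eqref{bnd_Q1} by telescoping the prox terms $\mathbf{V}(\xb^{k-1},\xb)-\mathbf{V}(\xb^k,\xb)$ and the squared terms $\|\yb-\yb^{k-1}\|^2-\|\yb-\yb^k\|^2$, collapsing the bilinear terms into a single boundary contribution plus residuals, and absorbing every residual cross term into the negative quadratics $-m\hat\theta_k\eta_k V_{j_k}(x_{j_k}^{k-1},x_{j_k}^k)$ and $-\tfrac{m\hat\theta_k\tau_k}{2}\|y_{i_k}^{k-1}-y_{i_k}^k\|^2$; the parameter conditions \eqref{theta_tau}--\eqref{eta_tau_alpha2} are exactly what make each step go through. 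Throughout I use that $\Lb={\cal L}\otimes I_d$ is symmetric, that $|{\cal L}_{i,j}|\le d_{max}$ and $\|{\cal L}\|\le 2d_{max}$, and that each update \eqref{eqn:algo3-i}/\eqref{eqn:algo6-i} modifies a single block coordinate, so that $\yb^k-\yb^{k-1}$ and $\xb^k-\xb^{k-1}$ are supported on $i_k$ and $j_k$ respectively; this makes inner products such as $\langle\Lb(\xb^k-\xb^{k-1}),\yb^\ell-\yb^{\ell-1}\rangle$ collapse to a single scalar $\langle x_{j_k}^k-x_{j_k}^{k-1},y_{i_\ell}^\ell-y_{i_\ell}^{\ell-1}\rangle$ times one Laplacian entry.

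For the bilinear sum, I would substitute $\tilde\xb^k=\alpha_k(\xb^{k-1}-\xb^{k-2})+\xb^{k-1}$ from \eqref{eqn:algo1-i} to get $m\xb^k-(m-1)\xb^{k-1}-\tilde\xb^k=m(\xb^k-\xb^{k-1})-\alpha_k(\xb^{k-1}-\xb^{k-2})$, and then use $\alpha_k\hat\theta_k=m\hat\theta_{k-1}$ (condition \eqref{alpha_theta}) together with $\xb^0=\xb^{-1}$ to telescope $\tsum_{k=1}^N\hat\theta_k\langle\Lb[m(\xb^k-\xb^{k-1})-\alpha_k(\xb^{k-1}-\xb^{k-2})],\yb-\tilde\yb^k\rangle$ into the boundary term $m\hat\theta_N\langle\Lb(\xb^N-\xb^{N-1}),\yb-\tilde\yb^N\rangle$ plus a residual $\tsum_{k=1}^{N-1}m\hat\theta_k\langle\Lb(\xb^k-\xb^{k-1}),\tilde\yb^{k+1}-\tilde\yb^k\rangle$. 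The $\yb$-linear part of the boundary term contributes the $m\hat\theta_N\Lb(\xb^N-\xb^{N-1})$ piece of $\vb$. Meanwhile, condition \eqref{theta_tau} (so $\hat\theta_k\tau_k\equiv\hat\theta_1\tau_1$) and $\yb^0=\0b$ turn $\tsum_{k=1}^N\tfrac{m\hat\theta_k\tau_k}{2}[\|\yb-\yb^{k-1}\|^2-\|\yb-\yb^k\|^2]$ into $m\hat\theta_1\tau_1\langle\yb,\yb^N\rangle-\tfrac{m\hat\theta_1\tau_1}{2}\|\yb^N\|^2$ (the linear piece being the $m\hat\theta_1\tau_1\yb^N$ term of $\vb$), condition \eqref{theta_eta} lets Abel summation reduce $\tsum_{k=1}^N m\hat\theta_k\eta_k[\mathbf{V}(\xb^{k-1},\xb)-\mathbf{V}(\xb^k,\xb)]$ to $m\hat\theta_1\eta_1\mathbf{V}(\xb^0,\xb)$ up to nonpositive terms, and the $\hat\theta_0\langle\Lb\xb^0,\yb\rangle$ inside $\hat\theta_0 Q_0(\xb,\yb)$ supplies the $\hat\theta_0\Lb\xb^0$ piece of $\vb$.

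The core of the argument is showing that all leftover cross terms are dominated by the negative quadratics. Writing $\tilde\yb^{k+1}-\tilde\yb^k=m(\yb^{k+1}-\yb^k)-(m-1)(\yb^k-\yb^{k-1})$, the residual sum splits into a ``same-step'' part $-\tsum_{k=1}^{N-1}m(m-1)\hat\theta_k\langle\Lb(\xb^k-\xb^{k-1}),\yb^k-\yb^{k-1}\rangle$ and a ``forward'' part $\tsum_{k=1}^{N-1}m^2\hat\theta_k\langle\Lb(\xb^k-\xb^{k-1}),\yb^{k+1}-\yb^k\rangle$. Cauchy--Schwarz on the single-coordinate increments bounds the same-step term at level $k$ by $m(m-1)\hat\theta_k d_{max}\|x_{j_k}^k-x_{j_k}^{k-1}\|\,\|y_{i_k}^k-y_{i_k}^{k-1}\|$, and Young's inequality splits it against $-\tfrac{m\hat\theta_k\eta_k}{2}\|x_{j_k}^k-x_{j_k}^{k-1}\|^2$ (using $V_{j_k}\ge\tfrac12\|\cdot\|^2$, cf.\ \eqref{eqn:def_V_i_s}) and $-\tfrac{m\hat\theta_k\tau_k}{2}\|y_{i_k}^{k-1}-y_{i_k}^k\|^2$, leaving a nonpositive remainder exactly when $4(m-1)^2d_{max}^2\le\eta_k\tau_k$, i.e.\ \eqref{eta_tau_alpha2}. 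After reindexing, the forward part couples $\xb^{k-1}-\xb^{k-2}$ (supported on $j_{k-1}$) with $\yb^k-\yb^{k-1}$ (supported on $i_k$), and Young's inequality against the step-$(k-1)$ quadratic $-\tfrac{m\hat\theta_{k-1}\eta_{k-1}}{2}\|x_{j_{k-1}}^{k-1}-x_{j_{k-1}}^{k-2}\|^2$ and the step-$k$ quadratic $-\tfrac{m\hat\theta_k\tau_k}{2}\|y_{i_k}^{k-1}-y_{i_k}^k\|^2$ leaves a nonpositive remainder exactly when $4m\alpha_kd_{max}^2\le\eta_{k-1}\tau_k$, i.e.\ \eqref{eta_tau_alpha1}. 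Discarding all the remaining nonpositive terms, including $-\tfrac{m\hat\theta_1\tau_1}{2}\|\yb^N\|^2$, $-m\hat\theta_N\eta_N\mathbf{V}(\xb^N,\xb)$, and the $-m\hat\theta_N\langle\Lb(\xb^N-\xb^{N-1}),\tilde\yb^N\rangle$ piece after it too is absorbed into the step-$N$ quadratics by the same scheme, then yields \eqref{bnd_Q1}.

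For \eqref{bnd_xyn} I would run the same computation but keep the step-$N$ quadratics rather than discard them, and take $\zb=\zb^*=(\xb^*,\yb^*)$. The only cross term that must still be handled is the boundary term $m\hat\theta_N\langle\Lb(\xb^N-\xb^{N-1}),\yb^*-\tilde\yb^N\rangle$ with $\yb^*-\tilde\yb^N=(\yb^*-\yb^N)-(m-1)(\yb^N-\yb^{N-1})$; its $(m-1)(\yb^N-\yb^{N-1})$-part is absorbed via \eqref{eta_tau_alpha2} into $-\tfrac{m\hat\theta_N\eta_N}{2}\|x_{j_N}^N-x_{j_N}^{N-1}\|^2$ and $-\tfrac{m\hat\theta_N\tau_N}{2}\|y_{i_N}^{N-1}-y_{i_N}^N\|^2$, while its $(\yb^*-\yb^N)$-part is absorbed only \emph{partially} into $-\tfrac{m\hat\theta_N\eta_N}{2}\|x_{j_N}^N-x_{j_N}^{N-1}\|^2$ and $-\tfrac{m\hat\theta_N\tau_N}{2}\|\yb^*-\yb^N\|^2$ (the latter being what remains of the squared-term telescope since $\hat\theta_N\tau_N=\hat\theta_1\tau_1$), leaving the common factor $\big(1-\tfrac{4d_{max}^2}{\eta_N\tau_N}\big)$ in front of both survivors. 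Finally I complete the square via $m\hat\theta_1\tau_1\langle\yb^N,\yb^*\rangle-\tfrac{m\hat\theta_1\tau_1}{2}\|\yb^N\|^2=\tfrac{m\hat\theta_1\tau_1}{2}(\|\yb^*\|^2-\|\yb^*-\yb^N\|^2)$, invoke $\bbe_{[i_k,j_k]}\{Q(\bar\zb^N;\zb^*)\}\ge0$, and rearrange; since the two surviving step-$N$ quadratics are nonnegative, bounding their sum bounds their maximum, which is \eqref{bnd_xyn}. The main obstacle is the bookkeeping in the absorption step: one must partition each negative quadratic into exactly the right fractions so that every interior residual is swallowed for every $k$ and a clean $\big(1-\tfrac{4d_{max}^2}{\eta_N\tau_N}\big)$-fraction of the step-$N$ quadratics survives, which is precisely why \eqref{eta_tau_alpha1}--\eqref{eta_tau_alpha2} are stated in the sharp form they are.
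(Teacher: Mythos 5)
Your overall route is the paper's: start from Lemma~\ref{lem_recursion}, telescope the bilinear, prox and squared terms under \eqref{theta_tau}--\eqref{alpha_theta}, absorb the interior cross terms by Young's inequality using \eqref{eta_tau_alpha1}--\eqref{eta_tau_alpha2}, extract $\vb$ from the linear-in-$\yb$ pieces, and obtain \eqref{bnd_xyn} from $Q(\bar\zb^N;\zb^*)\ge 0$ after completing the square in $\yb^N$. For \eqref{bnd_Q1} your plan is essentially the paper's proof, with two bookkeeping caveats. First, each negative quadratic must be split into \emph{quarters}, not halves: the $x$-quadratic at step $k$ is charged both by the same-step term at $k$ and by the forward term at $k+1$ (and the $y$-quadratic at $k$ by the same-step and forward terms at $k$), which is exactly why the factor $4$ appears in \eqref{eta_tau_alpha1}--\eqref{eta_tau_alpha2}; your narrative spends the full halves on the same-step terms, though you do flag the partitioning issue at the end. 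Second, you cannot both ``discard'' $-\tfrac{m\hat\theta_1\tau_1}{2}\|\yb^N\|^2$ and absorb $-m\hat\theta_N\langle\Lb(\xb^N-\xb^{N-1}),\yb^N\rangle$: that cross term involves all coordinates of $\yb^N$, so the only available quadratic that can swallow it is precisely this $\|\yb^N\|^2$ term (jointly with part of the $\eta_N$-quadratic), as in the step leading to \eqref{Q_standard}; this is a wording slip rather than a conceptual one.

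The genuine gap is in your derivation of \eqref{bnd_xyn}. You propose a \emph{single} partial absorption of $m\hat\theta_N\langle\Lb(\xb^N-\xb^{N-1}),\yb^*-\yb^N\rangle$ that leaves the common factor $1-\tfrac{4d_{max}^2}{\eta_N\tau_N}$ in front of \emph{both} surviving quadratics, and then bound their sum. Young's inequality cannot deliver this: if one absorption consumes a fraction $\delta_x$ of the surviving quadratic $\tfrac{m\hat\theta_N\eta_N}{4}\|x_{j_N}^N-x_{j_N}^{N-1}\|^2$ and a fraction $\delta_y$ of $\tfrac{m\hat\theta_N\tau_N}{2}\|\yb^*-\yb^N\|^2$, the constraint is on the product, $\delta_x\delta_y\approx\tfrac{4d_{max}^2}{\eta_N\tau_N}$; forcing $\delta_x=\delta_y$ gives each fraction $\approx\tfrac{2d_{max}}{\sqrt{\eta_N\tau_N}}$, which is strictly larger than $\tfrac{4d_{max}^2}{\eta_N\tau_N}$ precisely in the regime $\eta_N\tau_N>4d_{max}^2$ enforced by \eqref{eta_tau_alpha2}. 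So the absorption with your stated factor is false in general, and the sum-then-max shortcut only yields a weaker constant than \eqref{bnd_xyn}. The paper instead derives \emph{two separate} inequalities from the same master bound: for the $x$-estimate it spends the \emph{entire} $\tau_N$-quadratic on the cross term, leaving the leftover $\tfrac{m\hat\theta_N d_{max}^2}{\tau_N}\|x_{j_N}^N-x_{j_N}^{N-1}\|^2$ to be compared with $\tfrac{m\hat\theta_N\eta_N}{4}\|x_{j_N}^N-x_{j_N}^{N-1}\|^2$, which produces the $1-\tfrac{4d_{max}^2}{\eta_N\tau_N}$ factor; symmetrically, for the $\yb$-estimate it spends the entire $\eta_N$-quadratic. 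Since both resulting quantities are bounded by the same right-hand side, so is their maximum, which is exactly \eqref{bnd_xyn}. Replacing your single-absorption step by these two separate absorptions repairs the argument without changing anything else in your plan.
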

\begin{proof}
In view of Lemma~\ref{lem_recursion}, we have
\begin{align}\label{rel1}
\bbe_{[i_k,j_k]}\{Q(\bar\zb^N;\zb)\}&\le \hat\theta_0Q_0(\xb,\yb)  + \bbe_{[i_k,j_k]}\left\{\tsum_{k=1}^N\hat\theta_k\Delta_k\right\},
\end{align}
where 
\begin{align}\label{def_Delta}
\Delta_k:= &\langle \Lb(m\xb^k-(m-1)\xb^{k-1}-\tilde\xb^k),\yb-\tilde\yb^{k}\rangle 
+ m\eta_k[\mathbf{V}(\xb^{k-1},\xb)-\mathbf{V}(\xb^k,\xb)-V_{j_k}(x_{j_k}^{k-1},x_{j_k}^k)]\nn\\
&\quad + \tfrac{m\tau_k}{2}[\|\yb-\yb^{k-1}\|^2-\|\yb-\yb^k\|^2-\|y_{i_k}^{k-1}-y_{i_k}^k\|^2].
\end{align}
Now we will provide a bound for $\tsum_{k=1}^N\hat\theta_k\Delta_k$. Observe that by \eqref{eqn:algo1-i}, we obtain
\begin{align*}
\tsum_{k=1}^N\hat\theta_k&\langle \Lb(m\xb^k-(m-1)\xb^{k-1}-\tilde \xb^k),\yb-\tilde\yb^k\rangle\\
&= \tsum_{k=1}^N\hat\theta_k\langle \Lb(m(\xb^k-\xb^{k-1})-\alpha_k(\xb^{k-1}-\xb^{k-2})),\yb-\tilde \yb^k\rangle\\
&= \tsum_{k=1}^N\left[m\hat\theta_k\langle\Lb(\xb^k-\xb^{k-1}),\yb-\tilde\yb^k\rangle - \hat\theta_k\alpha_k\langle \Lb(\xb^{k-1}-\xb^{k-2}),\yb-\tilde\yb^{k-1}\rangle\right]\\
&\quad+ \tsum_{k=1}^N\hat\theta_k\alpha_k\langle \Lb(\xb^{k-1}-\xb^{k-2}),\tilde\yb^k-\tilde \yb^{k-1})\rangle\\
&=m\hat\theta_N\langle \Lb(\xb^N-\xb^{N-1}),\yb-\yb^N-(m-1)(\yb^N-\yb^{N-1})\rangle\\
&\quad + \tsum_{k=2}^N\hat\theta_k\alpha_k\langle \Lb(\xb^{k-1}-\xb^{k-2}),m(\yb^{k}-\yb^{k-1})-(m-1)(\yb^{k-1}-\yb^{k-2})\rangle\\
&=m\hat\theta_N\langle \Lb(\xb^N-\xb^{N-1}),\yb-\yb^N\rangle + \tsum_{k=2}^Nm\hat\theta_k\alpha_k\langle\Lb(\xb^{k-1}-\xb^{k-2}),\yb^{k}-\yb^{k-1}\rangle\\
&\quad -\tsum_{k=1}^N(m-1)\hat\theta_{k+1}\alpha_{k+1}\langle\Lb(\xb^{k}-\xb^{k-1}),\yb^{k}-\yb^{k-1}\rangle,
\end{align*}
where the third equality follows from \eqref{alpha_theta}, \eqref{eqn:algo4-i} and the fact that $\xb^{-1}=\xb^{0}$, and the last equality follows from \eqref{alpha_theta} and rearranging the terms. 
Also note that
\begin{align*}
\tsum_{k=1}^N&m\hat\theta_k\eta_k[\mathbf{V}(\xb^{k-1},\xb)-\mathbf{V}(\xb^k,\xb)-V_{j_k}(x_{j_k}^{k-1},x_{j_k}^k)]\\
&=m\hat\theta_1\eta_1\mathbf{V}(\xb^0,\xb)+\tsum_{k=2}^{N}(m\hat\theta_{k}\eta_{k}-m\hat\theta_{k-1}\eta_{k-1})\mathbf{V}(\xb^{k-1},\xb)-m\hat\theta_N\eta_N\mathbf{V}(\xb^N,\xb)\\
&\quad-\tsum_{k=1}^Nm\hat\theta_k\eta_k V_{j_k}(x_{j_k}^{k-1},x_{j_k}^k)\\
&\le m\hat\theta_1\eta_1\mathbf{V}(\xb^0,\xb)-m\hat\theta_N\eta_N\mathbf{V}(\xb^N,\xb) - \tsum_{k=1}^Nm\hat\theta_k\eta_kV_{j_k}(x_{j_k}^{k-1},x_{j_k}^k),
\end{align*}
where the last inequality follows from \eqref{theta_eta}. Similarly, by \eqref{theta_tau} we have 
\begin{align*}
\tsum_{k=1}^N&\tfrac{m\hat\theta_k\tau_k}{2}[\|\yb-\yb^{k-1}\|^2-\|\yb-\yb^k\|^2-\|y^{k-1}_{i_k}-y^{k}_{i_k}\|^2]\\
&\le \tfrac{m\hat\theta_1\tau_1}{2}\{\|\yb-\yb^0\|^2 -
\|\yb-\yb^N\|^2\}-\tsum_{k=1}^N\tfrac{m\hat\theta_k\tau_k}{2}\|y^{k-1}_{i_k}-y^k_{i_k}\|^2.
\end{align*}
Combining the above three results, we conclude that 
\begin{align*}
\tsum_{k=1}^N\hat\theta_k\Delta_k
&\le m\hat\theta_N\langle \Lb(\xb^N-\xb^{N-1}),\yb-\yb^N\rangle + \tsum_{k=2}^Nm\hat\theta_k\alpha_k\langle\Lb(\xb^{k-1}-\xb^{k-2}),\yb^{k}-\yb^{k-1}\rangle\\
&\quad -\tsum_{k=1}^N(m-1)\hat\theta_{k+1}\alpha_{k+1}\langle\Lb(\xb^{k}-\xb^{k-1}),\yb^{k}-\yb^{k-1}\rangle\\
&\quad+ m\hat\theta_1\eta_1\mathbf{V}(\xb^0,\xb)-m\hat\theta_N\eta_N\mathbf{V}(\xb^N,\xb) - \tsum_{k=1}^Nm\hat\theta_k\eta_kV_{j_k}(x_{j_k}^{k-1},x_{j_k}^k)\\
&\quad + \tfrac{m\hat\theta_1\tau_1}{2}\|\yb-\yb^0\|^2 -\tfrac{m\hat\theta_N\tau_N}{2}\|\yb-\yb^N\|^2-\tsum_{k=1}^N\tfrac{m\hat\theta_k\tau_k}{2}\|y_{i_k}^{k-1}-y_{i_k}^k\|^2\\
&\le m\hat\theta_N\langle \Lb(\xb^N-\xb^{N-1}),\yb-\yb^N\rangle - \tfrac{m\hat\theta_N\eta_N}{4}\|x_{j_N}^{N-1}-x_{j_N}^N\|^2\\
&\quad + \sum_{k=2}^N\left\{m\hat\theta_k\alpha_k {\cal L}_{i_{k},j_{k-1}}\langle x_{j_{k-1}}^{k-1}-x_{j_{k-1}}^{k-2},y_{i_k}^{k}-y_{i_k}^{k-1}\rangle -\tfrac{m\hat\theta_{k-1}\eta_{k-1}}{4}\|x_{j_{k-1}}^{k-1}-x_{j_{k-1}}^{k-2}\|^2 \right\}\\
&\quad + \sum_{k=1}^N\left\{(m-1)\hat\theta_{k+1}\alpha_{k+1}{\cal L}_{i_k,j_k}\langle x_{j_k}^{k}-x_{j_k}^{k-1},y_{i_k}^{k}-y_{i_k}^{k-1}\rangle - \tfrac{m\hat\theta_k\eta_k}{4}\|x_{j_k}^{k-1}-x_{j_k}^k\|^2 \right\}\\
&+ m\hat\theta_1\eta_1\mathbf{V}(\xb^0,\xb) + \tfrac{m\hat\theta_1\tau_1}{2}\{\|\yb-\yb^0\|^2 -\|\yb-\yb^N\|^2\} - \tsum_{k=1}^N\tfrac{m\hat\theta_k\tau_k}{2}\|y_{i_k}^{k-1}-y_{i_k}^k\|^2.
\end{align*}
Note that by \eqref{eta_tau_alpha1} and the fact that $b\langle u,v\rangle - a\|v\|^2/2 \le b^2\|u\|^2/(2a), \forall a>0$, for all $k\ge 2$, we have 
\begin{align*}
&m\hat\theta_k\alpha_k {\cal L}_{i_{k},j_{k-1}}\langle x_{j_{k-1}}^{k-1}-x_{j_{k-1}}^{k-2},y_{i_k}^{k}-y_{i_k}^{k-1}\rangle -\tfrac{m\hat\theta_{k-1}\eta_{k-1}}{4}\|x_{j_{k-1}}^{k-1}-x_{j_{k-1}}^{k-2}\|^2- \tfrac{m\hat\theta_k\tau_k}{4}\|y_{i_k}^{k-1}-y_{i_k}^k\|^2\\
&\le m\left(\tfrac{\hat\theta_k^2\alpha_k^2{\cal L}_{i_k,j_{k-1}}^2}{\hat\theta_{k-1}\eta_{k-1}}-\tfrac{\hat\theta_k\tau_k}{4}\right)\|y_{i_k}^{k-1}-y_{i_k}^k\|^2\le 0.
\end{align*}
Similarly, by \eqref{eta_tau_alpha2} for all $k\ge 1$, we have
\begin{align*}
&(m-1)\hat\theta_{k+1}\alpha_{k+1}{\cal L}_{i_k,i_k}\langle x_{j_k}^{k}-x_{j_k}^{k-1},y_{i_k}^{k}-y_{i_k}^{k-1}\rangle - \tfrac{m\hat\theta_k\eta_k}{4}\|x_{j_k}^{k-1}-x_{j_k}^k\|^2 - \tfrac{m\hat\theta_k\tau_k}{4}\|y_{i_k}^{k-1}-y_{i_k}^k\|^2 \le 0.
\end{align*}
Hence, combining the above three inequalities, we conclude that
\begin{align}
\tsum_{k=1}^N\hat\theta_k\Delta_k
&\le m\hat\theta_N\langle \Lb(\xb^N-\xb^{N-1}),\yb-\yb^N\rangle - \tfrac{m\hat\theta_N\eta_N}{4}\|x_{j_N}^{N-1}-x_{j_N}^N\|^2\nn \\
&\quad + m\hat\theta_1\eta_1\mathbf{V}(\xb^0,\xb) + \tfrac{m\hat\theta_1\tau_1}{2}\left\{\|\yb-\yb^0\|^2-\|\yb-\yb^N\|^2\right\}\label{rel2}\\
&\le m\hat\theta_N\langle\Lb(\xb^{N-1}-\xb^{N}),\yb^N\rangle- \tfrac{m\hat\theta_N\eta_N}{4}\|x_{j_N}^{N-1}-x_{j_N}^N\|^2 - \tfrac{m\hat\theta_N\tau_N}{2}\|\yb^N\|^2\nn\\
&\quad + m\hat\theta_1\eta_1\mathbf{V}(\xb^0,\xb) + \tfrac{\hat\theta_1\tau_1}{2}\|\yb^0\|^2 + m\langle\hat\theta_N\Lb(\xb^N-\xb^{N-1})+\hat\theta_1\tau_1(\yb^N-\yb^0),\yb\rangle\nn\\
&\le m\hat\theta_N\tsum_{i=1}^m\left(\tfrac{{\cal L}_{i,j_N}^2}{\eta_N}-\tfrac{\tau_N}{2}\right)\|y_i^N\|^2 + m\langle\hat\theta_N\Lb(\xb^N-\xb^{N-1})+\hat\theta_1\tau_1\yb^N,\yb\rangle\nn\\
&\quad + m\hat\theta_1\eta_1\mathbf{V}(\xb^0,\xb), 
\nn
\end{align}
where the second inequality follows from \eqref{eqn:def_V_i_s} and the fact that $b\langle u,v\rangle - a\|v\|^2/2 \le b^2\|u\|^2/(2a), \forall a>0$, and the last inequality also follows from the fact and $\yb^0=\0b$. In view of \eqref{eta_tau_alpha2} and \eqref{rel1}, we obtain
\begin{align}\label{Q_standard}
\bbe_{[i_k,j_k]}\left\{Q(\bar \zb^N,\zb)\right\}
&\le \hat\theta_0Q_0(\xb,\yb)  + m\hat\theta_1\eta_1\mathbf{V}(\xb^0,\xb) 
\nn\\
&\quad + \bbe_{[i_k,j_k]}\left\{m\langle\hat\theta_N\Lb(\xb^N-\xb^{N-1})+\hat\theta_1\tau_1\yb^N,\yb\rangle\right\}\nn\\
&=\hat\theta_0(F(\xb^0)-F(\xb)) + m\hat\theta_1\eta_1\mathbf{V}(\xb^0,\xb) 
\nn\\
&\quad + \bbe_{[i_k,j_k]}\left\{\langle \hat\theta_0 \Lb\xb^0+ m\hat\theta_N\Lb(\xb^N-\xb^{N-1})+m\hat\theta_1\tau_1\yb^N,\yb\rangle\right\},
\end{align}
where the last equality follows from the definition of $Q_0$ in \eqref{def_Q0}. The result in \eqref{bnd_Q1} immediately follows from the above relation.
Furthermore, from \eqref{rel1}, \eqref{rel2}, \eqref{theta_tau} and the facts that $Q(\bar\zb^N,\zb^*)\ge 0, \yb^0=\0b$, 
we have
\begin{align*}
0\le \bbe_{[i_k,j_k]}\left\{Q(\bar\zb^N,\zb^*)\right\}&\le \hat\theta_0Q_0(\xb^*,\yb^*) + m\hat\theta_1\eta_1\mathbf{V}(\xb^0,\xb^*) + \tfrac{m\hat\theta_1\tau_1}{2}\|\yb^*\|^2\\ 
&\quad + \bbe_{[i_k,j_k]}\left\{m\hat\theta_N\langle \Lb(\xb^N-\xb^{N-1}),\yb^*-\yb^N\rangle - \tfrac{m\hat\theta_N\eta_N}{4}\|x_{j_N}^{N-1}-x_{j_N}^N\|^2\right\}\nn \\
&\quad - \bbe_{[i_k,j_k]}\tfrac{m\hat\theta_N\tau_N}{2}\left\{\|\yb^*-\yb^N\|^2\right\}\\
&\le \hat\theta_0Q_0(\xb^*,\yb^*) + m\hat\theta_1\eta_1\mathbf{V}(\xb^0,\xb^*) + \tfrac{m\hat\theta_1\tau_1}{2}\|\yb^*\|^2\\ 
&\quad + \bbe_{[i_k,j_k]}\left\{\tsum_{i=1}^m m\hat\theta_N{\cal L}_{i,j_N}\langle x_{j_N}^N-x_{j_N}^{N-1},y_i^*-y_i^N\rangle\right\}\nn\\
&\quad - \bbe_{[i_k,j_k]}\left\{\tfrac{m\hat\theta_N\tau_N}{2}\tsum_{i=1}^m\|y_i^*-y_i^N\|^2
 + \tfrac{m\hat\theta_N\eta_N}{4}\|x_{j_N}^{N-1}-x_{j_N}^N\|^2\right\}, 
\end{align*}
which together with \eqref{def_Q0} and the fact that $b\langle u,v\rangle - a\|v\|^2/2 \le b^2\|u\|^2/(2a), \forall a>0$ imply that
\begin{align*}
\tfrac{m\hat\theta_N\eta_N}{4}\bbe_{[i_k,j_k]}\|x_{j_N}^{N-1}-x_{j_N}^N\|^2
&\le \hat\theta_0(F(\xb^0)-F(\xb^*) + \langle\Lb\xb^0,\yb^*\rangle) + m\hat\theta_1\eta_1\mathbf{V}(\xb^0,\xb^*) + \tfrac{m\hat\theta_1\tau_1}{2}\|\yb^*\|^2\\
&\quad + \bbe_{[i_k,j_k]}\{\tfrac{m\hat\theta_Nd_{max}^2}{\tau_N}\|x_{j_N}^{N-1}-x_{j_N}^{N}\|^2\},
\end{align*}
where 
the last inequality follows from the definition of $L$ in \eqref{def_Laplacian}. Similarly, we obtain
\begin{align*}
\tfrac{m\hat\theta_N\tau_N}{2}\bbe_{[i_k,j_k]}\|\yb^*-\yb^N\|^2 
&\le \hat\theta_0(F(\xb^0)-F(\xb^*) + \langle\Lb\xb^0,\yb^*\rangle) + m\hat\theta_1\eta_1\mathbf{V}(\xb^0,\xb^*) + \tfrac{m\hat\theta_1\tau_1}{2}\|\yb^*\|^2\\
&\quad + \bbe_{[i_k,j_k]}\{\tfrac{m\hat\theta_Nd_{max}^2}{\eta_N}\|\yb^*-\yb^N\|^2\},
\end{align*}
which implies the result in \eqref{bnd_xyn}.
\end{proof}

\noindent{\bf Proof of Theorem~\ref{main_adpd}.}
Let us set $\{\hat\theta_k\}$ as follow
\beq\label{def_htheta}
\hat\theta_k = 
\begin{cases}
\tfrac{m}{N+m}, \ &k = 0,\\
\tfrac{1}{N+m}, \ &k = 1,\dots,N.
\end{cases}
\eeq
Therefore, it is easy to check that \eqref{para_adpd} satisfies conditions \eqref{theta_eta}-\eqref{eta_tau_alpha2}. Also note that by \eqref{def_xweight}, we have 
\beq \label{def_theta}
\theta_k=
  \begin{cases}
  \tfrac{1}{N+m}, \ &k=1,\dots,N-1,\\
  \tfrac{m}{N+m}, \ &k=N,
  \end{cases}
\eeq
which implies that $\bar\xb^N=\tfrac{1}{N+m}(\tsum_{k=0}^{N-1}\underline \xb^k+m\underline \xb^N)$.
By plugging the parameter setting in \eqref{bnd_Q1}, we have
\begin{align}\label{bnd_Qv1}
  \bbe_{[i_k,j_k]}\{Q(\bar\zb^N;\xb^*,\yb)\}\le \tfrac{m}{N+m}\left[F(\xb^0)-F(\xb^*) + 2md_{max}\mathbf{V}(\xb^0,\xb^*)\right]+ \bbe_{[i_k,j_k]}\{\langle \vb,\yb\rangle\}.
\end{align}
Observe that from \eqref{def_resi} and \eqref{para_adpd},
\begin{align*}
\bbe_{[i_k,j_k]}\{\|\vb\|\}&\le \tfrac{m}{N+m}\bbe_{[i_k,j_k]}\left[\|\Lb\xb^0\| + 2d_{max}\|x_{j_N}^N-x_{j_N}^{N-1}\|+2md_{max}(\|\yb^*-\yb^N\| + \|\yb^*\|)\right].
\end{align*}
By \eqref{bnd_xyn}, \eqref{para_adpd} and Jensen's inequality, we have
\begin{align*}
(\bbe\{\|x_{j_N}^N-x_{j_N}^{N-1}\|\})^2&\le \bbe\{\|x_{j_N}^N-x_{j_N}^{N-1}\|^2\}\le 4\left[\tfrac{F(\xb^0)-F(\xb^*) + \la \Lb\xb^0,\yb^*\ra}{md_{max}} + 2\mathbf{V}(\xb^0,\xb^*) + \|\yb^*\|^2\right], 
\\
(\bbe\{\|\yb^*-\yb^N\|\})^2&\le \bbe\{\|\yb^*-\yb^N\|^2\}\le 2\left[\tfrac{F(\xb^0)-F(\xb^*) + \la \Lb\xb^0,\yb^*\ra}{md_{max}} + 2\mathbf{V}(\xb^0,\xb^*) + \|\yb^*\|^2\right].
\end{align*}
Hence, in view of the above three inequalities, we conclude that
\begin{align*}
\bbe_{[i_k,j_k]}\{\|\vb\|\}&\le \tfrac{m}{N+m}\Big\{\|\Lb\xb^0\|+ 2md_{max}\|\yb^*\|\\
&\quad + 7md_{max}\sqrt{\tfrac{F(\xb^0)-F(\xb^*) + \la \Lb\xb^0,\yb^*\ra}{md_{max}} + 2\mathbf{V}(\xb^0,\xb^*) + \|\yb^*\|^2}\\
&= {\cal O}\left\{\tfrac{m}{N+m}\left[\|\Lb\xb^0\|+ md_{max}\|\yb^*\| + md_{max}\sqrt{\tfrac{F(\xb^0)-F(\xb^*) + \la \Lb\xb^0,\yb^*\ra}{md_{max}} + \mathbf{V}(\xb^0,\xb^*)}\right]\right\}.
\end{align*}
Furthermore, by \eqref{bnd_Qv1} we have
\[
  \bbe_{[i_k,j_k]}\{g(\vb,\bar\zb^N)\}\le \tfrac{m}{N+m}\left[F(\xb^0)-F(\xb^*) + 2md_{max}\mathbf{V}(\xb^0,\xb^*)\right].
\]
The results in \eqref{bnd_adpd} immediately follow from Proposition~\ref{prop:approx} and the above two inequalities.
\hfill $\square$


\subsection{Convergence properties of Algorithm~\ref{alg:AASDCS}}
Before we provide the proof for Theorem~\ref{main_aasdcs}, which establishes the main convergence results for AA-SDCS, we state in the following proposition a general result for the \inner~procedure. 
For notation convenience, we use the notations defined the in \inner~procedure (cf. Algorithm~\ref{alg:AASDCS}) 
and let 
\beq\label{def_Lambda}
\Lambda_t: = 
\begin{cases}
1, \ &t = 1,\\
(1-\lambda_t)\Lambda_t, \ &t\ge2.
\end{cases}
\eeq



\begin{proposition}\label{prop:inner}
  If $\{\beta_t\}$ and $\{\lambda_t\}$ in the \inner~procedure satisfy
  \begin{align}
  \lambda_1 &= 1, \label{lam_1}\\
  \mu + \eta + \beta_t&>(\mathcal{C}+L)\lambda_t^2, \ t = 1,\dots,T, \label{mu_L}\\
  \tfrac{\beta_t}{\Lambda_t} &= \tfrac{\beta_{t-1}}{\Lambda_{t-1}}, \ t = 1,\dots,T, \label{beta_Lambda}
  \end{align}
  then, under assumptions \eqref{assume:unbiased} and \eqref{assume:sm_bounded}, for $u \in U$,
  \begin{align}\label{inner_recursion}
  \bbe_{[\xi]}\Phi(\underline u^T) - \Phi(u) 
&\le \Lambda_T \beta_1 V(u^0,u) - (\Lambda_T\beta_1+\mu + \eta)\bbe_{\xi}V(u^T,u) + \Lambda_T\tsum_{t=1}^T\tfrac{2(M^2+\sigma^2)\lambda_t^2}{(\mu + \eta + \beta_t -(\mathcal{C}+L)\lambda_t^2)\Lambda_t},
  \end{align}
  where $\bbe_{[\xi]}$ represents taking the expectation over $\{\xi_i^1, \dots, \xi_i^{T}\}$ and $\Phi$ is defined as
\begin{align}\label{eqn:defphi}
\Phi(u) := \la w, u\ra + \phi(u) + \eta V(x,u).
\end{align}
\end{proposition}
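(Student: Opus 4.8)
The plan is to analyze a single inner iteration $t$ of the \inner~procedure and derive a recursive inequality in the "potential" $V(u^{t-1}, u) \mapsto V(u^t, u)$, then telescope using the weights $\Lambda_t$ and condition \eqref{beta_Lambda}. First I would set up notation: write $\phi(u) = f(u)$ for the composite objective appearing in $\Phi$ (restricted to the inner-loop context), split $\phi(\hat u^t) - \phi(u)$ into a linear part via a stochastic subgradient $G^t$ and the curvature part governed by $\mu$ (from strong convexity) and by $L$ and $M$ (from the upper bound in \eqref{eqn:nonsmooth}), and write $\delta^t := G^t - f'(\hat u^t)$ for the stochastic error, which by \eqref{assume:unbiased}–\eqref{assume:sm_bounded} satisfies $\bbe[\delta^t \mid \text{past}] = 0$ and $\bbe\|\delta^t\|_*^2 \le \sigma^2$.

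Next I would apply Lemma~\ref{lem:optimality} to the prox-step \eqref{eqn:inner1} with $q(u) = \lambda_t\langle w + G^t + \eta(\nabla\omega(\hat u^t) - \nabla\omega(x)), u\rangle$ (absorbing the $\eta$-Bregman term into the objective), $\mu_1 = \lambda_t(\mu+\eta)$ attached to $V(\hat u^t, \cdot)$, and $\mu_2 = (1-\lambda_t)(\mu+\eta) + \beta_t$ attached to $V(u^{t-1}, \cdot)$. This yields a three-point inequality bounding $\langle \cdot, u^t - u\rangle$ plus the Bregman terms. Then I would use the standard convexity relations: the smooth/Lipschitz part of $\phi$ contributes a term bounded by $\tfrac{(\mathcal C + L)\lambda_t^2}{2}\|u^t - u^{t-1}\|^2 + M\lambda_t\|u^t - u^{t-1}\|$-type expressions (using $\eta V \ge$ strong convexity), which is exactly why the condition \eqref{mu_L} $\mu+\eta+\beta_t > (\mathcal C + L)\lambda_t^2$ appears — it guarantees the quadratic-in-$\|u^t-u^{t-1}\|$ terms can be dominated by the $\beta_t V(u^{t-1},u^t)$ and $(\mu+\eta)V(\hat u^t, u^t)$ terms produced by Lemma~\ref{lem:optimality}. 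The stochastic error and the nonsmooth constant $M$ are handled by Young's inequality: $\langle \delta^t + (\text{bias from } M), u^t - u^{t-1}\rangle \le \tfrac{1}{2c}\|\delta^t\|_*^2 + \tfrac{c}{2}\|u^t-u^{t-1}\|^2$ with $c$ chosen so the residual quadratic term is absorbed, leaving a $\tfrac{2(M^2+\sigma^2)\lambda_t^2}{\mu+\eta+\beta_t-(\mathcal C+L)\lambda_t^2}$ contribution per step. Taking conditional expectation kills the linear-in-$\delta^t$ cross term (since $u^{t-1}$ is measurable with respect to the past, though one must be slightly careful: $\hat u^t$ depends on $u^{t-1}, \underline u^{t-1}$ only, so the expectation argument goes through against $u^{t-1}$ but the term $\langle \delta^t, u^t - u\rangle$ needs the usual trick of introducing an auxiliary prox-sequence, or one simply pairs $\delta^t$ with $u^{t-1}$ first and bounds $\langle\delta^t, u^t - u^{t-1}\rangle$ by Young).

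After the per-iteration bound, I would divide through by $\Lambda_t$, use the definition \eqref{def_xweight} of $\underline u^t = (1-\lambda_t)\underline u^{t-1} + \lambda_t u^t$ together with convexity of $\Phi$ to relate $\tfrac{1}{\Lambda_t}(\Phi(\underline u^t) - \Phi(u))$ to $\tfrac{1-\lambda_t}{\Lambda_t}(\Phi(\underline u^{t-1}) - \Phi(u)) + \tfrac{\lambda_t}{\Lambda_t}(\phi(u^t) + \cdots - \Phi(u))$, which telescopes cleanly because $\tfrac{\lambda_t}{\Lambda_t} \cdot \tfrac{1}{\lambda_t} = \tfrac{1}{\Lambda_t}$ and $\tfrac{1-\lambda_t}{\Lambda_t} = \tfrac{1}{\Lambda_{t-1}}$. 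The Bregman terms telescope thanks to \eqref{beta_Lambda}: the $\tfrac{\beta_t}{\Lambda_t}V(u^{t-1},u)$ term at step $t$ cancels against the $-\tfrac{\beta_{t-1}+\cdots}{\Lambda_{t-1}}V(u^{t-1},u)$-type term from step $t-1$, and the $\mu+\eta$ parts accumulate into the final $-(\Lambda_T\beta_1 + \mu + \eta)\bbe V(u^T,u)$ coefficient after multiplying back by $\Lambda_T$. Using $\lambda_1 = 1$ (so $\Lambda_1 = 1$ and the $t=1$ term starts the telescope with $u^0$) and summing the error terms gives $\Lambda_T \sum_{t=1}^T \tfrac{2(M^2+\sigma^2)\lambda_t^2}{(\mu+\eta+\beta_t - (\mathcal C+L)\lambda_t^2)\Lambda_t}$, matching \eqref{inner_recursion}. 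The main obstacle I expect is the careful bookkeeping of the $\hat u^t$ extrapolation \eqref{eqn:inner0} — verifying that the specific convex combination coefficients there are exactly what is needed so that, after invoking Lemma~\ref{lem:optimality}, the $V(\hat u^t, u)$ terms can be rewritten (via the three-point identity $V(\hat u^t, u) = V(u^{t-1}, u) - \langle\ldots\rangle + \ldots$, or rather via the identity relating $V(\hat u^t,\cdot)$, $V(u^{t-1},\cdot)$, $V(\underline u^{t-1},\cdot)$) into a form that telescopes; this is the analogue of the AC-SA estimate-sequence argument and is where the unusual-looking coefficients in \eqref{eqn:inner0} and the definition of $\hat u^t$ must be used precisely.
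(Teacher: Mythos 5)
Your plan is essentially to re-derive the AC-SA recursion from scratch for the function $\Phi$, and that is a viable route, but it is not what the paper does. The paper's proof is a two-line reduction: it first observes that, by \eqref{eqn:def_V_i_s} and \eqref{eqn:proxquad}, the Bregman remainder of the term $\eta V(x,\cdot)$ satisfies $\tfrac{1}{2}\|u_1-u_2\|^2 \le V(u_2,u_1) \le \tfrac{\mathcal{C}}{2}\|u_1-u_2\|^2$, so that $\Phi$ itself satisfies a condition of the form \eqref{eqn:nonsmooth} with constants $(\mu+\eta,\ \mathcal{C}+L,\ M)$; it then notes that the vector $w + G^t + \eta(\nabla \o(\hat u^t)-\nabla \o(x))$ used in \eqref{eqn:inner1} is an unbiased stochastic gradient of $\Phi$ at $\hat u^t$ with the same variance bound $\sigma^2$ (the added terms are deterministic), and invokes Theorem 1 of \cite{GhaLan10-1} verbatim to obtain \eqref{inner_recursion}. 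What the reduction buys is that all of the delicate machinery you describe — the three-point inequality from Lemma~\ref{lem:optimality}, the Young-inequality treatment of $M$ and $\delta^t$ giving the $\tfrac{2(M^2+\sigma^2)\lambda_t^2}{\mu+\eta+\beta_t-(\mathcal{C}+L)\lambda_t^2}$ per-step error, the zero-mean cross term against the deterministic $u$, and the $\Lambda_t$-weighted telescoping under \eqref{lam_1}--\eqref{beta_Lambda} — is inherited from the cited result rather than redone. What your route buys is a self-contained argument that does not lean on an external theorem, and your identification of those ingredients is accurate and consistent with how that external proof actually goes.

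The one place where your sketch stops short of a proof is exactly the step you flag: the bookkeeping of the extrapolation \eqref{eqn:inner0}. In the accelerated analysis the smoothness quadratic is incurred at $\|\underline u^t - \hat u^t\|$, not directly at $\|u^t-u^{t-1}\|$ as written in your sketch, and the specific coefficients in \eqref{eqn:inner0} are chosen precisely so that $\underline u^t - \hat u^t$ is a $\lambda_t$-scaled displacement that can be dominated, under \eqref{mu_L}, by the prox terms $\lambda_t(\mu+\eta)V(\hat u^t,u^t) + [(1-\lambda_t)(\mu+\eta)+\beta_t]V(u^{t-1},u^t)$ produced by Lemma~\ref{lem:optimality}. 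Acknowledging this as "the main obstacle" is honest, but it is the heart of the acceleration argument; without carrying it out (or, as the paper does, delegating it to \cite{GhaLan10-1} after verifying its hypotheses for $\Phi$), the proposal is a correct roadmap rather than a complete proof.
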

\begin{proof}
Note that in view of \eqref{eqn:def_V}, \eqref{eqn:def_V_i_s} and \eqref{eqn:proxquad}, we have
\[
  \tfrac{1}{2}\|u_1 - u_2\|^2 \le V(x,u_1) - V(x,u_2) - \la \nabla V(x, u_2), u_1-u_2\ra = V(u_2, u_1) \le \tfrac{\mathcal{C}}{2}\|u_1-u_2\|^2, \ \forall u_1, u_2\in U, 
\]
where $\nabla V(x, u_2)$ denotes the gradient of $V(x, \cdot)$ w.r.t. $u_2$ for a given $x$, and the above result together with \eqref{eqn:nonsmooth} imply $\phi(\cdot)$ satisfies 
\[
\tfrac{\mu+ \eta}{2}\|u_1-u_2\|^2 \le \Phi(u_1) - \Phi(u_2) - \la \nabla \Phi(u_2), u_1-u_2\ra \le \tfrac{\mathcal{C}+L}{2}\|u_1-u_2\|^2 + M\|u_1-u_2\|, \ \forall u_1,u_2 \in U.
\]
Hence, by the proof of Theorem 1 in \cite{GhaLan10-1}, we can conclude that 
\begin{align*}
\bbe_{[\xi]}\Phi(\underline u^T) - \Phi(u) \le \Lambda_T\beta_1 V(u^0,u) - (\Lambda_T\beta_1 + \mu+\eta)\bbe_{[\xi]}V(u^T,u) + \Lambda_t\tsum_{t=1}^T \tfrac{2\lambda_t^2(M^2+\sigma^2)}{\Lambda_t(\mu + \eta+\beta_t-(\mathcal{C}+L)\lambda_t^2)}.
\end{align*}
\end{proof}

We are now ready to present the main convergence property of the AA-SDCS method stated in Algorithm~\ref{alg:AASDCS} when the objective functions $f_i, i=1,\dots,m$, are general convex.
\begin{proposition}\label{prop_bndQ_aasdcs}
Let the iterates $(\underline \xb^k, \xb^{k})$ and $\hat\yb^k$, $k=1,\dots,N$, be generated by Algorithm~\ref{alg:AASDCS} and be defined as in \eqref{def_yhat}, respectively, and let $\bar \zb^N:=(\tsum_{k=0}^N\theta_k\underline \xb^k,\tsum_{k=0}^N\hat\theta_k\hat\yb^k)$. 
Assume that the objective $f_i, i = 1,\dots,m$, are general convex functions, i.e., $\mu=0, \ L,M\ge 0$ in \eqref{eqn:nonsmooth}. 
Let the parameters $\{\alpha_k\}$, $\{\tau_k\}$, and $\{\eta_k\}$ in Algorithm~\ref{alg:AASDCS} satisfy \eqref{theta_tau} and 
\begin{align}
\hat\theta_k\left(\tfrac{\mathcal{C} + L}{T_k(T_k+1)}+\eta_k\right)&\le \hat\theta_{k-1}\left(\tfrac{\mathcal C +L}{T_{k-1}(T_{k-1}+1)}+\eta_{k-1}\right), \ k=2,\dots,N, \label{theta_Tk_eta}\\
\alpha_k\hat\theta_k &=\hat\theta_{k-1}, \ k=2,\dots,N, \label{alpha_htheta}\\
8m\alpha_kd^2_{max}&\le \eta_{k-1}\tau_{k}, k =2, \dots,N,\label{alpha_d_eta_tau}\\
8(m-1)^2d^2_{max}&\le m\eta_{k}\tau_{k}, k = 1,\dots,N, \label{m_d_eta_tau}
\end{align}
where $\{\hat \theta_k\}$ is some given weight sequence.
Let the parameters $\{\lambda_t\}$ and $\{\beta_t\}$ in the \inner~procedure of Algorithm~\ref{alg:AASDCS} be set to \eqref{para_sgd1}.
Then, for any $\zb:=(\xb,\yb)\in X^m\times\bbr^{md}$, we have
\beq\label{bnd_Q_aasdcs}
\bbe\{Q(\bar\zb^N;\zb)\}\le \hat\theta_0(F(\xb^0)-F(\xb)) + m\hat\theta_1\left(\tfrac{4(\mathcal C+L)}{T_1(T_1+1)}+\eta_1\right)\mathbf{V}(\xb^0,\xb)+ \bbe\{\langle \vb,\yb\rangle\}+ \tsum_{k=1}^{N}\tfrac{8m(M^2+\sigma^2)\hat\theta_k}{\eta_k(T_k+1)},
\eeq
where $\bbe$ represents taking the expectation over all random variables, $Q$ is defined in \eqref{eqn:gap} and $\vb$ are defined as 
\beq\label{def_resi_aasdcs}
\vb:=\hat\theta_0 \Lb\xb^0+ \hat\theta_N\Lb(\hat{\underline\xb}^N-\xb^{N-1})+m\hat\theta_1\tau_1\yb^N.
\eeq
Furthermore, for any saddle point $(\xb^*,\yb^*)$ of \eqref{eqn:saddle}, we have
\begin{align}\label{bnd_xyn_aasdcs}
\tfrac{\hat\theta_N}{4}&\left(1-\tfrac{2\|\Lb\|^2}{m\eta_N\tau_N}\right)\max\left\{\eta_N\bbe\|\hat{\underline \xb}^N-\xb^{N-1}\|^2,2m\tau_N\bbe\|\yb^*-\yb^N\|^2\right\}\nn\\
&\le \hat\theta_0(F(\xb^0)-F(\xb^*) + \langle\Lb\xb^0,\yb^*\rangle) + m\hat\theta_1\left(\tfrac{4(\mathcal C+L)}{T_1(T_1+1)}+\eta_1\right)\mathbf{V}(\xb^0,\xb^*)\nn\\
&\quad + \tfrac{m\hat\theta_1\tau_1}{2}\|\yb^*\|^2 + \tsum_{k=1}^N\tfrac{8m(M^2+\sigma^2)\hat\theta_k}{(T_k+1)\eta_k}.
\end{align}
\end{proposition}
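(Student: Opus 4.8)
The plan is to mirror the analysis of A-DPD in Lemma~\ref{lem_recursion} and Proposition~\ref{prop_bndQ}, with the single essential change that the exact primal optimality condition \eqref{condi_subp} is replaced by the inexact guarantee of the \inner~procedure supplied by Proposition~\ref{prop:inner}. First I would establish a recursion analogous to \eqref{Q_rec}: starting from $\bar\zb^N=(\tsum_k\theta_k\underline\xb^k,\tsum_k\hat\theta_k\hat\yb^k)$, convexity of $F$ gives $Q(\bar\zb^N;\zb)\le\tsum_k\theta_k[F(\underline\xb^k)-F(\xb)+\la\Lb\underline\xb^k,\yb\ra]-\tsum_k\hat\theta_k\la\Lb\xb,\hat\yb^k\ra$; then I apply the identities of Lemma~\ref{lem_exp} (in particular the second one, for the $\underline\xb$-sequence, together with $\bbe\{\tilde\yb^k\}=\bbe\{\hat\yb^k\}$ and the relations for $\hat\xb^k_{+}$ and $\hat\yb^k$ listed after its proof) to re-express everything in terms of the ``hypothetical full-update'' quantities $\hat{\underline\xb}^k$, $\hat\xb^k_{+}$, $\hat\yb^k$. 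For the dual block I reuse Lemma~\ref{lem:optimality} applied to \eqref{def_yhat} exactly as in A-DPD; for the primal block I invoke Proposition~\ref{prop:inner} with $\mu=0$, $\phi=F$, $V=\mathbf V$, $w=\Lb\tilde\yb^k$, $\eta=\eta_k$, $T=T_k$, applied to the full-space subproblem \eqref{def_xhatu}.

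Next I would record precisely what Proposition~\ref{prop:inner} contributes under the choice \eqref{para_sgd1}. One checks that \eqref{para_sgd1} satisfies \eqref{lam_1}--\eqref{beta_Lambda}, that $\Lambda_t=2/(t(t+1))$, that $\Lambda_{T}\beta_1=4(\mathcal C+L)/(T(T+1))$, and --- using $\beta_t-(\mathcal C+L)\lambda_t^2=4(\mathcal C+L)/(t(t+1)^2)>0$ so that $\eta+\beta_t-(\mathcal C+L)\lambda_t^2>\eta$ --- that the residual sum in \eqref{inner_recursion} is at most $8(M^2+\sigma^2)/(\eta(T+1))$ per agent; aggregating the stochastic oracle over all $m$ agents (so that $M^2+\sigma^2$ is replaced by $m(M^2+\sigma^2)$ in the full-space problem) produces exactly the extra term $\tsum_k 8m(M^2+\sigma^2)\hat\theta_k/(\eta_k(T_k+1))$ in \eqref{bnd_Q_aasdcs}. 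The second effect of using the \inner~bound instead of an exact solve is that, after moving the $\eta_kV(u^0,u)$ part of $\Phi$ to the right and combining $\Lambda_{T_k}\beta_1\mathbf V(\xb^{k-1},\xb)-(\Lambda_{T_k}\beta_1+\eta_k)\mathbf V(\hat\xb^k_{+},\xb)$ via the $\hat\xb^k_{+}$-identity of Lemma~\ref{lem_exp}, the effective prox-potential coefficient becomes $m(\eta_k+4(\mathcal C+L)/(T_k(T_k+1)))$ rather than $m\eta_k$; this is precisely why conditions \eqref{theta_Tk_eta}, \eqref{alpha_htheta}, \eqref{alpha_d_eta_tau}--\eqref{m_d_eta_tau} replace the A-DPD conditions \eqref{theta_eta}, \eqref{alpha_theta}, \eqref{eta_tau_alpha1}--\eqref{eta_tau_alpha2}.

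With the recursion in hand, I would carry out the same telescoping as in the proof of Proposition~\ref{prop_bndQ}: use the modified extrapolation \eqref{eqn:algo1-acs} to rewrite $\la\Lb(m\underline\xb^k-(m-1)\underline\xb^{k-1}-\tilde\xb^k),\yb-\tilde\yb^k\ra$ so that the dual inner products telescope, leaving boundary terms at $k=1$ and $k=N$ plus residual cross terms of the form $\mathcal L_{i_k,j_{k-1}}\la x^{k-1}_{j_{k-1}}-x^{k-2}_{j_{k-1}},y^k_{i_k}-y^{k-1}_{i_k}\ra$; telescope the $\mathbf V$-terms via \eqref{theta_Tk_eta} and the $\|\yb-\cdot\|^2$-terms via \eqref{theta_tau}; and finally annihilate the cross terms with the elementary inequality $b\la u,v\ra-\tfrac a2\|v\|^2\le\tfrac{b^2}{2a}\|u\|^2$, the bound $|\mathcal L_{i,j}|\le d_{max}$, and conditions \eqref{alpha_d_eta_tau}--\eqref{m_d_eta_tau}. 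This yields \eqref{bnd_Q_aasdcs} with $\vb$ as in \eqref{def_resi_aasdcs}. For \eqref{bnd_xyn_aasdcs} I would then set $\zb=\zb^*=(\xb^*,\yb^*)$, use $Q(\bar\zb^N;\zb^*)\ge 0$ and $\yb^0=\0b$, and isolate the $\|\hat{\underline\xb}^N-\xb^{N-1}\|^2$ and $\|\yb^*-\yb^N\|^2$ contributions from the $k=N$ boundary terms, applying the same Young-type inequality to $\la\Lb(\hat{\underline\xb}^N-\xb^{N-1}),\yb^*-\yb^N\ra$ and a $\|\Lb\|$-bound; the factor $1-2\|\Lb\|^2/(m\eta_N\tau_N)$ is exactly the residual of this step.

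The step I expect to be the main obstacle is the bookkeeping between the two \inner~outputs --- $\hat\xb^k_{+}$ (the analogue of $u^T$, serving as the next prox center and entering the quadratic potential) and $\hat{\underline\xb}^k$ (the analogue of $\underline u^T$, the ergodic iterate entering $Q$) --- and the translation of these ``hypothetical full-update'' objects back to the genuinely node-updated sequences $\xb^k$ and $\underline\xb^k$ through the expectation identities. Unlike in A-DPD, the sequences $\xb^k$ and $\underline\xb^k$ differ, so the prox-decrease term $-\eta_k\mathbf V(\xb^{k-1},\hat{\underline\xb}^k)$ does not mesh with the primal sequence as transparently as $-V_{j_k}(x^{k-1}_{j_k},x^k_{j_k})$ does there; verifying that this term still absorbs the dual cross terms under the carefully tuned extrapolation \eqref{eqn:algo1-acs} and conditions \eqref{alpha_htheta}--\eqref{m_d_eta_tau} is the delicate core of the argument.
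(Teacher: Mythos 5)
Your proposal is correct and follows essentially the same route as the paper's proof: apply Proposition~\ref{prop:inner} (per agent, then aggregated) with the stepsizes \eqref{para_sgd1} to replace the exact optimality relation \eqref{condi_subp} by the inexact bound with error $\tfrac{8m(M^2+\sigma^2)}{\eta_k(T_k+1)}$ and prox coefficient $\eta_k+\tfrac{4(\mathcal C+L)}{T_k(T_k+1)}$, re-express via Lemma~\ref{lem_exp}, telescope using \eqref{eqn:algo1-acs}, \eqref{theta_tau}, \eqref{theta_Tk_eta}, \eqref{alpha_htheta}, kill the cross terms by Young's inequality with \eqref{alpha_d_eta_tau}--\eqref{m_d_eta_tau}, and then set $\zb=\zb^*$ to extract \eqref{bnd_xyn_aasdcs}. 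The delicate bookkeeping you flag (that the cross terms now involve the full vectors $\hat{\underline\xb}^{k-1}-\xb^{k-2}$, absorbed by $-\eta_k\mathbf V(\xb^{k-1},\hat{\underline\xb}^k)$ rather than by single-node prox terms) is handled in the paper exactly as you anticipate, which is why the constants in \eqref{alpha_d_eta_tau}--\eqref{m_d_eta_tau} carry the extra factor compared with \eqref{eta_tau_alpha1}--\eqref{eta_tau_alpha2}.
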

\begin{proof}
Since $f_i$'s are general convex function, we have $\mu = 0$ and $L,M\ge0$ (cf. \eqref{eqn:nonsmooth}). 
Also note that $\lambda_t$ and $\beta_t$ defined in \eqref{para_sgd1} satisfy condition \eqref{lam_1}-\eqref{beta_Lambda}.
Therefore, substituting $\phi := f_i$, and $\lambda_t$ and $\beta_t$, relation \eqref{inner_recursion} can be rewritten as the following,\footnote{We added the subscript $i$ to emphasize that this inequality holds for any agent $i \in \Vc$ with $\phi = f_i$. More specifically, $\Phi_i(u_i) := \la w_i, u_i\ra + f_i(u_i) + \eta V_i(x_i,u_i)$.}
  \begin{align*}
  \bbe_{[\xi]}\Phi_i(\underline u_i^T) - \Phi_i(u_i) 
&\le \Lambda_T \beta_1 V_i(u_i^0,u_i) - (\Lambda_T\beta_1+ \eta)\bbe_{\xi}V_i(u_i^T,u_i) + \Lambda_T\tsum_{t=1}^T\tfrac{2(M^2+\sigma^2)\lambda_t^2}{(\eta + \beta_t - (\mathcal{C}+L)\lambda_t^2)\Lambda_t},
  \end{align*}
  Summing up the above inequality from $i\in [m]$, and using the definitions of $\hat \xb^k_{+}$ and $\hat{\underline \xb}^k$ in \eqref{def_xhatu}, we obtain 
  \begin{align*}
  \bbe_{[\xi]}\Phi^k(\hat{\underline \xb}^k) - \Phi^k(\xb) 
&\le \Lambda_{T_k} \beta_1 \mathbf{V}(\xb^{k-1},\xb) - (\Lambda_{T_k}\beta_1+ \eta_k)\bbe_{\xi}\mathbf{V}(\xb_+^k,\xb) + \Lambda_{T_k}\tsum_{t=1}^{T_k}\tfrac{2m(M^2+\sigma^2)\lambda_t^2}{(\eta_k + \beta_t - (\mathcal{C}+L)\lambda_t^2)\Lambda_t},
  \end{align*}
  where $\Phi^k(\xb)= \la \Lb\xb, \tilde\yb^k\ra + F(\xb) + \eta_k\mathbf{V}(\xb^{k-1},\xb)$. By plugging into the above relation the values of $\lambda_t$ and $\beta_t$ in \eqref{para_sgd1}, together with the definition of $\Phi^k(\xb)$ and rearranging the terms, we have $\forall \xb\in X^m$
  \begin{align}\label{condi_sgd}
  \bbe_{[\xi]}\left\{\la \Lb(\hat {\underline \xb}^k-\xb), \tilde \yb^k\ra + F(\hat {\underline \xb}^k)- F(\xb)\right\}
  &\le \left(\tfrac{4(\mathcal{C}+L)}{T_k(T_k+1)}+\eta_k\right)\bbe_{[\xi]}\left[\mathbf{V}(\xb^{k-1},\xb) - \mathbf{V}(\hat\xb^k_{+},\xb)\right]\nn\\
  &\quad-\eta_k\bbe_{[\xi]}\{\mathbf{V}(\xb^{k-1},\hat {\underline\xb}^k)\}+\tfrac{8m(M^2+\sigma^2)}{(T_k+1)\eta_k}.
  \end{align}
  By the definitions of $Q$ in \eqref{eqn:gap} and $\bar \zb^N$, and the convexity of $F(\cdot)$, we have
  \begin{align*}
Q(\bar \zb^N;\zb)
&\le \tsum_{k=0}^N\theta_k[F(\underline \xb^k)-F(\xb)+\langle \Lb\underline \xb^k,\yb\rangle]
-\tsum_{k=0}^N\hat\theta_k\langle\Lb\xb,\hat \yb^k\rangle.
\end{align*}
Taking expectation over $i_1,j_1,\dots,i_k,j_k$ and applying Lemma~\ref{lem_exp}, we obtain
\begin{align*}
\bbe_{[i_k,j_k]}\left\{Q(\bar \zb^N;\zb)\right\}&\le\bbe_{[i_k,j_k]}\left\{\tsum_{k=0}^N\hat \theta_k[F(\hat{\underline\xb}^k)-F(\xb)+\langle\Lb\hat{\underline\xb}^k,\yb\rangle - \langle \Lb\xb,\hat\yb^k\rangle]\right\}.
\end{align*}
Moreover, if we replace \eqref{condi_subp} by \eqref{condi_sgd} in Lemma~\ref{lem_recursion}, we can conclude the following result similar to \eqref{raw_bnd_Q}
\begin{align*}
\bbe\{Q(\bar\zb^N;\zb)\}
&\le \hat\theta_0Q_0(\xb,\yb) + \tsum_{k=1}^N\tfrac{8m(M^2+\sigma^2)\hat\theta_k}{(T_k+1)\eta_k}\\
&\quad + \bbe\tsum_{k=1}^N\left\{\hat\theta_k\langle\Lb(\hat{\underline \xb}^k-\tilde\xb^k),\yb-\tilde\yb^k\rangle -\hat\theta_k\eta_k\mathbf{V}(\xb^{k-1},\hat{\underline \xb}^k)]\right\}\nn\\
&\quad + \bbe\left\{\tsum_{k=1}^Nm\hat\theta_k\left(\tfrac{4(\mathcal{C}+L)}{T_k(T_k+1)}+\eta_k\right)[\mathbf{V}(\xb^{k-1},\xb)-\mathbf{V}(\xb^k,\xb)]\right\}\nn\\
&\quad + \bbe\left\{\tsum_{k=1}^N\tfrac{m\hat\theta_k\tau_k}{2}[\|\yb-\yb^{k-1}\|^2-\|\yb-\yb^k\|^2-\|y_{i_k}^{k-1}-y_{i_k}^k\|^2]\right\},
\end{align*}
where $\bbe$ represents taking the expectation over all random variables.
Therefore, we have
\begin{align}\label{bnd_Q_aasdcs_con}
\bbe\{Q(\bar\zb^N;\zb)\}&\le \hat\theta_0Q_0(\xb,\yb) + \tsum_{k=1}^N\tfrac{8m(M^2+\sigma^2)\hat\theta_k}{(T_k+1)\eta_k} + \bbe\left\{\tsum_{k=1}^N\hat\theta_k\tilde \Delta_k\right\},
\end{align}
where 
\begin{align}\label{def_tDelta}
\tilde\Delta_k:= &\langle \Lb(\hat{\underline \xb}^k-\tilde\xb^k),\yb-\tilde\yb^{k}\rangle 
+ m\left(\tfrac{4(\mathcal{C}+L)}{T_k(T_k+1)}+\eta_k\right)[\mathbf{V}(\xb^{k-1},\xb)-\mathbf{V}(\xb^k,\xb)]\nn\\
&\quad -\eta_k\mathbf{V}(\xb^{k-1},\hat{\underline \xb}^k)
 + \tfrac{m\tau_k}{2}[\|\yb-\yb^{k-1}\|^2-\|\yb-\yb^k\|^2-\|y_{i_k}^{k-1}-y_{i_k}^k\|^2].
\end{align}
We now provide a bound for $\bbe\{\tsum_{k=1}^N\hat\theta_k\tilde \Delta_k\}$. Observe that $\tilde \Delta_k$ is different from $\Delta_k$ defined in \eqref{def_Delta} in first three terms, however, they can be bounded via the same technique. Note that by \eqref{eqn:algo1-acs}, we obtain   
\begin{align*}
\bbe&\left\{\tsum_{k=1}^N\hat\theta_k\langle \Lb(\hat{\underline\xb}^k-\tilde \xb^k),\yb-\tilde\yb^k\rangle\right\}\\
&= \bbe\left\{\tsum_{k=1}^N\hat\theta_k\langle \Lb((\hat{\underline\xb}^k-\xb^{k-1})-\alpha_k(m\underline\xb^{k-1}-(m-1)\underline \xb^{k-2}-\xb^{k-2}),\yb-\tilde \yb^k\rangle\right\}\\
&= \bbe\left\{\tsum_{k=1}^N\left[\hat\theta_k\langle\Lb(\hat{\underline \xb}^k-\xb^{k-1}),\yb-\tilde\yb^k\rangle - \hat\theta_k\alpha_k\langle \Lb(\hat{\underline\xb}^{k-1}-\xb^{k-2}),\yb-\tilde\yb^{k-1}\rangle\right]\right\}\\
&\quad+ \bbe\left\{\tsum_{k=1}^N\hat\theta_k\alpha_k\langle \Lb(
\hat{\underline\xb}^{k-1}-\xb^{k-2}),\tilde\yb^k-\tilde \yb^{k-1})\rangle\right\}\\
&\varstackrel{\eqref{alpha_htheta},\eqref{eqn:algo4-acs}}{=}\hat\theta_N\langle \Lb(\hat{\underline \xb}^N-\xb^{N-1}),\yb-\yb^N-(m-1)(\yb^N-\yb^{N-1})\rangle\\
&\quad + \tsum_{k=2}^N\hat\theta_k\alpha_k\langle \Lb(\hat{\underline \xb}^{k-1}-\xb^{k-2}),m(\yb^{k}-\yb^{k-1})-(m-1)(\yb^{k-1}-\yb^{k-2})\rangle\\
&\varstackrel{\eqref{alpha_htheta}}{=}\hat\theta_N\langle \Lb(\hat{\underline \xb}^N-\xb^{N-1}),\yb-\yb^N\rangle + \tsum_{k=2}^Nm\hat\theta_k\alpha_k\langle\Lb(\hat{\underline \xb}^{k-1}-\xb^{k-2}),\yb^{k}-\yb^{k-1}\rangle\\
&\quad -\tsum_{k=1}^N(m-1)\hat\theta_{k+1}\alpha_{k+1}\langle\Lb(\hat{\underline \xb}^{k}-\xb^{k-1}),\yb^{k}-\yb^{k-1}\rangle,
\end{align*}
which together with \eqref{theta_Tk_eta} and \eqref{theta_tau} imply that
\begin{align*}
\bbe\{\tsum_{k=1}^N\hat\theta_k\tilde \Delta_k\}
&\le \bbe\left\{\hat\theta_N\langle \Lb(\hat{\underline \xb}^N-\xb^{N-1}),\yb-\yb^N\rangle + \tsum_{k=2}^Nm\hat\theta_k\alpha_k\langle\Lb(\hat{\underline \xb}^{k-1}-\xb^{k-2}),\yb^{k}-\yb^{k-1}\rangle\right\}\\
&\quad -\bbe\left\{\tsum_{k=1}^N(m-1)\hat\theta_{k+1}\alpha_{k+1}\langle\Lb(\hat{\underline \xb}^{k}-\xb^{k-1}),\yb^{k}-\yb^{k-1}\rangle + \tsum_{k=1}^N\hat\theta_k\eta_k\mathbf{V}(\xb^{k-1},\hat{\underline \xb}^k)\right\}\\
&\quad+ \bbe\left\{m\hat\theta_1\left(\tfrac{4(\mathcal C+L)}{T_1(T_1+1)}+\eta_1\right)\mathbf{V}(\xb^0,\xb)-m\hat\theta_N\left(\tfrac{4(\mathcal C+L)}{T_N(T_N+1)}+\eta_N\right)\mathbf{V}(\xb^N,\xb)\right\} \\
&\quad + \bbe\left\{\tfrac{m\hat\theta_1\tau_1}{2}\|\yb-\yb^0\|^2 -\tfrac{m\hat\theta_N\tau_N}{2}\|\yb-\yb^N\|^2-\tsum_{k=1}^N\tfrac{m\hat\theta_k\tau_k}{2}\|y_{i_k}^{k-1}-y_{i_k}^k\|^2\right\}\\
&\le \bbe\left\{\hat\theta_N\la \Lb(\hat{\underline \xb}^N-\xb^{N-1}), \yb - \yb^N\ra -\tfrac{\hat\theta_N\eta_N}{4}\|\xb^{N-1}-\hat{\underline \xb}^N\|^2-\tsum_{k=1}^N\tfrac{m\hat\theta_k\tau_k}{2}\|y_{i_k}^{k-1}-y_{i_k}^k\|^2\right\}\\
&\quad + \tsum_{k=2}^N\bbe\left\{m\hat\theta_k\alpha_k\la \Lb(\hat{\underline \xb}^{k-1}-\xb^{k-2}), \yb^{k}-\yb^{k-1}\ra -\tfrac{\hat\theta_{k-1}\eta_{k-1}}{4}\|\hat{\underline \xb}^{k-1} - \xb^{k-2}\|^2\right\}\\
&\quad + \tsum_{k=1}^N\bbe\left\{(m-1)\hat\theta_{k+1}\alpha_{k+1}\la \Lb(\hat{\underline \xb}^{k}- \xb^{k-1}),\yb^k-\yb^{k-1}\ra - \tfrac{\hat\theta_k\eta_k}{4}\|\xb^{k-1}-\hat{\underline \xb}^{k}\|^2\right\}\\
&\quad + m\hat\theta_1\left(\tfrac{4(\mathcal C+L)}{T_1(T_1+1)}+\eta_1\right)\mathbf{V}(\xb^0,\xb)+ \tfrac{m\hat\theta_1\tau_1}{2}\bbe\{\|\yb-\yb^0\|^2 -\|\yb-\yb^N\|^2\}.
\end{align*}
Noting that by the fact that $b\langle u,v\rangle - a\|v\|^2/2 \le b^2\|u\|^2/(2a), \forall a>0$ and \eqref{alpha_htheta} and \eqref{alpha_d_eta_tau}, for all $k\ge 2$, we have 
\begin{align*}
&\tsum_{j=1}^m\left\{m\hat\theta_k\alpha_k {\cal L}_{i_k,j}\langle \hat{\underline x}_j^{k-1}-x_j^{k-2},y_{i_k}^{k}-y_{i_k}^{k-1}\rangle -\tfrac{\hat\theta_{k-1}\eta_{k-1}}{4}\|\hat{\underline x}_j^{k-1}-x_j^{k-2}\|^2\right\}- \tfrac{m\hat\theta_k\tau_k}{4}\|y_{i_k}^{k-1}-y_{i_k}^k\|^2\\
&\le m\left(\tsum_{j=1}^m\tfrac{m\hat\theta_k^2\alpha_k^2{\cal L}_{i_k,j}^2}{\hat\theta_{k-1}\eta_{k-1}}-\tfrac{\hat\theta_k\tau_k}{4}\right)\|y_{i_k}^{k-1}-y_{i_k}^k\|^2\le 0.
\end{align*}
Similarly, by \eqref{m_d_eta_tau} for all $k\ge 1$, we have
\[
  \tsum_{j=1}^m\left\{(m-1)\hat\theta_{k+1}\alpha_{k+1}{\cal L}_{i_k,j}\la\hat{\underline x}_j^{k} - x_j^{k-1},y_{i_k}^k-y_{i_k}^{k-1}\ra-\tfrac{\hat\theta_k\eta_k}{4}\|x_j^{k-1}-\hat{\underline x}_j^{k}\|^2\right\}-\tfrac{m\hat\theta_k\tau_k}{4}\|y_{i_k}^{k-1}-y_{i_k}^k\|^2\le 0.
\]
Hence, in view of the above three results, we obtain
\begin{align}\label{rel5}
\bbe\left\{\tsum_{k=1}^N\hat\theta_k\tilde \Delta_k\right\}
&\le \hat\theta_N\bbe\{\langle \Lb(\hat{\underline\xb}^N-\xb^{N-1}),\yb-\yb^N\rangle\} - \tfrac{\hat\theta_N\eta_N}{4}\bbe\{\|\hat{\underline\xb}^N-\xb^{N-1}\|^2\}\nn \\
&\quad + m\hat\theta_1\left(\tfrac{4(\mathcal C+L)}{T_1(T_1+1)}+\eta_1\right)\mathbf{V}(\xb^0,\xb)+ \tfrac{m\hat\theta_1\tau_1}{2}\bbe\{\|\yb-\yb^0\|^2 -\|\yb-\yb^N\|^2\}.
\end{align}
Following the same procedure as we used in Proposition~\ref{prop_bndQ} (cf. \eqref{Q_standard}), and using the above result and \eqref{bnd_Q_aasdcs_con}, we can conclude that 
\begin{align*}
\bbe\left\{Q(\bar \zb^N,\zb)\right\}
&\le \hat\theta_0(F(\xb^0)-F(\xb)) + m\hat\theta_1\left(\tfrac{4(\mathcal C+L)}{T_1(T_1+1)}+\eta_1\right)\mathbf{V}(\xb^0,\xb)+ \tsum_{k=1}^N\tfrac{8m(M^2+\sigma^2)\hat\theta_k}{(T_k+1)\eta_k}
\nn\\
&\quad + \bbe\left\{\langle \hat\theta_0 \Lb\xb^0+ \hat\theta_N\Lb(\hat{\underline\xb}^N-\xb^{N-1})+m\hat\theta_1\tau_1\yb^N,\yb\rangle\right\},
\end{align*}
which implies the result in \eqref{bnd_Q_aasdcs}. Furthermore, from \eqref{bnd_Q_aasdcs_con}, \eqref{rel5}, \eqref{theta_tau}, and the fact that $Q(\bar \zb^N,\zb^*)\ge 0, \yb^0=\0b$, 
we have
\begin{align*}
0\le \bbe\{Q(\bar\zb^N,\zb^*)\}&\le \hat\theta_0Q_0(\xb^*,\yb^*) + m\hat\theta_1\left(\tfrac{4(\mathcal C+L)}{T_1(T_1+1)}+\eta_1\right)\mathbf{V}(\xb^0,\xb^*) + \tfrac{m\hat\theta_1\tau_1}{2}\|\yb^*\|^2\\
&\quad+\bbe\left\{\hat\theta_N\la \Lb(\hat{\underline \xb}^N-\xb^{N-1}),\yb^*-\yb^N\ra - \tfrac{\hat\theta_N\eta_N}{4}\|\hat{\underline \xb}^{N-1}-\xb^N\|^2\right\}\\
&\quad -\bbe\left\{\tfrac{m\hat\theta_N\tau_N}{2}\|\yb^*-\yb^N\|^2\right\} + \tsum_{k=1}^N\tfrac{8m(M^2+\sigma^2)\hat\theta_k}{(T_k+1)\eta_k},
\end{align*} 
which together with the fact that $b\langle u,v\rangle - a\|v\|^2/2 \le b^2\|u\|^2/(2a), \forall a>0$ and \eqref{def_Q0} imply that
\begin{align}\label{bnd_huxN}
\tfrac{\hat\theta_N\eta_N}{4}\bbe\|\hat{\underline \xb}^{N-1}-\xb^N\|^2
&\le \hat\theta_0(F(\xb^0)-F(\xb^*) + \langle\Lb\xb^0,\yb^*\rangle) + m\hat\theta_1\left(\tfrac{4(\mathcal C+L)}{T_1(T_1+1)}+\eta_1\right)\mathbf{V}(\xb^0,\xb^*)\nn\\ 
&\quad + \tfrac{m\hat\theta_1\tau_1}{2}\|\yb^*\|^2+ \bbe\left\{\tfrac{\hat\theta_N\|\Lb\|^2}{2m\tau_N}\|\hat{\underline \xb}^{N-1}-\xb^{N}\|^2\right\} + \tsum_{k=1}^N\tfrac{8m(M^2+\sigma^2)\hat\theta_k}{(T_k+1)\eta_k}.
\end{align}
Similarly, we can obtain
\begin{align*}
\tfrac{m\hat\theta_N\tau_N}{2}\bbe\|\yb^*-\yb^N\|^2 
&\le \hat\theta_0(F(\xb^0)-F(\xb^*) + \langle\Lb\xb^0,\yb^*\rangle) + m\hat\theta_1\left(\tfrac{4(\mathcal C+L)}{T_1(T_1+1)}+\eta_1\right)\mathbf{V}(\xb^0,\xb^*)\\
&\quad + \tfrac{m\hat\theta_1\tau_1}{2}\|\yb^*\|^2
 + \bbe\{\tfrac{m\hat\theta_N\|\Lb\|^2}{\eta_N}\|\yb^*-\yb^N\|^2\} + \tsum_{k=1}^N\tfrac{8m(M^2+\sigma^2)\hat\theta_k}{(T_k+1)\eta_k},
\end{align*}
which implies the result in \eqref{bnd_xyn_aasdcs}.
\end{proof}

{\bf Proof of Theorem~\ref{main_aasdcs}} Let us set $\{\hat\theta_k\}$ as \eqref{def_htheta}
Therefore, it is easy to check that parameter settings \eqref{para_sgd1} and \eqref{para_aasdcs} satisfies conditions \eqref{lam_1} - \eqref{beta_Lambda}, \eqref{theta_tau}, and \eqref{theta_Tk_eta} - \eqref{m_d_eta_tau}. Also note that by \eqref{def_xweight}, $\{\theta_k\}$ is given by \eqref{def_theta},
which implies that $\bar\xb^N=\tfrac{1}{N+m}(\tsum_{k=0}^{N-1}\underline \xb^k+m\underline \xb^N)$.
By plugging the parameter setting in \eqref{bnd_Q_aasdcs}, we have
\begin{align}\label{bnd_Qv2}
\bbe\{Q(\bar\zb^N;\xb^*,\yb)\}&\le \tfrac{m}{N+m}\left[F(\xb^0)-F(\xb^*) + 8md_{max}\mathbf{V}(\xb^0,\xb^*) + \tfrac{2\mathcal{D}}{m}\right]
+ \bbe\{\langle \vb,\yb\rangle\}.
\end{align} 
Observe that from \eqref{def_resi_aasdcs} and \eqref{para_aasdcs}
\begin{align*}
\bbe\{\|\vb\|\}&\le \tfrac{m}{N+m}\bbe\left[\|\Lb\xb^0\| + \tfrac{\|\Lb\|}{m}\|\hat{\underline \xb}^N-\xb^{N-1}\|+2d_{max}(\|\yb^*-\yb^N\| + \|\yb^*\|)\right].
\end{align*}
By \eqref{bnd_xyn_aasdcs}, \eqref{para_aasdcs}, and Jensen's inequality, we have
\begin{align*}
(\bbe\{\|\hat{\underline \xb}^N-\xb^{N-1}\|\})^2&\le \bbe\{\|\hat{\underline \xb}^N-\xb^{N-1}\|^2\}\\
&\le\tfrac{2(F(\xb^0)-F(\xb^*)+ \la \Lb\xb^0,\yb^*\ra)}{d_{max}} + 16m\mathbf{V}(\xb^0,\xb^*) + 2\|\yb^*\|^2+ \tfrac{4\mathcal{D}}{md_{max}},\\
(\bbe\{\|\yb^*-\yb^N\|\})^2&\le \bbe\{\|\yb^*-\yb^N\|^2\}\\
&\le \tfrac{2(F(\xb^0)-F(\xb^*)+\la \Lb\xb^0,\yb^*\ra)}{d_{max}} + 16m\mathbf{V}(\xb^0,\xb^*) + 2\|\yb^*\|^2+ \tfrac{4\mathcal{D}}{md_{max}}.
\end{align*}
Hence, in view of the above three inequalities, we obtain
\begin{align*}
\bbe\{\|\vb\|\}&\le \tfrac{m}{N+m}\Big\{\|\Lb\xb^0\|+ 2d_{max}\|\yb^*\|\\
&\quad + 3d_{max}\sqrt{\tfrac{2(F(\xb^0)-F(\xb^*)+\la \Lb\xb^0,\yb^*\ra)}{d_{max}} + 16m\mathbf{V}(\xb^0,\xb^*) + 2\|\yb^*\|^2+ \tfrac{4\mathcal{D}}{md_{max}}} \Big\}\\
&= {\cal O}\left\{\tfrac{m}{N+m}\left[\|\Lb\xb^0\|+ d_{max}\|\yb^*\| + d_{max}\sqrt{\tfrac{F(\xb^0)-F(\xb^*)+\la \Lb\xb^0,\yb^*\ra}{d_{max}} + m\mathbf{V}(\xb^0,\xb^*)+ \tfrac{\mathcal{D}}{md_{max}}}\right]\right\}.
\end{align*}
Furthermore, by \eqref{bnd_Qv2} we have
\begin{align*}
  \bbe\{g(\vb,\bar\zb^N)\}\le \tfrac{m}{N+m}\left[F(\xb^0)-F(\xb^*) +  8md_{max}\mathbf{V}(\xb^0,\xb^*) + \tfrac{2\mathcal{D}}{m}\right].
\end{align*}
The results in \eqref{bnd_aasdcs} 
immediately follow from applying Proposition~\ref{prop:approx} to the above two inequalities.
\hfill $\square$

In the following proposition, we provide the main convergence property of the AA-SDCS method stated in Algorithm~\ref{alg:AASDCS} when the objective functions $f_i, i=1,\dots,m$, are strongly convex.
\begin{proposition}\label{prop_bndQ_aasdcs_s}
Let the iterates $(\underline \xb^k, \xb^{k})$ and $\hat\yb^k$, $k=1,\dots,N$, be generated by Algorithm~\ref{alg:AASDCS} and be defined as in \eqref{def_yhat}, respectively, and let $\bar \zb^N:=(\tsum_{k=0}^N\theta_k\underline \xb^k,\tsum_{k=0}^N\hat\theta_k\hat\yb^k)$. 
Assume that the objective $f_i, i = 1,\dots,m$, are strongly convex functions, i.e., $\mu>0, \ L,M\ge 0$ in \eqref{eqn:nonsmooth}. 
Let the parameters $\{\alpha_k\}$, $\{\tau_k\}$, and $\{\eta_k\}$ in Algorithm~\ref{alg:AASDCS} satisfy \eqref{theta_tau}, \eqref{alpha_htheta} - \eqref{m_d_eta_tau}, 
\begin{align}
\hat\theta_k\left(\tfrac{\mathcal{C} + L}{T_k(T_k+1)}+\eta_k\right)&\le \hat\theta_{k-1}\left(\tfrac{\mathcal C +L}{T_{k-1}(T_{k-1}+1)}+\eta_{k-1}+\mu\right), \ k=2,\dots,N, \label{theta_Tk_eta_s}
\end{align}
where $\{\hat \theta_k\}$ is some given weight sequence.
Let the parameters $\{\lambda_t\}$ and $\{\beta_t\}$ in the \inner~procedure of Algorithm~\ref{alg:AASDCS} be set to \eqref{para_sgd1}.
Then, for any $\zb:=(\xb,\yb)\in X^m\times\bbr^{md}$, we have
\beq\label{bnd_Q_aasdcs_s}
\bbe\{Q(\bar\zb^N;\zb)\}\le \hat\theta_0(F(\xb^0)-F(\xb)) + m\hat\theta_1\left(\tfrac{4(\mathcal C+L)}{T_1(T_1+1)}+\eta_1\right)\mathbf{V}(\xb^0,\xb)+ \bbe\{\langle \vb,\yb\rangle\}+ \tsum_{k=1}^{N}\tfrac{8m(M^2+\sigma^2)\hat\theta_k}{(\eta_k+\mu)(T_k+1)},
\eeq
where $\bbe$ represents the taking expectation over all random variables, $Q$ and $\vb$ are defined in \eqref{eqn:gap} and \eqref{def_resi_aasdcs} 
respectively. Furthermore, for any saddle point $(\xb^*,\yb^*)$ of \eqref{eqn:saddle}, we have
\begin{align}\label{bnd_xyn_aasdcs_s}
\tfrac{\hat\theta_N}{4}&\left(1-\tfrac{2\|\Lb\|^2}{m\eta_N\tau_N}\right)\max\left\{\eta_N\bbe\|\hat{\underline \xb}^N-\xb^{N-1}\|^2,2m\tau_N\bbe\|\yb^*-\yb^N\|^2\right\}\nn\\
&\le \hat\theta_0(F(\xb^0)-F(\xb^*) + \langle\Lb\xb^0,\yb^*\rangle) + m\hat\theta_1\left(\tfrac{4(\mathcal C+L)}{T_1(T_1+1)}+\eta_1\right)\mathbf{V}(\xb^0,\xb^*)\nn\\
&\quad + \tfrac{m\hat\theta_1\tau_1}{2}\|\yb^*\|^2 + \tsum_{k=1}^N\tfrac{8m(M^2+\sigma^2)\hat\theta_k}{(T_k+1)(\eta_k+\mu)}.
\end{align}
\end{proposition}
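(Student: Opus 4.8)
The plan is to mimic the structure of the proof of Proposition~\ref{prop_bndQ_aasdcs} but to track carefully the extra strong-convexity contribution $\tfrac{\mu}{2}\|\cdot\|^2$ that arises from the inner loop, since now $\mu>0$ in \eqref{eqn:nonsmooth}. First I would invoke Proposition~\ref{prop:inner} with $\phi:=f_i$ and the parameters $\lambda_t,\beta_t$ from \eqref{para_sgd1}: one checks \eqref{lam_1}--\eqref{beta_Lambda} hold, and the key point is that the coefficient of the ``contraction'' term on the right-hand side is now $(\Lambda_{T_k}\beta_1+\mu+\eta_k)$ rather than $(\Lambda_{T_k}\beta_1+\eta_k)$, while the noise term picks up a factor $1/(\mu+\eta_k+\beta_t-(\mathcal{C}+L)\lambda_t^2)$. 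Plugging in $\lambda_t=2/(t+1)$, $\beta_t=4(\mathcal{C}+L)/(t(t+1))$ and summing over $i\in[m]$, the analogue of \eqref{condi_sgd} becomes
\begin{align*}
\bbe_{[\xi]}\left\{\la \Lb(\hat{\underline \xb}^k-\xb),\tilde\yb^k\ra + F(\hat{\underline \xb}^k)-F(\xb)\right\}
&\le \left(\tfrac{4(\mathcal C+L)}{T_k(T_k+1)}+\eta_k\right)\bbe_{[\xi]}\mathbf{V}(\xb^{k-1},\xb)\\
&\quad -\left(\tfrac{4(\mathcal C+L)}{T_k(T_k+1)}+\eta_k+\mu\right)\bbe_{[\xi]}\mathbf{V}(\hat\xb_+^k,\xb)\\
&\quad -\eta_k\bbe_{[\xi]}\mathbf{V}(\xb^{k-1},\hat{\underline\xb}^k)+\tfrac{8m(M^2+\sigma^2)}{(T_k+1)(\eta_k+\mu)},
\end{align*}
so the per-iteration prox coefficient for the \emph{new} iterate gains the extra $+\mu$, which is precisely what \eqref{theta_Tk_eta_s} is designed to absorb when telescoping.

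Next I would repeat the argument of Lemma~\ref{lem_recursion} and the first part of Proposition~\ref{prop_bndQ_aasdcs}: apply Lemma~\ref{lem_exp} to pass from $\{\underline\xb^k\}$ to $\{\hat{\underline\xb}^k\}$, substitute the modified inner-loop bound above in place of \eqref{condi_subp}, and apply Lemma~\ref{lem:optimality} to \eqref{def_yhat} for the dual step. The cross term $\sum_k\hat\theta_k\la\Lb(\hat{\underline\xb}^k-\tilde\xb^k),\yb-\tilde\yb^k\ra$ is handled exactly as in Proposition~\ref{prop_bndQ_aasdcs} using \eqref{eqn:algo1-acs}, \eqref{alpha_htheta} and \eqref{eqn:algo4-acs}; the telescoping of the $\mathbf{V}(\xb^{k-1},\xb)$ terms now uses \eqref{theta_Tk_eta_s} instead of \eqref{theta_Tk_eta}, with the $+\mu$ exactly cancelling the coefficient mismatch; and the $\|\yb-\yb^{k-1}\|^2$ terms telescope via \eqref{theta_tau}. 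The conditions \eqref{alpha_d_eta_tau}, \eqref{m_d_eta_tau} are then used to drive the two families of Young's-inequality cross terms (those coupling $\|\hat{\underline x}_j^{k-1}-x_j^{k-2}\|^2$ or $\|\hat{\underline x}_j^{k}-x_j^{k-1}\|^2$ with $\|y_{i_k}^{k-1}-y_{i_k}^k\|^2$) to be nonpositive, exactly as in the displays following \eqref{rel5}. This yields the analogue of \eqref{rel5} with the noise term $\sum_k 8m(M^2+\sigma^2)\hat\theta_k/((T_k+1)(\eta_k+\mu))$, and then the ``$Q_0$ plus residual'' rearrangement gives \eqref{bnd_Q_aasdcs_s}.

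For the second claim \eqref{bnd_xyn_aasdcs_s}, I would use $Q(\bar\zb^N,\zb^*)\ge 0$ together with the recursion just established at $\zb=\zb^*$, $\yb^0=\0b$, as in the final block of Proposition~\ref{prop_bndQ_aasdcs}: after bounding the remaining $\la\Lb(\hat{\underline\xb}^N-\xb^{N-1}),\yb^*-\yb^N\ra$ term by Young's inequality against either $\tfrac{\hat\theta_N\eta_N}{4}\|\hat{\underline\xb}^N-\xb^{N-1}\|^2$ or $\tfrac{m\hat\theta_N\tau_N}{2}\|\yb^*-\yb^N\|^2$ (using $\|\Lb(\hat{\underline\xb}^N-\xb^{N-1})\|\le\|\Lb\|\,\|\hat{\underline\xb}^N-\xb^{N-1}\|$), one obtains two inequalities each of the form ``LHS coefficient times the squared quantity $\le$ RHS $+$ (fraction of LHS)''; rearranging and taking the max gives \eqref{bnd_xyn_aasdcs_s} with the factor $\big(1-\tfrac{2\|\Lb\|^2}{m\eta_N\tau_N}\big)$. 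The main obstacle I anticipate is the bookkeeping in the inner-loop step: one must verify that the proof of Theorem~1 in \cite{GhaLan10-1} indeed produces the coefficient $(\Lambda_{T_k}\beta_1+\mu+\eta_k)$ on the terminal prox term (so that the $+\mu$ surplus is available for telescoping) and that the $\eta_k$ defined in \eqref{para_aasdcs_s}, which subtracts $(\mathcal C+L)/(T_k(T_k+1))$, still keeps $\mu+\eta_k+\beta_t-(\mathcal C+L)\lambda_t^2>0$ for all $t$ so that Proposition~\ref{prop:inner} applies; everything downstream is a faithful re-run of the convex case with the single substitution $\eta_k\rightsquigarrow\eta_k+\mu$ in the noise denominators and the prox-telescoping condition.
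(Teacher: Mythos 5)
Your proposal is correct and follows essentially the same route as the paper: the paper likewise re-derives the inner-loop bound from Proposition~\ref{prop:inner} with $\mu>0$ (yielding exactly your display, which is \eqref{condi_sgd_s}), substitutes it for \eqref{condi_sgd} in the argument of Proposition~\ref{prop_bndQ_aasdcs}, telescopes the prox terms via \eqref{theta_Tk_eta_s} in place of \eqref{theta_Tk_eta}, and closes with the same $Q(\bar\zb^N,\zb^*)\ge 0$ plus Young's-inequality step for \eqref{bnd_xyn_aasdcs_s}. Your anticipated concern about positivity of $\mu+\eta_k+\beta_t-(\mathcal C+L)\lambda_t^2$ is harmless at the level of this proposition, since $\beta_t>(\mathcal C+L)\lambda_t^2$ already holds for the choice \eqref{para_sgd1} (as $\mathcal C\ge 1$), so only $\eta_k\ge 0$ is needed here.
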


\begin{proof}
Since $f_i$'s are strongly convex function, we have $\mu > 0$ and $L,M\ge0$ (cf. \eqref{eqn:nonsmooth}). Observe that $\lambda_t$ and $\beta_t$ defined in \eqref{para_sgd1} satisfy conditions \eqref{lam_1}-\eqref{beta_Lambda}. Therefore, following similar procedure in Proposition~\ref{prop_bndQ_aasdcs}, in view of Proposition~\ref{prop:inner}, and
the definition of $\hat \xb^k_{+}$ and $\hat{\underline \xb}^k$ in \eqref{def_xhatu}, we can obtain 
  \begin{align*}
  \bbe_{[\xi]}\Phi^k(\hat{\underline \xb}^k) - \Phi^k(\xb) 
&\le \Lambda_{T_k} \beta_1 \mathbf{V}(\xb^{k-1},\xb) - (\Lambda_{T_k}\beta_1+ \mu +\eta_k)\bbe_{\xi}\mathbf{V}(\xb_+^k,\xb) + \Lambda_{T_k}\tsum_{t=1}^{T_k}\tfrac{2m(M^2+\sigma^2)\lambda_t^2}{(\mu + \eta_k + \beta_t - (\mathcal{C}+L)\lambda_t^2)\Lambda_t},
  \end{align*}
  where $\Phi^k(\xb)= \la \Lb\xb, \tilde\yb^k\ra + F(\xb) + \eta_k\mathbf{V}(\xb^{k-1},\xb)$. By plugging into the above relation the values of $\lambda_t$ and $\beta_t$ in \eqref{para_sgd1}, together with the definition of $\Phi^k(\xb)$ and rearranging the terms, we have $\forall \xb\in X^m$
  \begin{align}\label{condi_sgd_s}
  \bbe_{[\xi]}&\left\{\la \Lb(\hat {\underline \xb}^k-\xb), \tilde \yb^k\ra + F(\hat {\underline \xb}^k)- F(\xb)\right\}
  \le \left(\tfrac{4(\mathcal{C}+L)}{T_k(T_k+1)}+\eta_k\right)\bbe_{[\xi]}\{\mathbf{V}(\xb^{k-1},\xb)\}\nn\\
  &\quad - \left(\tfrac{4(\mathcal{C}+L)}{T_k(T_k+1)}+\eta_k + \mu\right)\bbe_{[\xi]}\{\mathbf{V}(\hat\xb^k_{+},\xb)\}
  -\eta_k\bbe_{[\xi]}\{\mathbf{V}(\xb^{k-1},\hat {\underline\xb}^k)\}+\tfrac{8m(M^2+\sigma^2)}{(T_k+1)(\eta_k+\mu)}.
  \end{align}
Observe that if we replace \eqref{condi_sgd} by \eqref{condi_sgd_s} in Proposition~\ref{prop_bndQ_aasdcs}, we can conclude the following result similar to \eqref{bnd_Q_aasdcs_con}
\begin{align}\label{bnd_Q_aasdcs_con_s}
\bbe\{Q(\bar\zb^N;\zb)\}&\le \hat\theta_0Q_0(\xb,\yb) + \tsum_{k=1}^N\tfrac{8m(M^2+\sigma^2)\hat\theta_k}{(T_k+1)(\eta_k+\mu)} + \bbe\left\{\tsum_{k=1}^N\hat\theta_k\bar \Delta_k\right\},
\end{align}
where $\bbe$ represents taking the expectation over all random variables and 
\begin{align}\label{def_bDelta}
\bar\Delta_k:= &\langle \Lb(\hat{\underline \xb}^k-\tilde\xb^k),\yb-\tilde\yb^{k}\rangle 
+ m\left[\left(\tfrac{4(\mathcal{C}+L)}{T_k(T_k+1)}+\eta_k\right)\mathbf{V}(\xb^{k-1},\xb)
-\left(\tfrac{4(\mathcal{C}+L)}{T_k(T_k+1)}+\eta_k+\mu\right)\mathbf{V}(\xb^k,\xb)\right]\nn\\
&\quad -\eta_k\mathbf{V}(\xb^{k-1},\hat{\underline \xb}^k)
 + \tfrac{m\tau_k}{2}[\|\yb-\yb^{k-1}\|^2-\|\yb-\yb^k\|^2-\|y_{i_k}^{k-1}-y_{i_k}^k\|^2].
\end{align}
Since $\bar \Delta_k$ defined above shares a similar structure with $\tilde \Delta_k$ in \eqref{def_tDelta}, we can follow a similar procedure as in Proposition~\ref{prop_bndQ_aasdcs} to obtain a bound for $\bbe\{Q(\bar\zb^N,\zb)\}$. Note that the only difference between \eqref{def_bDelta} and \eqref{def_tDelta} exists in the coefficient of the term $\mathbf{V}(\xb^{k-1},\xb)$ and $\mathbf{V}(\xb^k,\xb)$. Hence, by using condition \eqref{theta_Tk_eta_s} in place of \eqref{theta_Tk_eta}, we obtain 
\begin{align*}
\bbe\left\{Q(\bar \zb^N,\zb)\right\}
&\le \hat\theta_0(F(\xb^0)-F(\xb)) + m\hat\theta_1\left(\tfrac{4(\mathcal C+L)}{T_1(T_1+1)}+\eta_1\right)\mathbf{V}(\xb^0,\xb)+ \tsum_{k=1}^N\tfrac{8m(M^2+\sigma^2)\hat\theta_k}{(T_k+1)(\eta_k+\mu)}
\nn\\
&\quad + \bbe\left\{\langle \hat\theta_0 \Lb\xb^0+ \hat\theta_N\Lb(\hat{\underline\xb}^N-\xb^{N-1})+m\hat\theta_1\tau_1\yb^N,\yb\rangle\right\}.
\end{align*}
Our result in \eqref{bnd_Q_aasdcs_s} immediately follows. Following the same procedure as we obtain \eqref{bnd_huxN}, for any saddle point $\zb^*=(\xb^*,\yb^*)$ of \eqref{eqn:saddle}, we have
\begin{align*}
\tfrac{\hat\theta_N\eta_N}{4}\bbe\|\hat{\underline \xb}^{N-1}-\xb^N\|^2
&\le \hat\theta_0(F(\xb^0)-F(\xb^*) + \langle\Lb\xb^0,\yb^*\rangle) + m\hat\theta_1\left(\tfrac{4(\mathcal C+L)}{T_1(T_1+1)}+\eta_1\right)\mathbf{V}(\xb^0,\xb^*)\nn\\ 
&\quad + \tfrac{m\hat\theta_1\tau_1}{2}\|\yb^*\|^2+ \bbe\left\{\tfrac{\hat\theta_N\|\Lb\|^2}{2m\tau_N}\|\hat{\underline \xb}^{N-1}-\xb^{N}\|^2\right\} + \tsum_{k=1}^N\tfrac{8m(M^2+\sigma^2)\hat\theta_k}{(T_k+1)(\eta_k+\mu)},\\
\tfrac{m\hat\theta_N\tau_N}{2}\bbe\|\yb^*-\yb^N\|^2 
&\le \hat\theta_0(F(\xb^0)-F(\xb^*) + \langle\Lb\xb^0,\yb^*\rangle) + m\hat\theta_1\left(\tfrac{4(\mathcal C+L)}{T_1(T_1+1)}+\eta_1\right)\mathbf{V}(\xb^0,\xb^*)\\
&\quad + \tfrac{m\hat\theta_1\tau_1}{2}\|\yb^*\|^2
 + \bbe\{\tfrac{m\hat\theta_N\|\Lb\|^2}{\eta_N}\|\yb^*-\yb^N\|^2\} + \tsum_{k=1}^N\tfrac{8m(M^2+\sigma^2)\hat\theta_k}{(T_k+1)(\eta_k+\mu)},
\end{align*}
from which the result in \eqref{bnd_xyn_aasdcs_s} follows.
\end{proof}

{\bf Proof of Theorem~\ref{main_aasdcs_s}} Let us set 
\beq\label{def_htheta_s}
\hat\theta_k = 
\begin{cases}
\tfrac{6m^2}{6m^2+ N(N+6m + 1)}, \ &k = 0,\\
\tfrac{2(k+3m)}{6m^2+N(N+6m+1)}, \ &k = 1,\dots,N.
\end{cases}
\eeq 
Observe from \eqref{para_aasdcs_s} that
\[
\eta_k = \tfrac{(k+3m-1)\mu}{2} - \tfrac{\mathcal C + L}{T_k(T_k+1)}\ge \tfrac{(k+3m-1)\mu}{2} - \tfrac{(k+3m-3)\mu}{4} = \tfrac{(k+3m + 1)\mu}{4}.
\]
Therefore, it is easy to check that parameter settings \eqref{para_sgd1} and \eqref{para_aasdcs_s} satisfy conditions \eqref{lam_1} - \eqref{beta_Lambda}, \eqref{theta_tau}, \eqref{alpha_htheta} - \eqref{m_d_eta_tau}, and \eqref{theta_Tk_eta_s}. Also by \eqref{def_xweight}, we have
\[
\theta_k=
  \begin{cases}
  \tfrac{2(k+ 2m+1)}{6m^2 + N(N+6m+1)}, \ &k=1,\dots,N-1,\\
  \tfrac{2m(N+3m)}{6m^2+N(N+6m+1)}, \ &k=N,
  \end{cases}
\]
which implies that $\bar\xb^N=\tfrac{2}{6m^2+N(N+6m+1)}(\tsum_{k=0}^{N-1}(k+2m+1)\underline \xb^k+ m(N+3m)\underline \xb^N)$. By plugging the parameter setting in \eqref{bnd_Q_aasdcs_s}, we have
\begin{align}\label{bnd_Qv3}
\bbe\{Q(\bar\zb^N;\xb^*,\yb)\}&\le \tfrac{6m^2}{6m^2+N(N+6m+1)}\left[F(\xb^0)-F(\xb^*) + \tfrac{(3m+1)\mu}{2}\mathbf{V}(\xb^0,\xb^*) + \tfrac{\mathcal{D}\mu}{6m^2}\right]
+ \bbe\{\langle \vb,\yb\rangle\}.
\end{align} 
Observe that from \eqref{def_resi_aasdcs} and \eqref{para_aasdcs_s}
\begin{align*}
\bbe\{\|\vb\|\}&\le \tfrac{2m^2}{6m^2+N(N+6m+1)}\bbe\left[3\|\Lb\xb^0\| + \tfrac{(N+3m)\|\Lb\|}{m^2}\|\hat{\underline \xb}^N-\xb^{N-1}\|+\tfrac{32d_{max}^2}{\mu}(\|\yb^*-\yb^N\| + \|\yb^*\|)\right].
\end{align*}
In view of \eqref{bnd_xyn_aasdcs_s} and \eqref{para_aasdcs_s}, we have
\begin{align*}
\bbe\{\|\hat{\underline \xb}^N-\xb^{N-1}\|^2\}
&\le\tfrac{8}{\hat\theta_N\eta_N}\tfrac{6m^2}{6m^2+N(N+6m+1)}\left[F(\xb^0)-F(\xb^*) + \la \Lb\xb^0,\yb^*\ra + \tfrac{(3m+1)\mu}{2}\mathbf{V}(\xb^0,\xb^*)\right.\\
&\quad \left. + \tfrac{32d_{max}^2}{3\mu}\|\yb^*\|^2 + \tfrac{\mathcal{D}\mu}{6m^2}\right]\\
&\le \tfrac{96m^2}{(N+3m)(N+3m+1)}\left[\tfrac{F(\xb^0)-F(\xb^*) + \la \Lb\xb^0,\yb^*\ra}{\mu} + \tfrac{(3m+1)}{2}\mathbf{V}(\xb^0,\xb^*)\right.\\
&\quad \left. + \tfrac{32d_{max}^2}{3\mu^2}\|\yb^*\|^2 + \tfrac{\mathcal{D}}{6m^2}\right],\\
\bbe\{\|\yb^*-\yb^N\|^2\}&\le\tfrac{3\mu^2}{4d_{max}^2}\left[\tfrac{F(\xb^0)-F(\xb^*) + \la \Lb\xb^0,\yb^*\ra}{\mu} + \tfrac{(3m+1)}{2}\mathbf{V}(\xb^0,\xb^*)+ \tfrac{32d_{max}^2}{3\mu^2}\|\yb^*\|^2 + \tfrac{\mathcal{D}}{6m^2} \right].
\end{align*}
Hence, in view of the above three inequalities and Jensen's inequality, we obtain
\begin{align*}
\bbe\{\|\vb\|\}&\le \tfrac{2m^2}{6m^2+N(N+6m+1)}\left\{3\|\Lb\xb^0\|+\tfrac{32d_{max}^2}{\mu}\|\yb^*\|\right.\\
&\quad \left. + 24d_{max}\sqrt{\tfrac{3(F(\xb^0)-F(\xb^*) + \la \Lb\xb^0,\yb^*\ra)}{\mu} + \tfrac{3(3m+1)}{2}\mathbf{V}(\xb^0,\xb^*) + \tfrac{32d_{max}^2}{\mu^2}\|\yb^*\|^2 + \tfrac{\mathcal{D}}{2m^2}}\right\}\\
&= {\cal O}\left\{\tfrac{m^2}{m^2+N^2}\left[\|\Lb\xb^0\|+\tfrac{d_{max}^2}{\mu}\|\yb^*\| + d_{max}\sqrt{\tfrac{(F(\xb^0)-F(\xb^*) + \la \Lb\xb^0,\yb^*\ra)}{\mu} + m\mathbf{V}(\xb^0,\xb^*) + \tfrac{\mathcal{D}}{m^2}}\right]\right\}.
\end{align*}

Furthermore, by \eqref{bnd_Qv3} we have
\begin{align*}
  \bbe\{g(\vb,\bar\zb^N)\}\le \tfrac{6m^2}{6m^2+N(N+6m+1)}\left[F(\xb^0)-F(\xb^*) + \tfrac{(3m+1)\mu}{2}\mathbf{V}(\xb^0,\xb^*) + \tfrac{\mathcal{D}\mu}{6m^2}\right].
\end{align*}
The results in \eqref{bnd_aasdcs_s} immediately follow from applying Proposition~\ref{prop:approx} to the above two inequalities.
\hfill $\square$

\end{document}